\tikzset{axis/.style={&lt;-&gt;}}
\newcommand\reallywidehat[1]{%
\savestack{\tmpbox}{\stretchto{%
  \scaleto{%
    \scalerel*[\widthof{\ensuremath{#1}}]{\kern-.6pt\bigwedge\kern-.6pt}%
    {\rule[-\textheight/2]{1ex}{\textheight}}
  }{\textheight}%
}{0.5ex}}%
\stackon[1pt]{#1}{\tmpbox}%
}
 \definecolor{MyBlue}{rgb}{0.05, 0.25, 0.65}
 \definecolor{MyRed}{rgb}{0.90, 0.05, 0.05}
\definecolor{MyGreen}{rgb}{0.05, 0.90, 0.05}
\newcommand{\B}{\boldsymbol}
\newcommand{\C}[1]{\mathcal{#1}}
\newcommand{\D}[1]{\mathbb{#1}}
\newtheorem{theorem}{Theorem}[section]
\newtheorem{proposition}[theorem]{Proposition}
\newtheorem{lemma}[theorem]{Lemma}
\newtheorem{remark}[theorem]{Remark}
\newtheorem{definition}[theorem]{Definition}
\newcommand{\Nat}{{\mathbb N}}
\newcommand{\Real}{{\mathbb R}}
\newcommand{\BR}{{\Bic(\Real)}}
\newcommand{\BX}{{\Bic(X)}}
\newcommand{\id}{\mathrm{id}}
\newcommand{\Const}{\mathrm{Const}}
\newcommand{\Ap}{\mathrm{Ap}}
\newcommand{\Bic}{\mathrm{Bic}}
\newcommand{\ev}{\mathrm{ev}}
\newcommand{\Mor}{\mathrm{Mor}}
\newcommand{\Con}{\mathrm{Con}}
\newcommand{\Top}{\mathrm{\mathbf{Top}}}
\newcommand{\Cat}{\mathrm{\mathbf{Cat}}}
\newcommand{\Id}{\mathrm{Id}}
\newcommand{\BS}{\mathrm{BS}}
\newcommand{\Bis}{\mathrm{\mathbf{Bis}}}
\newcommand{\TOT}{\Leftrightarrow}
\newcommand{\To}{\Rightarrow}
\newcommand{\Too}{\Longrightarrow}
\newcommand{\sto}{\rightsquigarrow}
\newcommand{\pr}{\textnormal{\texttt{pr}}}
\newcommand{\BST}{\mathrm{BST}}
\newcommand{\Disj}{\B \rrbracket \B \llbracket}
\newcommand{\Set}{\mathrm{\mathbf{Set}}}
\newcommand{\Setc}{\Set^{\#}}
\newcommand{\Pred}{\mathrm{\mathbf{Pred}}}
\newcommand{\Predc}{\Pred^{\#}}
\newcommand{\se}{\mathrm{se}}
\newcommand{\eto}{\hookrightarrow}
\newcommand{\Dm}{\textnormal{\texttt{Dom}}}
\newcommand{\Chu}{\textnormal{\textbf{Chu}}}
\newcommand{\CSMC}{\mathrm{csms}}
\newcommand{\op}{\mathrm{op}}
\newcommand{\Fun}{\textnormal{\textbf{Fun}}}
\newcommand{\cnt}{\mathrm{cnt}}
\newcommand{\Inf}{\textnormal{\textbf{Inf}}}
\newcommand{\crTop}{\textnormal{\textbf{crTop}}}
\newcommand{\fin}{\textnormal{\texttt{fin}}}
\newcommand{\Aff}{\textnormal{\textbf{Aff}}}
\newcommand{\Hom}{\mathrm{Hom}}
\newcommand{\Sub}{\textnormal{\textbf{Sub}}}
\newcommand{\Groth}{\textnormal{\textbf{Groth}}}
\newcommand{\ccCat}{\textnormal{\textbf{ccCat}}}
\newcommand{\Ob}{\mathrm{Ob}}
\newcommand{\Rel}{\textnormal{\textbf{Rel}}}
\newcommand{\CCC}{\mathrm{ccc}}
\newcommand{\End}{\mathrm{End}}
\newcommand{\CHU}{\textnormal{\textbf{CHU}}}
\newcommand{\CAT}{\textnormal{\textbf{CAT}}}
\begin{document}

\date{}

\title{\textbf{Chu representations of categories related to constructive mathematics}}

\author{Iosif Petrakis\\	
Mathematics Institute, Ludwig-Maximilians-Universit\"{a}t M\"{u}nchen\\
petrakis@math.lmu.de}  

%





\maketitle

\begin{abstract}
\noindent 
If $\C C$ is a closed symmetric monoidal category, the Chu category $\Chu(\C C, \gamma)$ over 
$\C C$ and an object $\gamma$ of it was defined by Chu in~\cite{Ba79}, as a $*$-autonomous category generated from $\C C$. In~\cite{Bi67} Bishop introduced the category $\B {\C P}^{\Disj}(X)$ of complemented subsets of a set $X$, in order to overcome 
the problems generated by the use of negation in constructive measure theory. In~\cite{Sh18} Shulman mentions
that Bishop's complemented subsets correspond roughly to the Chu construction. In this paper we explain
this correspondence by showing that there is a Chu representation (a full embedding) of $\B {\C P}^{\Disj}(X)$ into 
$\Chu(\Set, X \times X)$. A Chu representation of the category of Bishop spaces into $\Chu(\Set, \Real)$ is shown, as 
the constructive analogue to the standard Chu representation of the category of topological spaces into $\Chu(\Set, \D 2)$.
In order to represent the category of predicates (with objects pairs $(X, A)$, where $A$ is a subset of $X$, and the category
of complemented predicates (with objects pairs $(X, A)$, where $A$ is a complemented subset of $X$, we generalise the 
Chu construction by defining the Chu category over a cartesian closed category $\C C$ and an endofunctor on $\C C$.
Finally, we introduce the antiparallel Grothendieck construction over a product category and a contravariant $\Set$-valued functor on it of which the Chu construction is a special case, in case $\C C$ is a locally small, cartesian closed category.\\[2mm]
\textit{Keywords}: category theory, Chu construction, Grothendieck construction,
constructive mathematics, complemented subsets, Bishop spaces. 
\end{abstract}


\section{Introduction}
\label{sec: intro}

In category theory the Chu construction is a method of generating a $*$-autonomous category from a closed symmetric 
monoidal category (CSMC) (see~\cite{Ri16} and~\cite{Ma17}). The $*$-autonomous categories provide models for classical (multiplicative) linear logic (in~\cite{Se89}). The Chu construction was introduced by Chu in his Master's Thesis,
and appeared first in~\cite{Ba79}. The category $\Chu(\Set, X)$ was introduced by Lafont and Streicher in~\cite{LS91} under the name of games (semantics for  linear logic). In a series of papers, see e.g.,~\cite{Pr99}, Pratt and his collaborators applied the Chu construction on topics of  theoretical computer science (e.g., concurrency). The Chu construction has been applied to hardware verification, game theory, fuzzy systems, and the foundations of quantum mechanics (see~\cite{Ab12} and~\cite{Ab18}). 
There is a rich representation theory connected to the Chu construction, as many important, and quite different categories are
 represented (fully embedded) into some Chu category. The representation of categories related to constructive mathematics into some Chu category is a major theme of this paper.
 
In~\cite{Sh18}, p.~8, Shulman mentions that ``a notion corresponding
roughly to the Chu construction was already introduced by (Bishop and Bridges in)~\cite{BB85}
under the name complemented subset''. Actually, the notion of a complemented subset is already introduced by 
Bishop in~\cite{Bi67}, pp.~66-69, under the name \textit{complemented set}. 
Here we explain in what sense Bishop's notion corresponds to the Chu construction. Namely, there is a Chu representation of 
the thin category $\C P^{\Disj}(X)$ of complemented subsets of a set $X$ into $\Chu(\Set, X \times X)$. Notice that Bishop's motivation for introducing complemented (sub)sets is rooted
to his need to overcome problems generated by the use of negation in basic set and measure theory in a constructive setting (see~\cite{Pe20}, chapter 7, and~\cite{Pe19a}). Hence, the connection described here between the Chu construction and Bishop's notion of complemented subsets seems to be accidental.

All notions and results presented here concern cartesian closed categories $(\CCC)$, although they can be generalised to symmetric monoidal closed categories\footnote{A cartesian closed category $\C C$ is a $\CSMC$ where its tensor product of $\C C$ is its product and the tensor-unit is the terminal object of $\C C$.
The category $\Rel(\Set)$ with objects sets and morhisms relations $R \subseteq X \times Y$ is a
$\CSMC$ that is not a $\CCC$.}.
%
We structure this paper as follows:

\begin{itemize}
\item In section~\ref{sec: chu1} we present the basic of the Chu construction and the local Chu functor. 
\item In section~\ref{sec: globalchu} we present the global Chu functor that corresponds to the Chu construction.
\item In section~\ref{sec: boolechurepr} we present the standard and classical boolean Chu representation of $\Top$ and the induced boolean Chu representation of the category of information systems $\Inf$.
\item In section~\ref{sec: normalchurepr} we present the constructive normal Chu representation of the category of Bishop spaces $\Bis$. This representation of $\Bis$ is the constructive analogue of the aforementioned Chu representation of $\Top$. The notion of a Bishop space is Bishop's constructive, function-theoretic alternative to the classical, set-based notion of a topological space (see~\cite{Pe15}-\cite{Pe19b} and~\cite{Pe20b}-\cite{Pe21e}).
\item In section~\ref{sec: chusubsets} and~\ref{sec: chucompl} we give the Chu representation of the category $\B {\C P}(X)$ of subsets of a set $X$ and of the category $\B {\C P}^{\Disj}(X)$ of complemented subsets of $X$, where $X$ is a set equipped with an equality $=_X$ and an inequality $\neq_X$, respectively. All set-theoretic notions mentioned here are within our reconstruction $\BST$ of Bishop's set theory found in~\cite{Bi67} and~\cite{BB85} (see~\cite{Pe19d} and, especially,~\cite{Pe20}).
\item In section~\ref{sec: chu2} we introduce the generalised Chu category over a $\CCC$ $\C C$ and an endofunctor $\Gamma$ on $\C C$.
\item In section~\ref{sec: genchufunctor} we define the generalised global Chu functor that corresponds to the generalised Chu construction.
\item With the help of the generalised Chu construction we provide a generalised Chu representation of the categories of predicates $\Pred$ and of complemented predicates $\Pred^{\neq}$ in sections~\ref{sec: predchu} and~\ref{sec: complpredchu}, respectively.
\item In section~\ref{sec: chugroth} we introduce the antiparallel Grothendieck construction over a product category and a contravariant $\Set$-valued functor on it, which has the Chu construction as a special case, in case $\C C$ is a $\CCC$.
\end{itemize}

For all notions and results from category theory that are used here without explanation or proof we refer to~\cite{MM92},~\cite{Aw10} and~\cite{Ri16}.

\section{The Chu construction over a $\CCC$ $\C C$}
\label{sec: chu1}

Unless otherwise stated, throughout this paper $\C C, \C D, \C E$ are $\CCC$ and $\gamma \in C_0, \delta \in D_0$ are object of $\C C$ and $\C D$, respectively. To show that the Chu construction in Definition~\ref{def: chu} is category, one uses the fact the product 
$\times \colon \C C \times \C C \to \C C$ 
is a bifunctor (i.e., a functor). Moreover, if 
$f \colon a \to a{'}$ and $g \colon b \to b{'}$ in $C_1$, then $f \times g \colon a \times b \to a{'} \times b{'}$,
such that $1_a \times 1_b = 1_{a \times b}$, and if $f{'} \colon a{'} \to a{''}$ and $g{'} \colon b{'} \to b{''}$ in $C_1$, then 
\begin{equation}\label{eq: cmc1}
(f{'} \times g{'}) \circ (f \times g) = (f{'} \circ f) \times (g{'} \circ g).
\end{equation}
%
%
%
If $a{'} = a{''} = a$ and $f{'} = f = 1_a$, by equation~(\ref{eq: cmc1}) we get
\begin{equation}\label{eq: cmc2}
(1_a \times g{'}) \circ (1_a \times g) = (1_a \circ 1_a) \times (g{'} \circ g) = 1_a \times (g{'} \circ g).
\end{equation}
%
%
%
Similarly, if $b{'} = b{''} = b$ and $g{'} = g = 1_b$, by equation~(\ref{eq: cmc1}) we get
\begin{equation}\label{eq: cmc4}
(f{'} \times 1_b) \circ (f \times 1_b) =  (f{'} \circ f) \times (1_b \circ 1_b) = (f{'} \circ f) \times 1_b. 
\end{equation}
If $a, c, d, j \in C_0$, $\phi \colon a \to c$ and $\theta \colon j \to d \in C_1$, then 
\begin{equation}\label{eq: cmc3}
(\phi \times 1_d) \circ (1_a \times \theta) = (1_c \times \theta) \circ (\phi \times 1_j) 
\end{equation}
\begin{center}
\begin{tikzpicture}

\node (E) at (0,0) {$\mathsmaller{a \times j} $};
\node[right=of E] (S) {};
\node[right=of S] (T) {$\mathsmaller{a \times d}$};
\node[below=of T] (F) {$\mathsmaller{c \times d}$};
\node[below=of E] (G) {$\mathsmaller{c \times j}$};

\draw[->] (E)--(T) node [midway,above] {$\mathsmaller{1_a \times \theta}$};
\draw[->] (T)--(F) node [midway,right] {$\mathsmaller{\phi \times 1_d}$};
\draw[->] (E)--(G) node [midway,left] {$\mathsmaller{\phi \times 1_j}$};
\draw[->] (G)--(F) node [midway,below] {$\mathsmaller{1_c \times \theta}$};

\end{tikzpicture}
\end{center}
\begin{align*}
 (1_c \times \theta) \circ (\phi \times 1_j) & \stackrel{(\ref{eq: cmc1})} = (1_c \circ \phi) \times (\theta \circ 1_j)\\
 & = \phi \times \theta\\
 & = (\phi \circ 1_a) \times (1_d \circ \theta)\\
 & \stackrel{(\ref{eq: cmc1})} = (\phi \times 1_d) \circ (1_a \times \theta).
\end{align*}

%
%
%

\begin{definition}[The Chu construction over a $\CCC$ $\C C$ and some $\gamma \in C_0$]\label{def: chu}
The Chu category $\Chu(\C C, \gamma)$ 
over $\C C$ and $\gamma$ has objects Chu spaces i.e., triplets $(a, f, x)$, with $a, x \in C_0$ and 
$f \colon a \times x \to \gamma \in C_1$.
A morphism $\phi \colon (a, f, x) \to 
(b, g, y)$ in  $\Chu(\C C, \gamma)$, or a Chu transform,
is a pair $\phi = \big(\phi^+, \phi^-\big)$, where $\phi^+ \colon a \to b$ and $\phi^- 
\colon y \to x$ are in $C_1$ such that the following diagram commutes
\begin{center}
\begin{tikzpicture}

\node (E) at (0,0) {$\mathsmaller{a \times y}$};
\node[right=of E] (T) {};
\node[right=of T] (F) {$\mathsmaller{a \times x}$};
\node[below=of E] (A) {$\mathsmaller{b \times y}$};
\node[below=of F] (S) {$\mathlarger{\gamma}.$};

\draw[->] (E)--(A) node [midway,left] {$\mathsmaller{\phi^+ \times 1_y}$};
\draw[->] (F)--(S) node [midway,right] {$\mathsmaller{f}$};
\draw[->] (E)--(F) node [midway,above] {$\mathsmaller{1_a \times \phi^-}$};
\draw[->] (A)--(S) node [midway,below] {$\mathsmaller{g}$};

\end{tikzpicture}
\end{center}
If $\theta = \big(\theta^+, \theta^-\big) \colon (b, g, y) \to (c, h, z)$, then $\theta \circ \phi = 
\big(\theta^+ \circ \phi^+, \phi^- \circ \theta^-\big)$. Moreover, $1_{(a, f, x)} = (1_a, 1_x)$.

\end{definition}

%
%
%
%
%
%

If $\C C$ is bicomplete (complete and cocomplete), then $\Chu(\C C, \gamma)$ is also bicomplete (see~\cite{Ma17}, p.~41.
The following result is standard (see also~\cite{Ab12}, p.~712).

\begin{proposition}[The local Chu functor]\label{prp: intfunctor}
The rule $\Chu^{\C C} \colon \C C \to \Cat$, defined by
$$\Chu_0^{\C C}(\gamma) = \Chu(\C C, \gamma),$$
$$\Chu_1^{\C C}(u \colon \gamma \to \delta) = u_* \colon \Chu(\C C, \gamma) \to \Chu(\C C, \delta),$$
$$(u_*)_0(a, f, b) = (a, u \circ f, b),$$
\begin{center}
\begin{tikzpicture}

\node (E) at (0,0) {$a \times b$};
\node[right=of E] (T) {$\gamma$};
\node[right=of T] (F) {$\delta$};

\draw[MyBlue,right hook->] (E)--(T) node [midway,above] {$f$};
\draw[MyBlue,right hook->] (T)--(F) node [midway,above] {$u$};

\end{tikzpicture}
\end{center}
$$(u_*)_1\big(\phi^+, \phi^-\big) = \big(\phi^+, \phi^-\big),$$
is a functor.
Moreover, if $u$ is a monomorphism, then $u_*$ is a full embedding. 
\end{proposition}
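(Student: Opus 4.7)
The plan has three parts: show that for each $u \colon \gamma \to \delta$ the map $u_*$ is a functor, then show that $\Chu^{\C C}$ itself respects identities and composition in $\Cat$, and finally establish the full embedding property under the monomorphism hypothesis.

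For the first part, I would start by checking that $(u_*)_1$ is well defined on morphisms. Given a Chu transform $\phi = (\phi^+,\phi^-) \colon (a,f,b) \to (c,g,d)$, the defining square $g \circ (\phi^+ \times 1_b) = f \circ (1_a \times \phi^-)$ holds in $\C C$. Postcomposing with $u \colon \gamma \to \delta$ gives $(u \circ g) \circ (\phi^+ \times 1_b) = (u \circ f) \circ (1_a \times \phi^-)$, which is precisely the Chu-transform condition between $(u_*)_0(a,f,b) = (a, u \circ f, b)$ and $(u_*)_0(c,g,d) = (c, u \circ g, d)$. Since $(u_*)_1$ leaves the underlying pair $(\phi^+,\phi^-)$ unchanged, preservation of identities and of composition in $\Chu(\C C,\gamma)$ is transferred verbatim to $\Chu(\C C,\delta)$.

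For the second part, I would verify functoriality of $\Chu^{\C C}$ objectwise. For identities, $(1_\gamma)_*(a,f,b) = (a, 1_\gamma \circ f, b) = (a,f,b)$ and $(1_\gamma)_*$ is the identity on morphism-pairs, so $(1_\gamma)_* = 1_{\Chu(\C C,\gamma)}$. For composition, given $u \colon \gamma \to \delta$ and $v \colon \delta \to \epsilon$, associativity of composition in $\C C$ yields
\[ (v \circ u)_*(a,f,b) = (a, (v \circ u) \circ f, b) = (a, v \circ (u \circ f), b) = v_*(u_*(a,f,b)), \]
and on morphisms both sides act trivially on the pair. Hence $(v \circ u)_* = v_* \circ u_*$.

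For the third part, suppose $u$ is a monomorphism. Faithfulness of $u_*$ is immediate, because $(u_*)_1$ acts as the identity on pairs $(\phi^+, \phi^-)$. For fullness, let $\psi = (\psi^+, \psi^-) \colon (a, u \circ f, b) \to (c, u \circ g, d)$ be any morphism in $\Chu(\C C, \delta)$. Its defining square reads $u \circ \bigl(g \circ (\psi^+ \times 1_b)\bigr) = u \circ \bigl(f \circ (1_a \times \psi^-)\bigr)$, and left-cancelling $u$ gives $g \circ (\psi^+ \times 1_b) = f \circ (1_a \times \psi^-)$, so $\psi$ is the image under $(u_*)_1$ of a Chu transform $(a,f,b) \to (c,g,d)$. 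Finally, $(u_*)_0$ is injective on objects: if $(a, u \circ f, b) = (c, u \circ g, d)$, then $a = c$, $b = d$ and $u \circ f = u \circ g$, whence left-cancellation of $u$ yields $f = g$. The only place where monomorphicity is used is in these last two cancellations; everything else is formal, so this is the step I expect to be the actual content of the proof.
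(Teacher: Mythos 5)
Your proof is correct and is exactly the standard argument; the paper itself omits the proof of this proposition, merely noting that the result is standard, and every step you give (postcomposition with $u$ preserves the Chu-transform square, functoriality is trivial on the unchanged pairs, and monomorphicity of $u$ is used precisely for left-cancellation in the fullness and object-injectivity steps) is what that standard proof consists of. One small notational slip: for a Chu transform $(\phi^+,\phi^-)\colon(a,f,b)\to(c,g,d)$ the commuting square lives over $a\times d$, so the identity appearing next to $\phi^+$ should be $1_d$ rather than $1_b$; this does not affect the validity of your argument.
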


Let $\Set$ be the $\CCC$ of sets and functions in Bishop's sense\footnote{One could have considered some other constructive approach to set theory, like Aczel's constructive set theory in ~\cite{AR10}. Most of the results presented here hold also for sets in a classical sense.}. If $(A, f, B)$ and $(C, g, D)$ are Chu spaces in  
$\Chu(\Set, X)$, for some given set $X$, and if $(\phi^+, \phi^-) \colon (A, f, B) \to (C, g, D)$, then  
the commutativity of the rectangle
\begin{center}
\begin{tikzpicture}

\node (E) at (0,0) {$A \times D$};
\node[right=of E] (T) {};
\node[right=of T] (F) {$A \times B$};
\node[below=of E] (A) {$C \times D$};
\node[below=of F] (S) {$X$};

\draw[->] (E)--(A) node [midway,left] {$\mathsmaller{\phi^+ \times \id_{D}}$};
\draw[->] (F)--(S) node [midway,right] {$f$};
\draw[->] (E)--(F) node [midway,above] {$\mathsmaller{\id_{A} \times \phi^-}$};
\draw[->] (A)--(S) node [midway,below] {$g$};

\end{tikzpicture}
\end{center}
is written as $f\big(a, \phi^-(d)\big) = g\big(\phi^+(a), d\big)$, for every $a \in A$ and $d \in D$.
In the next two definitions we follow~\cite{Pr99} and~\cite{GT07}, 
respectively.

\begin{definition}\label{def: extsep}
 A Chu space $(A, f, B)$ in $\Chu(\Set, X)$ is called separable, if $\widehat{f} \colon A \to (B \to X)$, where
 $$\big[\widehat{f}(a)\big](b) = f(a, b),$$
 for every $a \in A$ and $b \in B$, is an injection. A Chu space $(A, f, B)$ in $\Chu(\Set, X)$ is called extensional,
 if $\widecheck{f} \colon B \to (A \to X)$, where
 $$\big[\widecheck{f}(b)\big](a) = f(a, b),$$
 for every $b \in B$ and $a \in A$, is an injection. If $(A, f, B)$ is both separable and extensional, it is called
 biextensional. If $B \subset X^A$ and $f \colon A \times B \to X$ is defined by $f(a, b) = b(a)$, then $(A, f, B)$ 
 is called a normal Chu space. The Chu spaces in $\Chu(\Set, \D 2)$ are called Boolean.   

\end{definition}

\begin{definition}\label{def: affine}
If $\C C$ is a category and $\gamma \in C_0$, the affine category
$\Aff(\C C, \gamma)$ over $\C C$ and $\gamma$ has objects pairs $(a, F)$, where $a \in C_0$ and
$F \subseteq C_1(a, \gamma) = \Hom(a, \gamma)$, and a morphism $h \colon (a, F) \to (b, G)$ in $\Aff(\C C, \gamma)$
is a morphism 
$h \colon a \to b$ in $C_1$ such that $g \circ h \in F$, for every $g \in G$.
\end{definition}

Next we fix some basic terminology.


\begin{definition}\label{def: churepresent}
Let $\C C, \C D$ be categories and $F \colon C \to \C D$ a functor. $F$ is an embedding, if it is injective on objects and
faithful, and its is a representation, if it is a
full embedding. If $\C D$ is a Chu category and $F$ is a representation, we call $F$ a 
Chu representation. We call a Chu representation $F$ strict, if $F$ is injective on arrows. We call a Chu representation 
boolean $($normal$)$, if $F_0(a)$ is a Boolean $($normal$)$ Chu space, for every $a \in C_0$. 
 
\end{definition}

All Chu representations included in this paper are going to be strict. If $\C C$ is a $\CCC$, let $\ev_{\gamma, a} \colon a \times \gamma^a \to \gamma$ in $C_1$ such that for every $f \colon a \times b \to \gamma$ there is a unique $\widehat{f} \colon b \to \gamma^a$ with $f = \ev_{\gamma, a} \circ \big(1_a \times \widehat{f}\big)$. The next result is also standard, and its proof 
is constructive. The normal Chu representation of $\Set$ through $E^{\Set, \D 2}$
into $\Chu(\Set, \D 2)$ is classically the ``same'' to the boolean Chu representation of $\Set$ into $\Chu(\Set, \D 2)$ in section~\ref{sec: boolechurepr}, which relies though, on the classical treatment of negation.

\begin{proposition}[Chu representation of a $\CCC$]\label{prp: cccrepr}
 The functor
 $E^{\C C, \gamma} \colon \C C \to \Chu(\C C, \gamma)$, defined by
 $$E^{\C C, \gamma}_0(a) = \big(a, \ev_{\gamma, a}, \gamma^a\big), $$
 $$E^{\C C,\gamma}_1(f \colon a \to b) = (f, f^-) \colon \big(a, \ev_{\gamma, a}, \gamma^a\big) \to \big(b, \ev_{\gamma, b}, 
 \gamma^b\big),$$
 $$f^- = \widehat{h} \colon \gamma^b \to \gamma^a, \ \ \  h = \ev_{\gamma, b}  \circ  \big(f \times 1_{\gamma^b}\big),$$
 \begin{center}
\begin{tikzpicture}

\node (E) at (0,0) {$\mathsmaller{a \times \gamma^a}$};
\node[right=of E] (T) {};
\node[right=of T] (F) {$ \ \mathsmaller{\gamma}$};
\node[below=of E] (B) {};
\node[below=of B] (A) {$ \ \mathsmaller{a \times \gamma^b}$};
\node[below=of T] (S) {$ \ \mathsmaller{b \times \gamma^b} $};

\draw[->] (E)--(F) node [midway,above] {$\mathsmaller{\ev_{\gamma, a}}$};
\draw[->] (A)--(E) node [midway,left] {$\mathsmaller{1_a \times \widehat{h}} \ $};
\draw[MyBlue,->] (A)--(S) node [midway,right] {$ \ \mathsmaller{f \times 1_{\gamma^b}}$};
\draw[MyBlue,->] (S)--(F) node [midway,right] {$ \ \mathsmaller{\ev_{\gamma, b}}$};

\end{tikzpicture}
\end{center}
 is a strict Chu representation of $\C C$ into $\Chu(\C C, \gamma)$.
\end{proposition}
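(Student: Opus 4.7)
My plan is to verify separately the ingredients of a strict Chu representation: $(i)$ that each pair $(f, f^-)$ really is a Chu transform; $(ii)$ functoriality of $E^{\C C, \gamma}$; $(iii)$ injectivity on objects; $(iv)$ faithfulness together with strictness; and $(v)$ fullness. Throughout, the workhorse is the universal property of $\ev_{\gamma, a}$, which says that $\widehat{k}$ is the \emph{unique} arrow satisfying $k = \ev_{\gamma, a} \circ (1_a \times \widehat{k})$ for $k \colon a \times b \to \gamma$.

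For $(i)$, the rectangle of Definition~\ref{def: chu} in our case reads $\ev_{\gamma, a} \circ (1_a \times f^-) = \ev_{\gamma, b} \circ (f \times 1_{\gamma^b})$. Since $f^- := \widehat{h}$ with $h := \ev_{\gamma, b} \circ (f \times 1_{\gamma^b})$, this is precisely the defining equation of $\widehat{h}$, so $(f, f^-)$ is a legitimate morphism in $\Chu(\C C, \gamma)$. For $(ii)$, preservation of identities holds because $\ev_{\gamma, a}$ itself satisfies $\ev_{\gamma, a} \circ (1_a \times 1_{\gamma^a}) = \ev_{\gamma, a}$, whence $(1_a)^- = 1_{\gamma^a}$ by the uniqueness clause. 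Preservation of composition amounts to $(g \circ f)^- = f^- \circ g^-$ for $f \colon a \to b$ and $g \colon b \to c$; by uniqueness it suffices to check that $f^- \circ g^-$ satisfies the equation defining $\widehat{h_{g \circ f}}$, where $h_{g \circ f} = \ev_{\gamma, c} \circ ((g \circ f) \times 1_{\gamma^c})$. A short diagram chase combining equations~(\ref{eq: cmc2})--(\ref{eq: cmc4}) with the two instances $\ev_{\gamma, a} \circ (1_a \times f^-) = \ev_{\gamma, b} \circ (f \times 1_{\gamma^b})$ and $\ev_{\gamma, b} \circ (1_b \times g^-) = \ev_{\gamma, c} \circ (g \times 1_{\gamma^c})$ of the defining identity delivers the desired equality.

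Items $(iii)$--$(v)$ are quick. Injectivity on objects is immediate from inspecting the first coordinate $a$ of $E^{\C C, \gamma}_0(a)$. Faithfulness and strictness both hold trivially because $E^{\C C, \gamma}_1(f) = (f, f^-)$ has $f$ itself as its positive part, so $f \mapsto E^{\C C, \gamma}_1(f)$ is injective. For $(v)$, fullness, given any Chu transform $(\phi^+, \phi^-) \colon E^{\C C, \gamma}_0(a) \to E^{\C C, \gamma}_0(b)$ the commutativity of the defining rectangle is the equation $\ev_{\gamma, a} \circ (1_a \times \phi^-) = \ev_{\gamma, b} \circ (\phi^+ \times 1_{\gamma^b})$, which by the uniqueness half of the universal property forces $\phi^- = (\phi^+)^-$; hence $(\phi^+, \phi^-) = E^{\C C, \gamma}_1(\phi^+)$.

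The main obstacle is the composition case in $(ii)$: one must carefully track which copies of the bifunctor $\times$ act on which factor, and invoke the right instances of equations~(\ref{eq: cmc2})--(\ref{eq: cmc4}) to reduce the compound expression $\ev_{\gamma, a} \circ (1_a \times (f^- \circ g^-))$ successively to $\ev_{\gamma, b} \circ (f \times 1_{\gamma^b}) \circ (1_a \times g^-)$, then to $\ev_{\gamma, c} \circ (g \times 1_{\gamma^c}) \circ (f \times 1_{\gamma^c})$, and finally to $\ev_{\gamma, c} \circ ((g \circ f) \times 1_{\gamma^c})$. Everything else reduces directly to the uniqueness half of the exponential adjunction and to the constructive, coordinate-free manipulations already packaged in equations~(\ref{eq: cmc1})--(\ref{eq: cmc4}).
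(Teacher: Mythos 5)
Your proof is correct and is precisely the standard argument the paper has in mind: the paper states Proposition~\ref{prp: cccrepr} without proof, declaring it standard, and your verification via the uniqueness half of the exponential transpose (for well-definedness of $(f,f^-)$, preservation of identities and composition, and fullness) together with the first-coordinate observation (for injectivity on objects and strictness) is exactly that argument, with the composition chain $\ev_{\gamma,a}\circ(1_a\times(f^-\circ g^-)) = \ev_{\gamma,b}\circ(f\times 1_{\gamma^b})\circ(1_a\times g^-) = \ev_{\gamma,c}\circ(g\times 1_{\gamma^c})\circ(f\times 1_{\gamma^c}) = \ev_{\gamma,c}\circ((g\circ f)\times 1_{\gamma^c})$ correctly instantiating equations~(\ref{eq: cmc2})--(\ref{eq: cmc4}).
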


\section{The global Chu functor}
\label{sec: globalchu}

 If a functor $F \colon \C C \to \C D$ preserves products
 (i.e., binary product diagrams), then for every $a, b \in C_0$ there is a unique morphism
 $F_{ab} \colon  F_0(a) \times F_0(b) \to F_0(a \times b)$, which is an isomorphism
 \begin{center}
\begin{tikzpicture}

\node (E) at (0,0) {$a$};
\node[right=of E] (T) {$a \times b$};
\node[right=of T] (F) {$b$};

\draw[->] (T)--(E) node [midway,above] {$\pr_a$};
\draw[->] (T)--(F) node [midway,above] {$\pr_b$};

\end{tikzpicture}
\end{center} 
  \begin{center}
\begin{tikzpicture}

\node (E) at (0,0) {$F_0(a)$};
\node[right=of E] (S) {};
\node[right=of S] (T) {$F_0(a \times b)$};
\node[right=of T] (X) {};
\node[right=of X] (F) {$F_0(b)$};
\node[below=of T] (U) {$F_0(a) \times F_0(b)$.};

\draw[->] (T)--(E) node [midway,above] {$F_1(\pr_a)$};
\draw[->] (T)--(F) node [midway,above] {$F_1(\pr_b)$};
\draw[->] (U)--(E) node [midway,left] {$\pr_{F_0(a)} \ $};
\draw[->] (U)--(F) node [midway,right] {$ \ \ \ \pr_{F_0(b)}$};
\draw[MyBlue,->] (U)--(T) node [midway,right] {$F_{ab}$};

\end{tikzpicture}
\end{center}   
For every $a, a{'}, b, b{'} \in C_0$ and every $f \colon a \to a{'}, g \colon b \to b{'}$ in $C_1$
 the following rectangle commutes
\begin{center}
\begin{tikzpicture}

\node (E) at (0,0) {$\mathsmaller{F_0(a \times b)}$};
\node[right=of E] (T) {};
\node[right=of T] (F) {$\mathsmaller{F_0(a{'} \times b{'})}$};
\node[below=of E] (A) {$\mathsmaller{F_0(a) \times F_0(b)}$};
\node[below=of F] (S) {$\mathsmaller{F_0(a{'}) \times F_0(b{'})}.$};

\draw[->] (A)--(E) node [midway,left] {$\mathsmaller{F_{ab}}$};
\draw[->] (S)--(F) node [midway,right] {$\mathsmaller{F_{a{'}b{'}}}$};
\draw[->] (E)--(F) node [midway,above] {$\mathsmaller{F_1(f \times g)}$};
\draw[->] (A)--(S) node [midway,below] {$\mathsmaller{F_1(f) \times F_1(g)}$};

\end{tikzpicture}
\end{center}
If $G \colon \C D \to \C E$ also preserves products and $(G_{cd})_{c, d \in D_0}$ are the canonical isomorphisms 
$G_{cd} \colon  G_0(c) \times G_0(d) \to G_0(c \times d)$, then $G \circ F$ also preserves products and for every
$a, b \in C_0$ we have that
$$(G \circ F)_{ab} = G_1(F_{ab}) \circ G_{F_0(a)F_0(b)}$$
 
 \begin{center}
\begin{tikzpicture}

\node (E) at (0,0) {$\mathsmaller{G_0(F_0(a)) \times G_0(F_0(b))}$};
\node[right=of E] (T) {};
\node[left=of E] (S) {};
\node[below=of T] (A) {$\mathsmaller{G_0(F_0(a) \times F_0(b))}$};
\node[below=of E] (X) {};
\node[below=of X] (U) {$\mathsmaller{G_0(F_0(a \times b))}$};
\node[right=of U] (W) {};
\node[right=of W] (B) {$\mathsmaller{G_0(F_0(b))}$};
\node[left=of U] (C) {};
\node[left=of C] (D) {$\mathsmaller{G_0(F_0(a))}$};

\draw[->,bend left=60] (E) to node [midway,right] {$ \ \mathsmaller{\pr_{G_0(F_0(a))}}$} (B);
\draw[->] (E)--(D) node [midway,left] {$\mathsmaller{\pr_{G_0(F_0(a))}} \ \ $};
\draw[->] (U)--(B) node [midway,below] {$\mathsmaller{G_1(F_1(\pr_b))}$};
\draw[->] (U)--(D) node [midway,below] {$\mathsmaller{G_1(F_1(\pr_a))}$};
\draw[MyBlue,->] (E)--(U) node [midway,left] {$\mathsmaller{(G \circ F)_{ab}}$};
\draw[MyBlue,->] (E)--(A) node [midway,right] {$\mathsmaller{G_{F_0(a)F_0(b)}}$};
\draw[MyBlue,->] (A)--(U) node [midway,right] {$ \ \mathsmaller{G_1(F_{ab})}$};

\end{tikzpicture}
\end{center} 
The canonical isomorphisms of the identity functor $\Id^{\C C}$ on $\C C$ is the 
family $(1_{a \times b})_{a, b \in C_0}$.

\begin{lemma}\label{lem: lemextfunctor}
 Let $F \colon  \C C \to \C D$ be a product-preserving functor with $(F_{ab})_{a, b \in C_0}$ the canonical isomorphisms 
 of $F$, and let
 $\phi \colon F_0(\gamma) \to \delta$ 
 in $D_1$. The rule
 $F_* \colon \Chu(\C C, \gamma) \to \Chu(\C D, \delta)$, defined by
 $$(F_*)_0(a, f, b) = \big(F_0(a), \phi \circ F_1(f) \circ F_{ab}, F_0(b)\big)$$
 \begin{center}
\begin{tikzpicture}

\node (E) at (0,0) {$F_0(a) \times F_0(b)$};
\node[right=of E] (T) {$F_0(a \times b)$};
\node[right=of T] (F) {$F_0(\gamma)$};
\node[right=of F] (D) {$\delta$};

\draw[MyBlue,->] (E)--(T) node [midway,above] {$F_{ab}$};
\draw[MyBlue,->] (T)--(F) node [midway,above] {$F_1(f)$};
\draw[MyBlue,->] (F)--(D) node [midway,above] {$\phi$};

\end{tikzpicture}
\end{center}
$$(F_*)_1\big(\phi^+, \phi^-\big) \colon \big(F_0(a), \phi \circ F_1(f) \circ F_{ab}, F_0(b)\big)
\to \big(F_0(c), \phi \circ F_1(g) \circ F_{cd}, F_0(d)\big),$$
$$(F_*)_1\big(\phi^+, \phi^-\big) = \big(F_1(\phi^+), F_1(\phi^-)\big),$$
where $\big(\phi^+, \phi^-\big) \colon (a, f, b) \to (c, g, d)\big)$, is a functor.
\end{lemma}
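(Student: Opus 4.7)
\textbf{Proof plan for Lemma~\ref{lem: lemextfunctor}.} The plan is to check the four standard requirements for $F_*$ to be a functor: (i) objects go to objects of the right type, (ii) morphisms go to morphisms of the right type (i.e.\ the Chu rectangle commutes in $\C D$), (iii) identities are preserved, (iv) composition is preserved. Steps (i), (iii) and (iv) should be purely formal; the only substantive verification is (ii).

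\smallskip

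For (i), given $(a,f,b)$ with $f \colon a \times b \to \gamma$, the composite $\phi \circ F_1(f) \circ F_{ab} \colon F_0(a) \times F_0(b) \to \delta$ is well-defined by functoriality of $F$ and the definition of $F_{ab}$, as shown in the blue diagram in the statement. For (iii), $F_*$ sends $1_{(a,f,b)} = (1_a, 1_b)$ to $(F_1(1_a), F_1(1_b)) = (1_{F_0(a)}, 1_{F_0(b)})$, which is the identity on $(F_*)_0(a,f,b)$ by functoriality of $F$. For (iv), given composable Chu transforms $(\phi^+, \phi^-)$ and $(\theta^+, \theta^-)$, functoriality of $F$ gives $F_1(\theta^+ \circ \phi^+) = F_1(\theta^+) \circ F_1(\phi^+)$ and similarly $F_1(\phi^- \circ \theta^-) = F_1(\phi^-) \circ F_1(\theta^-)$, which matches the composition rule in $\Chu(\C D, \delta)$.

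\smallskip

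The main step is (ii), which I expect to be the only nontrivial part. Given $(\phi^+, \phi^-) \colon (a,f,b) \to (c,g,d)$, I need to show that
\begin{equation*}
\bigl(\phi \circ F_1(f) \circ F_{ab}\bigr) \circ \bigl(1_{F_0(a)} \times F_1(\phi^-)\bigr)
= \bigl(\phi \circ F_1(g) \circ F_{cd}\bigr) \circ \bigl(F_1(\phi^+) \times 1_{F_0(d)}\bigr).
\end{equation*}
The strategy is to pull the tensor products of morphisms through the canonical isomorphisms $F_{ab}$ using their naturality, so that everything gets transported to a single rectangle in $\C C$ (namely the Chu condition defining $(\phi^+, \phi^-)$), and then hit by $F_1$ and post-composed with $\phi$. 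Concretely, writing $1_{F_0(a)} = F_1(1_a)$ and $1_{F_0(d)} = F_1(1_d)$ and invoking the naturality square for the canonical isomorphisms displayed before the lemma, one gets
\begin{align*}
F_{ab} \circ \bigl(1_{F_0(a)} \times F_1(\phi^-)\bigr) &= F_1(1_a \times \phi^-) \circ F_{ad}, \\
F_{cd} \circ \bigl(F_1(\phi^+) \times 1_{F_0(d)}\bigr) &= F_1(\phi^+ \times 1_d) \circ F_{ad}.
\end{align*}
Substituting back and using functoriality of $F$ reduces both sides to $\phi \circ F_1\bigl(f \circ (1_a \times \phi^-)\bigr) \circ F_{ad}$ and $\phi \circ F_1\bigl(g \circ (\phi^+ \times 1_d)\bigr) \circ F_{ad}$ respectively, and these are equal because $f \circ (1_a \times \phi^-) = g \circ (\phi^+ \times 1_d)$ is exactly the Chu condition defining $(\phi^+, \phi^-)$ in $\Chu(\C C, \gamma)$.

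\smallskip

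The only potential pitfall is bookkeeping of the naturality squares for $(F_{ab})_{a,b \in C_0}$, since one must apply them with one coordinate being an identity; but this follows from the general naturality diagram applied with $f = 1_a$ (respectively $g = 1_d$), combined with $F_1(1_x) = 1_{F_0(x)}$. No use of closedness of $\C C$ or $\C D$ is needed, only that $F$ preserves products.
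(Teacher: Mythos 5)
Your proposal is correct and follows essentially the same route as the paper: the paper's proof also reduces the Chu rectangle in $\C D$ to the one in $\C C$ by sliding $1_{F_0(a)} \times F_1(\phi^-)$ and $F_1(\phi^+) \times 1_{F_0(d)}$ through $F_{ab}$, $F_{ad}$, $F_{cd}$ via the naturality squares for the canonical isomorphisms, applying $F_1$ to the defining Chu condition, and dismissing preservation of units and composition as immediate. Your remark that only product preservation (not closedness) is needed matches the hypotheses as stated.
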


\begin{proof}
To show that $F_*$ is well-defined, we show that $(F_*)_0\big(\phi^+, \phi^-\big) \colon 
 \big(F_0(a), \phi \circ F_1(f) \circ F_{ab}, F_0(b)\big)
\to \big(F_0(c), \phi \circ F_1(g) \circ F_{cd}, F_0(d)\big)$ i.e., the following diagram commutes
\begin{center}
\begin{tikzpicture}

\node (E) at (0,0) {$\mathsmaller{F_0(a) \times F_0(d)}$};
\node[right=of E] (T) {};
\node[right=of T] (F) {$\mathsmaller{F_0(a) \times F_0(b)}$};
\node[below=of E] (A) {$\mathsmaller{F_0(c) \times F_0(d)}$};
\node[below=of F] (S) {$\mathsmaller{\delta}.$};

\draw[->] (E)--(A) node [midway,left] {$\mathsmaller{F_1(\phi^+) \times 1_{F_0(d)}}$};
\draw[->] (F)--(S) node [midway,right] {$\mathsmaller{\phi \circ F_1(f) \circ F_{ab}}$};
\draw[->] (E)--(F) node [midway,above] {$\mathsmaller{1_{F_0(a)} \times F_1(\phi^-)}$};
\draw[->] (A)--(S) node [midway,below] {$\mathsmaller{\phi \circ F_1(g) \circ F_{cd}}$};

\end{tikzpicture}
\end{center}
By the commutativity of the following diagrams we have that
\begin{center}
\begin{tikzpicture}

\node (E) at (0,0) {$\mathsmaller{a \times d}$};
\node[right=of E] (T) {};
\node[right=of T] (F) {$\mathsmaller{a \times b}$};
\node[below=of E] (A) {$\mathsmaller{c \times d}$};
\node[below=of F] (S) {$\mathsmaller{\gamma}$};

\draw[->] (E)--(A) node [midway,left] {$\mathsmaller{\phi^+ \times 1_d}$};
\draw[->] (F)--(S) node [midway,right] {$\mathsmaller{f}$};
\draw[->] (E)--(F) node [midway,above] {$\mathsmaller{1_a \times \phi^-}$};
\draw[->] (A)--(S) node [midway,below] {$\mathsmaller{g}$};

\end{tikzpicture}
\end{center}
\begin{center}
\begin{tikzpicture}

\node (E) at (0,0) {$\mathsmaller{F_0(a \times d)}$};
\node[right=of E] (T) {};
\node[right=of T] (F) {$\mathsmaller{F_0(c \times d)}$};
\node[below=of E] (A) {$\mathsmaller{F_0(a) \times F_0(d)}$};
\node[below=of F] (S) {$\mathsmaller{F_0(c) \times F_0(d)}$};
\node[right=of F] (K) {$\mathsmaller{F_0(a \times d)}$};
\node[right=of K] (B) {};
\node[right=of B] (C) {$\mathsmaller{F_0(a \times b)}$};
\node[below=of K] (L) {$\mathsmaller{F_0(a) \times F_0(d)}$};
\node[below=of C] (M) {$\mathsmaller{F_0(a) \times F_0(b)}$};

\draw[->] (A)--(E) node [midway,left] {$\mathsmaller{F_{ad}}$};
\draw[->] (S)--(F) node [midway,right] {$\mathsmaller{F_{cd}}$};
\draw[->] (E)--(F) node [midway,above] {$\mathsmaller{F_1(\phi^+ \times 1_d)}$};
\draw[->] (A)--(S) node [midway,below] {$\mathsmaller{F_1(\phi^+) \times F_1(1_d)}$};
\draw[->] (L)--(K) node [midway,left] {$\mathsmaller{F_{ad}}$};
\draw[->] (M)--(C) node [midway,right] {$\mathsmaller{F_{ab}}$};
\draw[->] (K)--(C) node [midway,above] {$\mathsmaller{F_1(1_a \times \phi^-)}$};
\draw[->] (L)--(M) node [midway,below] {$\mathsmaller{F_1(1_a) \times F_1(\phi^-)}$};

\end{tikzpicture}
\end{center}
\begin{align*}
\phi \circ F_1(f) \circ F_{ab} \circ [1_{F_0(a)} \times F_1(\phi^-)] & =  \phi \circ F_1(f) \circ F_1(1_a \times \phi^-)
\circ F_{ad}\\
& = \phi \circ F_1(g) \circ F_1(\phi^+ \times 1_d) \circ F_{ad}\\
& = \phi \circ F_1(g) \circ F_{cd} \circ [F_1(\phi^+) \times F_1(1_d)]\\
& = \phi \circ F_1(g) \circ F_{cd} \circ [F_1(\phi^+) \times 1_{F_0(d)}].
\end{align*}
The preservation of the units and compositions by $F_*$ are immediate to show.
\end{proof}

If $\eta \colon F \To G$, we cannot define a natural transformation $\eta_* \colon F_* \To G_*$ i.e., 
we cannot show that $F \mapsto F_*$ is a functor on the category $\Fun^{\times}(\C C, \C D)$ of product-preserving functors from 
$\C C$ to $\C D$. What we showed though, in the previous lemma is that the \textit{pair} $(F, \phi)$ generated the 
functor $F_* \colon \Chu(\C C, \gamma) \to \Chu(\C D, \delta)$. Next we describe an instance of the (generalised)
covariant Grothendieck construction that defines the category with respect to which $(F, \phi) \mapsto F_*$
becomes a functor.

\begin{definition}[A covariant Grothendieck construction]\label{def: ccCat}
 Let $\ccCat$ be the category of cartesian closed categories with morphisms the product preserving functors\footnote{One 
 could have considered the cartesian closed functors i.e., the functors preserving the whole structure of a cartesian closed
 category, as morphisms of $\ccCat$.}. The Grothendieck category 
 $$\Groth\big(\ccCat, \Id^{\ccCat}\big)$$
 over $\ccCat$ and the 
 covariant identity functor $\Id^{\ccCat} \colon \ccCat \to \Cat$ has objects pairs $(\C C, \gamma)$, where 
 $\C C$ is a cartesian closed category and 
 $\gamma \in \Ob_{ \Id^{\ccCat}_0(\C C)} = C_0$. A morphism $(F, \phi) \colon (\C C, \gamma) \to (\C D, \delta)$ is a
 product-preserving functor $F \colon \C C \to \C D$ and a morphism $\phi \colon  \big[\Id^{\ccCat}_1(F)\big]_0(\gamma) 
 \to \delta$ i.e., $\phi \colon F_0(\gamma) \to \delta$. If $(G, \theta) \colon (\C D, \delta) \to (\C E, \varepsilon)$, then 
 $(G, \theta) \circ (F, \phi) = \big(G \circ F, \theta \circ G_1(\phi)\big)$. Moreover, $1_{(\C C, \gamma)} = 
 \big(\Id^{\C C}, 1_{\gamma}\big)$. 
\end{definition}

\begin{theorem}[The global Chu functor]\label{prp: extfunctor}
The rule 
$\Chu \colon \Groth\big(\ccCat, \Id^{\ccCat}\big) \to \Cat$,
defined by
$$\Chu_0(\C C, \gamma) = \Chu(\C C, \gamma),$$
$$\Chu_1\big(F, \phi) \colon (\C C, \gamma) \to (\C D, \delta)\big) \colon \Chu(\C C, \gamma) \to \Chu(\C D, \delta),$$
$$\Chu_1\big(F, \phi) = F_*,$$
where $F_*$ is defined in Lemma~\ref{lem: lemextfunctor}, is a functor. Moreover, 
if $F \colon \C C \to \C D$ is a full embedding and
$\phi$ is a monomorphism, then $F_*$ is a full embedding of $\Chu(\C C, \gamma)$ into $\Chu(\C D, \delta)$. 
\end{theorem}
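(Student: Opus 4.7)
The plan is to split the theorem into two parts: (i) functoriality of $\Chu$, and (ii) the full-embedding property of $F_*$ under the stated hypotheses. For (i), Lemma~\ref{lem: lemextfunctor} already establishes that $F_{*} = \Chu_{1}(F,\phi)$ is a well-defined functor $\Chu(\C C,\gamma) \to \Chu(\C D, \delta)$, so only the preservation of identities and of composition by $\Chu$ itself needs to be checked. For identities, I would observe that $\Id^{\C C}$ preserves products with canonical isomorphisms $F_{ab} = 1_{a \times b}$, so $(\Id^{\C C}, 1_{\gamma})_{*}(a,f,b) = (a,\, 1_{\gamma} \circ \Id^{\C C}_{1}(f) \circ 1_{a\times b},\, b) = (a,f,b)$, and analogously on Chu arrows, giving $\Chu_{1}(1_{(\C C,\gamma)}) = 1_{\Chu(\C C,\gamma)}$.

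For preservation of composition, given $(F,\phi) \colon (\C C,\gamma) \to (\C D, \delta)$ and $(G,\theta) \colon (\C D,\delta) \to (\C E,\varepsilon)$, I would expand both $G_{*}(F_{*}(a,f,b))$ and $((G,\theta) \circ (F,\phi))_{*}(a,f,b)$ on the value-morphism component. Applying functoriality of $G_{1}$ to distribute over $\phi \circ F_{1}(f) \circ F_{ab}$, and then invoking the formula $(G \circ F)_{ab} = G_{1}(F_{ab}) \circ G_{F_{0}(a)F_{0}(b)}$ derived earlier in the section, one sees that the two expressions coincide with $(\theta \circ G_{1}(\phi)) \circ (G \circ F)_{1}(f) \circ (G \circ F)_{ab}$; the matching on Chu-morphism components reduces to functoriality of $F$ and $G$ applied componentwise to $(\phi^{+}, \phi^{-})$.

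For (ii), assume $F$ is a full embedding and $\phi$ is a monomorphism; I would verify separately that $F_{*}$ is injective on objects, faithful, and full. Injectivity on objects follows from the corresponding property of $F_{0}$, together with the fact that equality of value components $\phi \circ F_{1}(f) \circ F_{ab} = \phi \circ F_{1}(g) \circ F_{ab}$ cancels by the monicity of $\phi$, the iso-ness of $F_{ab}$, and the faithfulness of $F$. Faithfulness of $F_{*}$ is immediate since $(F_{*})_{1}(\phi^{+},\phi^{-}) = (F_{1}(\phi^{+}), F_{1}(\phi^{-}))$ is the image under the faithful pair $(F_{1}, F_{1})$.

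The principal obstacle is fullness. Given a Chu morphism $(\alpha,\beta) \colon (F_{0}(a), \phi \circ F_{1}(f) \circ F_{ab}, F_{0}(b)) \to (F_{0}(c), \phi \circ F_{1}(g) \circ F_{cd}, F_{0}(d))$ in $\Chu(\C D,\delta)$, fullness of $F$ lifts $\alpha = F_{1}(\phi^{+})$ and $\beta = F_{1}(\phi^{-})$ for some $\phi^{+} \colon a \to c$, $\phi^{-} \colon d \to b$ in $\C C$; I then need to verify that $(\phi^{+},\phi^{-})$ is a Chu morphism $(a,f,b) \to (c,g,d)$, i.e., $f \circ (1_{a} \times \phi^{-}) = g \circ (\phi^{+} \times 1_{d})$. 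Unpacking the square in $\C D$ witnessing that $(\alpha,\beta)$ is a Chu morphism and applying the naturality square for the canonical isomorphisms $F_{ab}$ displayed at the start of Section~\ref{sec: globalchu}, both composites rewrite as $\phi \circ F_{1}(h) \circ F_{ad}$, with $h = f \circ (1_{a} \times \phi^{-})$ on the one side and $h = g \circ (\phi^{+} \times 1_{d})$ on the other (this is exactly the chain of equalities computed in the proof of Lemma~\ref{lem: lemextfunctor}). Cancelling the iso $F_{ad}$, then the mono $\phi$, and finally stripping $F_{1}$ by faithfulness of $F$, yields the desired identity in $\C C$ and completes the argument.
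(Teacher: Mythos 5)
Your proposal is correct and follows essentially the same route as the paper: well-definedness via Lemma~\ref{lem: lemextfunctor}, identities via $\big[\Id^{\C C}\big]_*$, composition via the identity $(G \circ F)_{ab} = G_1(F_{ab}) \circ G_{F_0(a)F_0(b)}$, and the full-embedding claim via monicity of $\phi$, the iso $F_{ab}$, and the embedding properties of $F$. The only difference is that you spell out the fullness argument (lifting $(\alpha,\beta)$ through $F$ and cancelling $F_{ad}$, $\phi$, and $F_1$ to recover the Chu-morphism square in $\C C$), which the paper dismisses as immediate; your version of that step is accurate.
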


\begin{proof}
 By Lemma~\ref{lem: lemextfunctor} $\Chu_1(F, \phi)$ is well-defined. Clearly,
 $$\Chu_1(1_{(\C C, \gamma)}) = \Chu_1 \big(\Id^{\C C}, 1_{\gamma}\big) = \big[\Id^{\C C}\big]_* = 1_{\Chu(\C C, \gamma)}.$$
 If $(G, \theta) \colon (\C D, \delta) \to (\C E, \varepsilon)$, we show that 
 $(G \circ F)_* = G_* \circ F_*$. By definition $(G, \theta) \circ (F, \phi) = \big(G \circ F, \theta \circ G_1(\phi)\big)$,
 and by the equality shown for the canonical isomorphisms $[(G \circ F)_{ab}]_{a, b \in C_0}$ we get 
 \begin{align*}
  \big[(G \circ F)_*\big]_0(a, f, b) & = \big(G_0(F_0(a)), \theta \circ G_1(\phi) \circ G_1(F_1(f)) \circ (G \circ F)_{ab}, 
  G_0(F_0(b))\big)\\
  & = \big(G_0(F_0(a)), \theta \circ G_1(\phi) \circ G_1(F_1(f)) \circ G_1(F_{ab}) \circ G_{F_0(a)F_0(b)}, 
  G_0(F_0(b))\big)\\
  & = \big(G_0(F_0(a)), \theta \circ G_1\big[\phi \circ F_1(f) \circ F_{ab}\big] \circ G_{F_0(a)F_0(b)}, 
  G_0(F_0(b))\big)\\
  & = (G_*)_0\big(F_0(a), \phi \circ F_1(f) \circ F_{ab}, F_0(b)\big)\\
  & = (G_*)_0\big((F_*)_0(a, f, b)\big).
 \end{align*}
The equality $[(G \circ F)_*]_1(\phi^+, \phi^-) = (G_*)_1\big((F_*)_1(\phi^+, \phi^-)\big)$ follows immediately.
Let $F \colon \C C \to \C D$ be a full embedding and
$\phi$ a monomorphism. The equality $\big(F_0(a), \phi \circ F_1(f) \circ F_{ab}, F_0(b)\big) =
 \big(F_0(a{'}), \phi \circ F_1(f{'}) \circ F_{a{'}b{'}}, F_0(b{'})\big)$ implies $a = a{'}, b = b{'}$, and as 
 $\phi$ is a monomorphism and $F_ab$ an isomorphism, hence an epimorphism, we get $ F_1(f) =  F_1(f{'})$, hence
 $f = f{'}$. The fact that $F_*$ is faithful and full follows immediately.
\end{proof}

The local Chu functor is a special case of the global one. Namely,
$$\Chu_1(\Id^{\C C}, u \colon \gamma \to \delta) = u_* = \Chu^{\C C}_1(u) \colon \Chu(\C C, \gamma) \to
\Chu(\C C, \delta).$$

If $F \colon \C C\to \C D$, a left $F$-coalgebra is a triplet
$\big(\gamma \in C_0, \delta \in \C D_0, \phi \colon F_0(\gamma) \to \delta\big)$.
If $G \colon \C D \to \C C$, a right $G$-coalgebra is a triplet
$\big(\gamma \in C_0, \delta \in \C D_0, \phi \colon \gamma \to G_0(\delta)\big)$.
If $\C D = \C C$, a right $F$-coalgebra of the form 
$\big(\gamma \in C_0, \gamma \in \C D_0, \phi \colon \gamma \to F_0(\gamma)\big)$
is traditionally called an $F$-coalgebra. The relation between Chu spaces and coalgebras is studied 
by Abramsky in~\cite{Ab18}. 

%
%
%
%
%

\section{Boolean Chu representations}
\label{sec: boolechurepr}

The following Chu representation is standard. Recall that the category $\Top$ of topological spaces is not cartesian closed,
and hence we cannot use Proposition~\ref{prp: cccrepr} to represent it.

\begin{proposition}[Chu representation of $\Top$]\label{prp: reprtop}
The functor $E^{\Top} \colon \Top \to \Chu(\Set, \D 2)$, defined by
$$E^{\Top}_0(X, T) = (X, \in_{\mathsmaller{X,T}}, T),$$
$$\in_{\mathsmaller{X,T}} \colon X \times T \to \D 2,$$ 
\[ \in_{\mathsmaller{X,T}}(x, G) = \left\{ \begin{array}{ll}
                 1   &\mbox{, $x \in G$}\\
                 0             &\mbox{, $x \notin G$,}
                 \end{array}
          \right. \] 
$$E^{\Top}_1\big(f \colon (X, T) \stackrel{\cnt} \longrightarrow (Y, S)\big) = \big(f, \big[E_1^{\Top}(f)\big]^-\big)
\colon (X, \in_{\mathsmaller{X,T}}, T) \to (Y, \in_{\mathsmaller{Y,S}}, S),$$
$$f^{-1} = \big[E_1^{\Top}(f)\big]^- \colon S \to T, \ \ \ \ U \mapsto f^{-1}(U),$$
is a strict Chu representation of $\Top$ into $\Chu(\Set, \D 2)$.
\end{proposition}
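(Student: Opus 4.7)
The plan is to verify, in order, that $E^{\Top}$ is well-defined on morphisms, is functorial, is an injection on objects, and is a full and faithful embedding; strictness will then be an easy corollary. Throughout I would think of the Chu commutativity condition concretely: unfolding the definition of the pairing $\in_{\mathsmaller{X,T}}$ into the $\D 2$-valued evaluation, the square in Definition~\ref{def: chu} for objects of the form $(X, \in_{\mathsmaller{X,T}}, T)$ and $(Y, \in_{\mathsmaller{Y,S}}, S)$ with a candidate pair $(\phi^+, \phi^-)$ amounts to the equivalence
\begin{equation*}
\phi^+(x) \in U \ \Longleftrightarrow \ x \in \phi^-(U), \qquad x \in X, \ U \in S.
\end{equation*}

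First I would check that $(f, f^{-1})$ is an arrow of $\Chu(\Set, \D 2)$ when $f \colon (X,T) \to (Y,S)$ is continuous. The equivalence above becomes $f(x) \in U \Leftrightarrow x \in f^{-1}(U)$, which is the definition of inverse image; moreover continuity ensures $f^{-1}(U) \in T$, so $f^{-1} \colon S \to T$ really lands in $T$. Functoriality is then immediate from $1_X^{-1} = 1_X$ and $(g \circ f)^{-1} = f^{-1} \circ g^{-1}$, which exactly gives the Chu composition law $\big(g \circ f, f^{-1} \circ g^{-1}\big)$ of Definition~\ref{def: chu}. Injectivity on objects is obvious, since both $X$ and $T$ can be read off from $E^{\Top}_0(X,T)$, and faithfulness is trivial because $f$ is the first coordinate of $E^{\Top}_1(f)$; the same observation yields strictness (injectivity on arrows globally).

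The key step, and the only one that is not completely routine, is fullness. Given any Chu transform $(\phi^+, \phi^-) \colon (X, \in_{\mathsmaller{X,T}}, T) \to (Y, \in_{\mathsmaller{Y,S}}, S)$, the commuting square reads $\phi^+(x) \in U \Leftrightarrow x \in \phi^-(U)$ for every $x \in X$ and every open $U \in S$; this forces the set-theoretic identity $\phi^-(U) = (\phi^+)^{-1}(U)$. The second coordinate $\phi^-$ takes values in $T$ by definition of the Chu space, so $(\phi^+)^{-1}(U) \in T$ for every $U \in S$, i.e.\ $\phi^+ \colon (X,T) \to (Y,S)$ is continuous; hence $(\phi^+, \phi^-) = E^{\Top}_1(\phi^+)$, proving fullness.

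The main obstacle, modest as it is, is conceptual rather than computational: one must notice that the Chu condition simultaneously encodes two facts, namely that $\phi^-$ is determined as the inverse-image operator of $\phi^+$ and that continuity of $\phi^+$ is forced by the mere existence of a function $\phi^- \colon S \to T$ making the square commute. Once this is isolated, every verification reduces to an instance of the defining equivalence $f(x) \in U \Leftrightarrow x \in f^{-1}(U)$.
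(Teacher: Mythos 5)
Your proof is correct and follows essentially the same route the paper takes: the paper omits the proof of this proposition as ``standard'' but carries out the identical argument for the Bishop-space analogue (Proposition~\ref{prp: reprbish}), where the Chu square is likewise unfolded pointwise to force $\phi^- $ to be the inverse-image (resp.\ precomposition) operator of $\phi^+$ and to extract continuity of $\phi^+$ from the fact that $\phi^-$ lands in the topology. The only point worth adding, which the paper flags right after the proposition, is that the case split defining $\in_{\mathsmaller{X,T}}$ is classical, so unlike the rest of the verification this representation is not constructive.
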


Notice that although the proof of the previous proof is constructive, the definition of $\in_{X,T}$ is classical. One can show classically
that the Chu space $(X, \in_{\mathsmaller{X,T}}, T)$ is separable if and only if the topology $T$ is $T_0$. Clearly,
$(X, \in_{\mathsmaller{X,T}}, T)$ is always extensional.
The special properties of a topology $T$ on a set $X$ play no role in the above definitions i.e., this representation applies 
to more general categories. E.g., a classical Chu representation $E^{\Set} \colon \Set \to \Chu(\Set, \D 2)$ is defined similarly by
$$E^{\Set}_0(X) = (X, f_X, \C P(X)),$$
$$f_X \colon X \times \C P(X) \to \D 2,$$ 
\[ f_X(x, A) = \left\{ \begin{array}{ll}
                 1   &\mbox{, $x \in A$}\\
                 0             &\mbox{, $x \notin A$,}
                 \end{array}
          \right. \] 
$$E^{\Set}_1\big(f \colon X \to Y\big) = \big(f, f^{-1}\big).$$
If we consider the full embedding $\Delta \colon \Set \to \Top$, where
$\Delta_0(X) = (X, \C P(X))$ and $\Delta_1(f \colon X \to Y) = f$,
the following triangle commutes
\begin{center}
\begin{tikzpicture}

\node (E) at (0,0) {$\Top$};
\node[right=of E] (S) {};
\node[right=of S] (T) {$\Chu(\Set, \D 2)$.};
\node[below=of E] (A) {$\Set$};

\draw[->] (E)--(T) node [midway,above] {$E^{\Top}$};
\draw[->] (A)--(E) node [midway,left] {$\Delta$};
\draw[->] (A)--(T) node [midway,right] {$ \ \ E^{\Set}$};

\end{tikzpicture}
\end{center}

%
%
%
%

For all notions mentioned next we refer to~\cite{SW12}, chapter 6. Recall that the Scott topology is Hausdorff, only in a trivial case, and hence it is not completely regular.

\begin{definition}\label{def: infcat}
Let  $\Inf$ be the category of information systems $(X, \Con_X, \vdash_X)$ together with morphisms 
$r \colon (X, \Con_{\mathsmaller{X}}, \vdash_{\mathsmaller{X}}) \to (Y, \Con_{\mathsmaller{Y}}, \vdash_{\mathsmaller{Y}})$
the approximable mappings i.e.,
appropriate relations $r \subseteq \Con_{\mathsmaller{X}} \times Y$. 
If $s \colon (Y, \Con_{\mathsmaller{Y}}, \vdash_{\mathsmaller{Y}}) \to (Z, \Con_{\mathsmaller{Z}}, 
\vdash_{\mathsmaller{Z}})$, the composition $s \circ r$ is defined by
$$A (s \circ r) z :\TOT \exists_{B \in \Con_B}\big(A r B \ \& \ B s z\big).$$
Moreover, $1_{(X, \Con_{\mathsmaller{X}}, \vdash_{\mathsmaller{X}})} = \ \vdash_{\mathsmaller{X}}$. Let 
$|X|$ be the set of ideals of $(X, \Con_{\mathsmaller{X}}, \vdash_{\mathsmaller{X}})$ and $S_{\mathsmaller{X}}$ the 
Scott topology on $|X|$ that has the sets
$\C O_A = \{J \in |X| \mid A \subseteq J\}$, 
where $A \in \Con_{\mathsmaller{X}}$, as a base.
\end{definition}

To show that $\vdash_X (X, \Con_{\mathsmaller{X}}, \vdash_{\mathsmaller{X}}) \to (X, \Con_{\mathsmaller{X}},
\vdash_{\mathsmaller{X}})$ we use the definition of an information 
system. To show that $1_{(X, \Con_{\mathsmaller{X}}, \vdash_{\mathsmaller{X}})} = \ \vdash_{\mathsmaller{X}}$ 
we use the definition of composition of approximable mappings.

\begin{proposition}[Chu represenation of $\Inf$]\label{prp: infchu}
 The functor $S \colon \Inf \to \Top$, where
 $$S_0(X, \Con_{\mathsmaller{X}}, \vdash_{\mathsmaller{X}}) = \big(|X|, S_{\mathsmaller{X}}\big),$$
 $$S_1\big(r \colon (X, \Con_{\mathsmaller{X}}, \vdash_{\mathsmaller{X}}) \to (Y, \Con_{\mathsmaller{Y}}, 
 \vdash_{\mathsmaller{Y}})\big) = |r| \colon |X| \to |Y|,$$
 $$|r|(J) = \big\{y \in Y \mid \exists_{J{'} \subseteq^{\fin} J}\big(J{'} r y\big)\big\},$$
 is a full embedding of $\Inf$ into $\Top$. Consequently, $E^{\Top} \circ S \colon \Inf \to \Chu(\Set, \D 2)$ is a
 a strict Chu representation of $\Inf$ into $\Chu(\Set, \D 2)$.
 
\end{proposition}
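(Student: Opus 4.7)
The strategy is to verify that $S \colon \Inf \to \Top$ is a full embedding, and then obtain the Chu representation of $\Inf$ by composing with $E^{\Top}$ and invoking Proposition~\ref{prp: reprtop}. The functoriality of $S$ breaks into three routine checks: the family $\{\C O_A : A \in \Con_{\mathsmaller{X}}\}$ is a base for a topology on $|X|$ because $\C O_A \cap \C O_B = \C O_{A \cup B}$ whenever $A \cup B \in \Con_{\mathsmaller{X}}$; the function $|r|$ sends ideals to ideals, by combining finite witnesses via the axioms of an approximable mapping; and $|r|^{-1}(\C O_B) = \bigcup_{A \, r \, B} \C O_A$ is open for every finite $B \in \Con_{\mathsmaller{Y}}$ (writing $A \, r \, B$ for $A \, r \, y$ for all $y \in B$), which yields continuity. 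Preservation of identities and composition follows by direct unfolding, as the paper already notes before the statement: $|\vdash_{\mathsmaller{X}}| = \id_{|X|}$ because ideals are deductively closed, and $|s \circ r| = |s| \circ |r|$ by the very definition of composition of approximable mappings.

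For the full-embedding claim, injectivity on objects proceeds by reconstructing the information system from its Scott space: tokens are read off as the principal subbase opens $\C O_{\{x\}}$, consistency as the non-emptiness of the intersections $\bigcap_i \C O_{\{x_i\}}$, and entailment $A \vdash_{\mathsmaller{X}} x$ as the inclusion $\C O_A \subseteq \C O_{\{x\}}$ (working with reduced information systems in the sense of~\cite{SW12}, so that equivalent tokens are identified). Faithfulness follows because $|r| = |r'|$, evaluated at each principal ideal $\overline{A}$ generated by some $A \in \Con_{\mathsmaller{X}}$, recovers the set $\{y : A \, r \, y\}$, forcing $r = r'$. For fullness, given a continuous $f \colon (|X|, S_{\mathsmaller{X}}) \to (|Y|, S_{\mathsmaller{Y}})$, I would define $A \, r \, y \Leftrightarrow y \in f(\overline{A})$, then verify the approximable-mapping axioms and the equality $|r| = f$ using the fact that continuity with respect to the Scott topology implies preservation of directed suprema of ideals, so that the value of $f$ at an arbitrary ideal $J$ is the union of its values at the principal ideals $\overline{A}$ with $A \subseteq^{\fin} J$.

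The ``Consequently'' clause is then immediate: by Proposition~\ref{prp: reprtop}, $E^{\Top}$ is a strict Chu representation of $\Top$ into $\Chu(\Set, \D 2)$, and the composition of two full embeddings that are both injective on arrows is again a full embedding injective on arrows, hence a strict Chu representation of $\Inf$ into $\Chu(\Set, \D 2)$. The main obstacle will be the fullness step for $S$: translating mere topological continuity with respect to the Scott topology into the combinatorial axioms of an approximable mapping requires care, since one must first upgrade continuity to the preservation of directed suprema of ideals and then use that property to check that the relation $A \, r \, y \Leftrightarrow y \in f(\overline{A})$ satisfies every approximability axiom; this is classical domain theory but must be verified against the specific formulation of~\cite{SW12}.
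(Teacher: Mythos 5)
Your overall architecture coincides with the paper's: functoriality via $|\vdash_{\mathsmaller{X}}| = \id_{|X|}$ and $|s \circ r| = |s| \circ |r|$, fullness via the relation $A \, r_f \, y :\TOT y \in f\big(\overline{A}\big)$, faithfulness via $r = r_{|r|}$, and the final clause by composing the full embedding $S$ with the strict Chu representation $E^{\Top}$ of Proposition~\ref{prp: reprtop}. The one place where you genuinely diverge is injectivity on objects, and there your argument has a gap relative to the statement being proved. You propose to reconstruct the information system from the Scott \emph{topology} --- tokens as the basic opens $\C O_{\{x\}}$, consistency as inhabited intersections, entailment as inclusions $\C O_A \subseteq \C O_{\{x\}}$ --- and you correctly sense that this only works for \emph{reduced} information systems, since inter-derivable tokens yield the same basic open. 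But the proposition concerns the whole category $\Inf$, with no reducedness hypothesis, so your argument as stated does not prove it; moreover ``reading tokens off as the principal subbase opens'' presupposes that one can recognise, from the topology alone, which opens are of the form $\C O_{\{x\}}$, which you do not justify.

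The paper avoids all of this by exploiting that the hypothesis is the literal equality $\big(|X|, S_{\mathsmaller{X}}\big) = \big(|Y|, S_{\mathsmaller{Y}}\big)$ of topological spaces, hence in particular equality of the underlying \emph{sets of ideals}, and it reconstructs the system from the point set rather than from the topology: since $\overline{\{x\}}^{\mathsmaller{X}} \in |X| = |Y|$ and every ideal of $Y$ is a subset of $Y$ containing any token whose singleton generates it, one gets $x \in Y$, whence $X = Y$; since $A \subseteq^{\fin} \overline{A}^{\mathsmaller{X}} \in |Y|$, one gets $A \in \Con_{\mathsmaller{Y}}$; and $A \vdash_{\mathsmaller{X}} x$ transfers to $A \vdash_{\mathsmaller{Y}} x$ because $\overline{A}^{\mathsmaller{Y}} \in |Y| = |X|$ is deductively closed under $\vdash_{\mathsmaller{X}}$ and contains $A$. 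No reducedness is needed. If you replace your topological reconstruction by this set-level one, the rest of your plan (including the extra care you rightly flag for the fullness step, where Scott continuity must be upgraded to preservation of directed suprema before verifying the approximability axioms for $r_f$) goes through and matches the paper's proof.
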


\begin{proof}
First we show that $|\vdash_X| = \id_{|X|}$. If $J \in |X|$, then 
$$|\vdash_{\mathsmaller{X}}|(J) = \big\{x \in X \mid \exists_{J{'} \subseteq^{\fin} J}\big(J{'} \vdash_{\mathsmaller{X}}
x\big)\big\}.$$
If $x \in |\vdash_{\mathsmaller{X}}|(J)$, then $J{'} \vdash_{\mathsmaller{X}} x$, for some $J{'} \subseteq^{\fin} J$,
hence $x \in \overline{J} = J$.
If $x \in J$, then $\{x\} \vdash_{\mathsmaller{X}} x$, and hence $x \in |\vdash_{\mathsmaller{X}}|(J)$. 
The equality $|r \circ s| = |r| \circ |s|$ is
straightforward to show. $S$ is full, as if $f \colon |X| \to |Y|$, then $f = |r_f|$, where $A r_f y :\TOT y \in 
f\big(\overline{A}\big)$. $S$ is injective on arrows; if $|r| = |s|$, then $r = r_{|r|} = r_{|s|} = s$. To show that $S$ is
injective on objects, we suppose that $\big(|X|, S_{\mathsmaller{X}}\big) = \big(|Y|, S_{\mathsmaller{X}}\big)$ 
and we show that 
$(X, \Con_{\mathsmaller{X}}, \vdash_{\mathsmaller{X}}) = (Y, \Con_{\mathsmaller{Y}}, \vdash_{\mathsmaller{Y}})$. 
If $x \in X$, then $\overline{\{x\}} \in |Y|$, hence $\overline{\{x\}}
\subseteq Y$, and consequently $x \in Y$. Similarly, we get $Y \subseteq X$. If $A \in \Con_{\mathsmaller{X}}$, then 
$$\overline{A}^{\mathsmaller{X}} = \{x \in X \mid A \vdash_{\mathsmaller{X}} x\} \in |Y|.$$
As $A \subseteq^{\fin} \overline{A}^{\mathsmaller{X}} \in |Y|$, we get $A \in \Con_{\mathsmaller{Y}}$. 
Similarly, we get $\Con_{\mathsmaller{Y}} \subseteq \Con_{\mathsmaller{X}}$. If $A \vdash_{\mathsmaller{X}} x$, then 
$$\overline{A}^{\mathsmaller{Y}} = \{y \in Y \mid A \vdash_{\mathsmaller{Y}} y\} \in |Y| = |X|.$$
Hence, there is $I \in |X|$ such that $I = \overline{A}^{\mathsmaller{Y}}$. As $A \subseteq^{\fin} I$ and $I$ is 
deductively closed, we get $a \in \overline{A}^{\mathsmaller{Y}}$ i.e., $A \vdash_{\mathsmaller{Y}} a$. Similarly, we 
get $\vdash_{\mathsmaller{Y}} \ \subseteq \ \vdash_{\mathsmaller{X}}$.  
\end{proof}

As the category $\Inf$ is cartesian closed, then, according to Proposition~\ref{prp: cccrepr}, there is a normal Chu representation of $\Inf$, which avoids classical reasoning.


\section{Normal Chu representations}
\label{sec: normalchurepr}

We have seen already the normal Chu representation of $\Set$ through $E^{\Set, \D 2}$ into $\Chu(\Set, \D 2)$. 
Next we present the normal Chu representation of the category of Bishop spaces. The notion of Bishop space is a constructive, function-theoretic alternative to the set-based notion of topological space, which was introduced by Bishop in~\cite{Bi67}, revived by Bridges in~\cite{Br12} and elaborated by the author in~\cite{Pe15}-\cite{Pe19b} and~\cite{Pe20b}-\cite{Pe21e}.
For the sake of completeness we give next all necessary definitions related to the proof of a strict Chu representation of the category of Bishop spaces.

\begin{definition}\label{def: cont1}
If $X$ is a set and $\Real$ is the set of real numbers, we denote by $\mathbb{F}(X)$
the set of functions from $X$ to $\Real$, by $\D F^*(X)$ the bounded elements of $\D F(X)$,
and by $\Const(X)$\index{$\Const(X)$} the 
subset of $\mathbb{F}(X)$ of all constant functions on $X$. 
If $a \in \Real$, we denote by $\overline{a}^X$\index{$\overline{a}^X$}
the constant function on $X$ with value $a$. We denote by $\Nat^+$ the set of non-zero natural numbers.  
A function $\phi: \Real \rightarrow \Real$ is called \textit{Bishop continuous}, or simply continuous,
if for every $n \in \Nat^+$ there is a function\index{$\omega_{\phi,n}$} $\omega_{\phi, n}:
\mathbb{R}^{+} \rightarrow \mathbb{R}^{+}$,
$\epsilon \mapsto \omega_{\phi, n}(\epsilon)$, which is called a \textit{modulus 
of continuity}\index{modulus of (uniform) continuity} of $\phi$ on $[-n, n]$, such that the following 
condition is satisfied
\[\forall_{x, y \in [-n, n]}(|x - y| < \omega_{\phi, n}(\epsilon) \Rightarrow 
|\phi(x) - \phi(y)| \leq \epsilon),\]
for every $\epsilon > 0$ and every $n \in \Nat^+$. We denote by $\BR$\index{$\BR$} the set of continuous functions 
from $\Real$ to $\Real$, which is equipped with the pointwise equality inherited from $\D F(\Real)$.

\end{definition}

\begin{definition}\label{def: notation1}
If $X$ is a set, $f, g \in \mathbb{F}(X)$, $\epsilon > 0$, and $\Phi \subseteq \mathbb{F}(X)$, 
let\index{$U(X; \Phi, g, \epsilon)$}  
\[ U(X; g, f, \epsilon) :\TOT \forall_{x \in X}\big(|g(x) - f(x)| \leq \epsilon\big),\]
\[ U(X; \Phi, f) :\TOT \forall_{\epsilon > 0}\exists_{g \in \Phi}\big(U(g, f, \epsilon)\big).\]
If the set $X$ is clear from the context, we write simply $U(f, g, \epsilon)$\index{$U(f,g, \epsilon)$} 
and\index{$U(\Phi, f)$} $U(\Phi, f)$, respectively.\index{$U(X; \Phi, f)$}
We denote by $\Phi^*$ the bounded elements of $\Phi$, and its uniform 
closure\index{uniform closure} $\overline{\Phi}$ is defined by\index{$\overline{\Phi}$} 
\[ \overline{\Phi} := \{f \in \D F(X) \mid U(\Phi, f)\}.\]
\end{definition}

A Bishop topology on $X$ is a certain subset of $\D F(X)$. As the Bishop topologies considered here
are all extensional\footnote{If $X$ is a set and $P$ is an extensional property on $X$ i.e., $P(x) \ \& \ x =_X y \To P(y)$,
the extensional subset $X_P$ of $X$ is defined by separation, $X_P = \{x \in X \mid P(x)\}$, its equality is inherited by that 
of $X$ and the embedding of $X_P$ into $X$ is defined by the identity rule (see~\cite{Pe20}, Definition 2.2.3).}
subsets of $\D F(X)$, we do not mention the embedding $i_F^{\D F(X)} \colon F \eto \D F(X)$, 
which is given in all cases by the identity map-rule. The uniform closure $\overline{\Phi}$ of $\Phi$ is 
an extensional subset of $\D F(X)$.

\begin{definition}\label{def: bishopspace}
A \textit{Bishop space}\index{Bishop space} is a pair $\C F := (X, F)$, where $F$ is an extensional subset of $\D F(X)$,
which is called a \textit{Bishop topology}, or a \textit{topology}\index{Bishop topology}
of functions on $X$, that satisfies the following conditions:\\[1mm]
$(\BS_1)$ If $a \in \Real$, then $\overline{a}^X \in F$.\\[1mm]
$(\BS_2)$ If $f, g \in F$, then $f + g \in F$.\\[1mm]
$(\BS_3)$ If $f \in F$ and $\phi \in \Bic(\Real)$, then $\phi \circ f \in F$
\begin{center}
\begin{tikzpicture}

\node (E) at (0,0) {$X$};
\node[right=of E] (F) {$\Real$};
\node[below=of F] (A) {$\Real$.};

\draw[->] (E)--(F) node [midway,above] {$f$};
\draw[->] (E)--(A) node [midway,left] {$F \ni \phi \circ f \ $};
\draw[->] (F)--(A) node [midway,right] {$\phi \in \BR$};

\end{tikzpicture}
\end{center}
$(\BS_4)$ $\overline{F} = F$.
\end{definition}

If $\C F := (X, F)$ is a Bishop space, then $\C F^* := (X, F^*)$ is the Bishop space of 
bounded\index{$\C F^*$}\index{$F^*$} elements of $F$. The constant functions
$\Const(X)$ is the \textit{trivial}
topology on $X$, while $\D F(X)$ is the \textit{discrete} topology on $X$. Clearly, if $F$ is a topology on $X$,
then $\Const(X) \subseteq F \subseteq \D F(X)$, and the set of its bounded elements
$F^{*}$ is also a topology on $X$. It is straightforward to see that the pair $\C R := (\Real, \BR)$ is a
Bishop space, which we call the \textit{Bishop space of reals}. If $X$ is a metric space, the set 
$C_{p}(X)$ of all weakly continuous functions of 
type $X \rightarrow \Real$, as it is defined in~\cite{BB85}, p.76, is the set of pointwise continuous ones. 
It is easy to see that the pair $\mathcal{W}(X) = (X, C_{p}(X))$ 
is Bishop space. Bishop calls $C_{p}(X)$ the weak topology on $X$, but here we avoid this term, since in~\cite{Pe15} 
we use this term for the Bishop topology that corresponds to the weak topology of open sets,
and we call $C_{p}(X)$ the \textit{pointwise} topology on $X$. If $X$ is a compact metric space, 
the set $C_{u}(X)$ of all uniformly continuous
functions of type $X \rightarrow \mathbb{R}$ is a topology, called by Bishop the 
\textit{uniform} topology on $X$. We call $\mathcal{U}(X) = (X, C_{u}(X))$ the uniform space. 
If $X$ is a locally compact metric space, the set $\Bic(X)$ of 
Bishop continuous functions from $X$ to $\Real$ i.e., uniformly continuous on 
every\footnote{As in the case of $\BR$, it seems that this definition requires quantification over the
power set of $X$ i.e., 
$$\BX(f) \TOT \forall_{B \in \mathcal{P}(X)}(\mbox{bounded}(B) 
\To f_{|B} \ \mbox{is uniformly continuous}).$$ 
A bounded subset $B$ of an inhabited metric space $X$ is a triplet 
$(B, x_{0}, M)$, where $x_{0} \in X, B \subseteq X$, and $M > 0$ is a bound for $B \cup \{x_{0}\}$.
To avoid such a quantification, if $x_{0}$ inhabits $X$, then for every bounded subset
$(B, x_{0}{'}, M)$ of $X$ we have that there is some $n \in \Nat$ such that $n > 0$ and 
$B \subseteq [d_{x_{0}} \leq \overline{n}] = \{x \in X \mid d(x_{0}, x) \leq n\}$. 
If $x \in B$, then $d(x, x_{0}) \leq d(x, x_{0}{'}) + d(x_{0}{'}, x_{0}) \leq M + d(x_{0}{'}, x_{0})$, therefore
$x \in [d_{x_{0}} \leq \overline{n}]$, for some $n > M + d(x_{0}{'}, x_{0})$. Hence, 
$$\BX(f) \TOT \forall_{n \in \Nat}(f_{|[d_{x_{0}} \leq \overline{n}]} \ \mbox{is uniformly continuous}),$$
since $[d_{x_{0}} \leq \overline{n}] = \{x \in d(x_0, x) \leq n\}$ is trivially a bounded subset of $X$.}
bounded subset of $X$, is a Bishop topology on $X$.

A Bishop topology $F$ is a ring and a lattice; since $|\id_{\Real}| \in \Bic(\Real)$, where $\id_{\Real}$ is the 
identity function on $\Real$, by BS$_{3}$ we get that if $f \in F$ then $|f| \in F$. 
By BS$_{2}$ and BS$_{3}$, and using the following equalities  
\[ f{\cdot}g = \frac{(f + g)^{2} - f^{2} - g^{2}}{2} \in F,\]
\[ f \vee g = \max\{f, g\} = \frac{f + g + |f - g|}{2} \in F, \]  
\[ f \wedge g = \min\{f, g\} = \frac{f + g - |f - g|}{2} \in F,\]
we get similarly that if $f, g \in F$, then $f{\cdot}g, f \vee g, f \wedge g \in F$.
Turning the definitional clauses of a Bishop topology into inductive rules, Bishop defined in~\cite{Bi67}, p.~72,
the least topology including a given subbase $F_{0}$. This inductive definition, which 
is also found in~\cite{BB85}, p.~78, is crucial to the definition of new Bishop topologies from given ones.

\begin{definition}\label{def: bishop}
The category of Bishop spaces $\Bis$ is 
the subcategory of $\Aff(\Set, \Real)$ with objects pairs $(X, F)$ such that $F \subseteq \D F(X)$ is a Bishop 
topology on $X$. 
\end{definition}

Consequently, if $\C F := (X, F)$ and $\C G = (Y, G)$ are Bishop spaces, a function $h: X \rightarrow Y$ is a morphism from 
$\C F$ to $\C G$ in $\Bis$, which is called
a \textit{Bishop morphism}, if $\forall_{g \in G}(g \circ h \in F)$ 
\begin{center}
\begin{tikzpicture}

\node (E) at (0,0) {$X$};
\node[right=of E] (F) {$Y$};
\node[below=of F] (A) {$\Real$.};

\draw[->] (E)--(F) node [midway,above] {$h$};
\draw[->] (E)--(A) node [midway,left] {$F \ni g \circ h \ $};
\draw[->] (F)--(A) node [midway,right] {$g \in G$};

\end{tikzpicture}
\end{center}
We denote by $\Mor(\C F, \C G)$\index{$\Mor(\C F, \C G)$} the set of Bishop morphisms 
from $\C F$ to $\C G$. As $F$ is an extensional subset of $\D F(X)$, $\Mor(\C F, \C G)$ is an
extensional subset of $\D F(X,Y)$. Similarly to $\Top$, the category $\Bis$ is not cartesian closed. 
The following Chu-representation of Bishop spaces is completely constructive, and its proof is equally simple to the 
proof of Proposition~\ref{prp: reprtop}.

\begin{proposition}[Chu representation of $\Bis$]\label{prp: reprbish}
The functor $E^{\Bis} \colon \Bis \to \Chu(\Set, \Real)$, defined by
$$E^{\Bis}_0(X, F) = (X, \ev_{\mathsmaller{X,F}}, F),$$
$$\ev_{\mathsmaller{X,F}} \colon X \times F \to \D R,$$ 
$$\ev_{\mathsmaller{X,F}}(x, f) = f(x),$$
$$E^{\Bis}_1\big(h \colon (X, F) \longrightarrow (Y, G)\big) = \big(h, h^*\big)
\colon (X, \ev_{\mathsmaller{X,F}}, F) \to (Y, \ev_{\mathsmaller{Y,G}}, G),$$
$$h^* \colon G \to F, \ \ \ \ h^*(g) = g \circ h,$$
is a strict Chu representation of $\Bis$ into $\Chu(\Set, \D R)$.
\end{proposition}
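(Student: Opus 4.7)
The plan is to follow the same pattern used for the Chu representation of $\Top$, but with the evaluation pairing $\ev_{X,F}(x, f) = f(x)$ in place of the membership pairing, and with the Bishop-morphism condition $g \circ h \in F$ playing the role of continuity. First I would check that $E^{\Bis}$ is well-defined on objects, which is immediate since $\ev_{X,F} \colon X \times F \to \Real$ is a set-theoretic map. The main thing to verify on arrows is that if $h \colon (X, F) \to (Y, G)$ is a Bishop morphism, then $(h, h^*)$ is genuinely a Chu transform from $(X, \ev_{X,F}, F)$ to $(Y, \ev_{Y,G}, G)$: one checks that for every $x \in X$ and $g \in G$,
\[\ev_{X,F}\big(x, h^*(g)\big) = (g \circ h)(x) = g(h(x)) = \ev_{Y,G}\big(h(x), g\big),\]
and that $h^*(g) = g \circ h$ indeed lies in $F$ by the defining clause of a Bishop morphism, so that $h^* \colon G \to F$ is a legitimate map of sets. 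Functoriality is routine: $\id_X^* = \id_F$ since $f \circ \id_X = f$, and $(k \circ h)^* = h^* \circ k^*$ since $g \circ (k \circ h) = (g \circ k) \circ h$, so composition in $\Chu(\Set, \Real)$ is respected.

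Next I would show that $E^{\Bis}$ is injective on objects and on arrows. Injectivity on objects is trivial, since the triplet $(X, \ev_{X,F}, F)$ recovers both the carrier set $X$ and the topology $F$. Injectivity on arrows (strictness) is equally immediate: the pair $(h, h^*)$ determines $h$ as its first component.

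The only step that requires any thought is \emph{fullness}. Suppose $(\phi^+, \phi^-) \colon (X, \ev_{X,F}, F) \to (Y, \ev_{Y,G}, G)$ is an arbitrary Chu transform in $\Chu(\Set, \Real)$; I would set $h := \phi^+ \colon X \to Y$ and prove that $h$ is a Bishop morphism with $h^* = \phi^-$. The commuting square of the Chu transform unfolds, for every $x \in X$ and $g \in G$, to
\[\phi^-(g)(x) = \ev_{X,F}\big(x, \phi^-(g)\big) = \ev_{Y,G}\big(\phi^+(x), g\big) = g(h(x)) = (g \circ h)(x),\]
so by extensionality of functions $\phi^-(g) = g \circ h$ in $\D F(X)$. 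Since $\phi^-(g) \in F$ by hypothesis on $\phi^-$, we obtain $g \circ h \in F$ for every $g \in G$, which is exactly the Bishop-morphism condition; and the same equation gives $h^* = \phi^-$, so $E^{\Bis}_1(h) = (\phi^+, \phi^-)$. This also confirms that there are no hidden obstacles coming from the extensionality requirement on Bishop topologies, because the equality $\phi^-(g) = g \circ h$ holds at every point and $F$ is an extensional subset of $\D F(X)$. Combining injectivity on objects, faithfulness, fullness and injectivity on arrows yields the strict Chu representation as claimed.
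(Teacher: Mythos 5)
Your proposal is correct and follows essentially the same route as the paper's own proof: verify the commuting square for $(h, h^*)$ using $\ev_{X,F}(x, g \circ h) = g(h(x))$, note injectivity on objects and arrows is immediate, and obtain fullness by unfolding the Chu-transform square to the pointwise identity $\phi^-(g) = g \circ \phi^+$, which simultaneously gives the Bishop-morphism condition $g \circ \phi^+ \in F$ and the equality $\phi^- = (\phi^+)^*$. Your additional remarks on functoriality and on the extensionality of $F$ are consistent with what the paper leaves implicit.
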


\begin{proof}
First we show that $\big(h, h^*\big)
\colon (X, \ev_{\mathsmaller{X,F}}, F) \to (Y, \ev_{\mathsmaller{Y,G}}, G)$ i.e., the following 
rectangle commutes
\begin{center}
\begin{tikzpicture}

\node (E) at (0,0) {$X \times G$};
\node[right=of E] (T) {};
\node[right=of T] (F) {$X \times F$};
\node[below=of E] (A) {$Y \times G$};
\node[below=of F] (S) {$\ \D R$};

\draw[->] (E)--(A) node [midway,left] {$\mathsmaller{h \times \id_{G}}$};
\draw[->] (F)--(S) node [midway,right] {$\ev_{\mathsmaller{X,F}}$};
\draw[->] (E)--(F) node [midway,above] {$\mathsmaller{\id_{X} \times h^*}$};
\draw[->] (A)--(S) node [midway,below] {$\mathsmaller{\ev_{\mathsmaller{Y,G}}}$};

\end{tikzpicture}
\end{center}
\begin{align*}
 \ev_{\mathsmaller{X,F}}\big((\id_{X} \times h^*)(x, g)\big) & = \ev_{\mathsmaller{X,F}}\big(x, g \circ h\big)\\
 & = g(h(x))\\
 & = \ev_{\mathsmaller{Y,G}}\big(h(x), g\big)\\
 & =  \ev_{\mathsmaller{Y,G}}\big((h \times \id_G)(x, g)\big).
\end{align*}
It is immediate to show that $E^{\Bis}$ is a functor, which is injective on objects and arrows. Next we show that
$E^{\Bis}$ is full. Let $\big(\phi^+, \phi^-\big) \colon 
(X, \ev_{\mathsmaller{X,F}}, F) \to (Y, \ev_{\mathsmaller{Y,G}}, G)$ i.e., $\phi^+ \colon X \to Y$ and 
$\phi^- \colon Y \to X$ such that the following rectangle commutes
\begin{center}
\begin{tikzpicture}

\node (E) at (0,0) {$X \times G$};
\node[right=of E] (T) {};
\node[right=of T] (F) {$X \times F$};
\node[below=of E] (A) {$Y \times G$};
\node[below=of F] (S) {$\D R$};

\draw[->] (E)--(A) node [midway,left] {$\mathsmaller{\phi^+ \times \id_{G}}$};
\draw[->] (F)--(S) node [midway,right] {$\ev_{\mathsmaller{X,F}}$};
\draw[->] (E)--(F) node [midway,above] {$\mathsmaller{\id_{X} \times \phi^-}$};
\draw[->] (A)--(S) node [midway,below] {$\ev_{\mathsmaller{Y,G}}$};

\end{tikzpicture}
\end{center}
\begin{align*}
 \ev_{\mathsmaller{X,F}}\big((\id_{X} \times \phi^-)(x, g)\big) & = \ev_{\mathsmaller{X,F}}\big(x, \phi^-(g)\big)\\
 & = \big[\phi^-(g)\big](x))\\
 & = g\big(\phi^+(x)\big)\\
 & = \ev_{\mathsmaller{Y,G}}\big(\phi^+(x), g\big)\\
 & =  \ev_{\mathsmaller{Y,G}}\big((\phi^+ \times \id_G)(x, g)\big).
\end{align*}
From the resulting equality $F \ni \phi^-(g) = g \circ \phi^+$, and since $g \in G$ is arbitrary,
we conclude that $\phi^+ \in \Mor(\C F, \C G)$. By the same equality we also get $\phi^- = \big(\phi^+\big)^*$, since, 
if $g \in G$, we have that
$$\big[\big(\phi^+\big)^*\big(g)](x) = g\big(\phi^+(x)\big) = \big[\phi^-(g)\big](x)).$$
Hence, $E^{\Bis}_1\big(\phi^+) = \big(\phi^+, \phi^-\big)$. 
\end{proof}

In~\cite{Pe15} the mapping $h^* = \big[E_1^{\Bis}(h)\big]^- \colon G \to F$ is the ring homomorphism 
induced by $h \in \Mor(\C F, \C G)$.
Let the Chu space $(X, \ev_{\mathsmaller{X,F}}, F)$, and by Definition~\ref{def: extsep} let $\widehat{\ev_{\mathsmaller{X,F}}} \colon X \to (F \to \Real)$
with $\widehat{\ev_{\mathsmaller{X,F}}}(x) = \widehat{x}$.
Consequently, the Chu space $(X, \ev_{\mathsmaller{X,F}}, F)$ 
is separable if and only if $F$ \textit{separates the points} of $X$:
\begin{align*}
 \widehat{x} =_{\D F(F, \Real)} \widehat{x{'}} & :\TOT \forall_{f \in F}\big(\widehat{x}(f) =_{\Real} \widehat{x{'}}(f)\big)\\
 & \TOT \forall_{f \in F}\big(f(x) =_{\Real} f(x{'})\big)\\
 & \TOT x =_X x{'}.
\end{align*}
If $\widecheck{\ev_{\mathsmaller{X,F}}} \colon F \to (X \to \Real)$
with $\widecheck{\ev_{\mathsmaller{X,F}}}(x) = \widecheck{f}$, then $(X, \ev_{\mathsmaller{X,F}}, F)$ 
is always extensional. Clearly, all these proofs concerning the Chu space $(X, \ev_{\mathsmaller{X,F}}, F)$ are constructive.

As in the case of the classical Chu representation of $\Top$, the Chu representation of $\Bis$ does not involve the 
special properties of a Bishop topology $F$ and it can be applied to other categories too.
The functor $C^{\Top} \colon \Top \to \Chu(\Set, \Real)$ defined by 
$$C^{\Top}_0(X, T) = (X, \ev_{X}, C(X)),$$
$$\ev_{X} \colon X \times C(X) \to \D R,$$ 
$$\ev_{X}(x, f) = f(x),$$
$$C^{\Top}_1\big(h \colon (X, T) \stackrel{\cnt} \longrightarrow (Y, S)\big) = \big(h, h^*\big)
\colon (X, \ev_{X}, C(X)) \to (Y, \ev_{Y}, C(Y)),$$
$$h^* \colon C(Y) \to C(X), \ \ \ \ h^*(g) = g \circ h,$$
is only an embedding of $\Top$ into $\Chu(\Set, \D R)$. To show that $C^{\Top}$ is full, one needs to show that if 
$\big(\phi^+, \phi^-\big) \colon (X, \ev_{X}, C(X)) \to (Y, \ev_{Y}, C(Y))$, then $\phi^+ \in C(X, Y)$. What we can show
only is that $\phi^-(g) = g \circ \phi^+ \in C(X)$, for every $g \in C(Y)$, something which does not imply, in general, that 
$\phi^+ \in C(X, Y)$. 
One can show that $\phi^+ \in C(X, Y)$, if $Y$ is completely regular i.e., a Hausdorff space $Y$ such that every closed
set $F$ and a point $y \notin F$ are separated by an element of $C(Y)$. Let $\crTop$ be the full subcategory of completely
regular topological spaces. It is not a coincidence that such a result holds (classically), as one can show classically that the 
canonical topology of open sets induced by some Bishop topology is completely regular. From the point of view of 
the theory of rings of continuous functions, the restriction to $\crTop$ is not a loss of generality, as 
for every topological space $X$ there is a completely regular space $\rho X$ such that the ring $C(X)$ is isomorphic to 
$C(\rho X)$. Actually, $\crTop$ is a reflective subcategory of $\Top$ (see~\cite{He68} and~\cite{Wa74}), as for every 
topological space $(X, T)$ there is a completely regular space $(\rho X, \rho T)$ and a continuous surjection 
$\tau_X \colon X \to \rho X$ such that for every completely
regular space $(Y, S)$ and continuous function $f \colon X \to Y$ there is a unique continuous function 
$\rho f \colon \rho X \to Y$ such that the following triangle commutes
\begin{center}
\begin{tikzpicture}

\node (E) at (0,0) {$X$};
\node[right=of E] (T) {};
\node[right=of T] (F) {$\rho X$};
\node[below=of F] (A) {$Y$.};

\draw[->] (E)--(F) node [midway,above] {$\tau_X$};
\draw[->] (F)--(A) node [midway,right] {$\rho f$};
\draw[->] (E)--(A) node [midway,left] {$f \ \ $};

\end{tikzpicture}
\end{center}

\begin{proposition}[Chu representation of $\crTop$]\label{prp: chureprcrtop}
 The functor $E^{\crTop} \colon \crTop \to \Chu(\Set, \Real)$, where 
$$E^{\crTop}_0(X, T) = (X, \ev_{X}, C(X)),$$
$$\ev_{X} \colon X \times C(X) \to \D R,$$ 
$$\ev_{X}(x, f) = f(x),$$
$$E^{\crTop}_1\big(h \colon (X, T) \stackrel{\cnt} \longrightarrow (Y, S)\big) = \big(h, h^*\big)
\colon (X, \ev_{X}, C(X)) \to (Y, \ev_{Y}, C(Y)),$$
$$h^* \colon C(Y) \to C(X), \ \ \ \ h^*(g) = g \circ h,$$
is a strict representation of $\crTop$ into $\Chu(\Set, \Real)$.   
\end{proposition}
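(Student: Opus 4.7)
The plan is to follow the pattern of the proof of Proposition~\ref{prp: reprbish} almost verbatim, with complete regularity entering only at the two points where the ring of continuous real-valued functions has to be used to recover topological information.

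First I would verify well-definedness: given a continuous $h \colon (X,T) \to (Y,S)$, the pair $(h, h^*)$ with $h^*(g) = g \circ h$ satisfies the defining Chu square by the pointwise calculation $\ev_X(x, g \circ h) = g(h(x)) = \ev_Y(h(x), g)$ that appears in Proposition~\ref{prp: reprbish}, so $E^{\crTop}_1(h)$ is a morphism in $\Chu(\Set, \Real)$. Functoriality (preservation of identities and composition) and injectivity on arrows are then immediate, since the first component of $E^{\crTop}_1(h)$ is $h$ itself.

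For injectivity on objects, suppose $E^{\crTop}_0(X,T) = E^{\crTop}_0(Y,S)$; then $X = Y$ as sets and the two rings of continuous real-valued functions agree, $C_T(X) = C_S(X)$. Since $T$ and $S$ are both completely regular, each coincides with the initial (coarsest) topology on the common carrier that makes every element of its ring of continuous functions continuous; since these rings are equal, the induced initial topologies coincide, hence $T = S$.

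The main obstacle, as foreshadowed in the remark preceding the proposition, is fullness. Given a Chu morphism $(\phi^+, \phi^-) \colon (X, \ev_X, C(X)) \to (Y, \ev_Y, C(Y))$, pointwise evaluation of the defining rectangle, exactly as in Proposition~\ref{prp: reprbish}, forces $\phi^-(g) = g \circ \phi^+$ for every $g \in C(Y)$, and in particular $g \circ \phi^+ \in C(X)$ for every $g \in C(Y)$. The step to upgrade is continuity of $\phi^+$ itself, and this is precisely where complete regularity of $Y$ enters: the topology of $Y$ equals the initial topology generated by $C(Y)$, so a map into $Y$ is continuous if and only if its post-composition with every element of $C(Y)$ is continuous. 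Hence $\phi^+ \in C(X,Y)$, and the equality $\phi^- = (\phi^+)^*$ then follows directly from $\phi^-(g) = g \circ \phi^+$, giving $E^{\crTop}_1(\phi^+) = (\phi^+, \phi^-)$ and completing the proof of fullness.
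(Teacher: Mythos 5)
Your proposal is correct, and it is slightly more complete than the paper's own proof, which dispatches everything except fullness with ``it suffices to show that $\phi^+ \in C(X,Y)$'' and in particular does not spell out injectivity on objects; your observation that complete regularity forces $T = \tau(C_T(X))$, so that the topology is recovered from the ring, is exactly the right justification for that step. Where you genuinely diverge is in the key fullness lemma, namely that $g \circ \phi^+ \in C(X)$ for all $g \in C(Y)$ implies $\phi^+$ continuous. You derive this from the universal property of the initial topology, using $S = \tau(C(Y))$: preimages of the subbasic open sets $g^{-1}(U)$ are $(g \circ \phi^+)^{-1}(U)$, hence open. The paper instead works with closed sets: it invokes the characterization (Gillman--Jerison, p.~38) that complete regularity is equivalent to the zero sets $\zeta(g) = \{y \mid g(y)=0\}$ forming a base for the closed sets of $Y$, and computes $(\phi^+)^{-1}(\zeta(g)) = \zeta(g \circ \phi^+)$, which is closed. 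The two arguments are equivalent in substance --- both are standard facts from the same two pages of Gillman--Jerison, and the paper itself states the $\tau(C(Y)) = S$ characterization immediately after the proposition --- but your initial-topology route generalizes a bit more transparently to the subsequent remark that any $C \subseteq C(Y)$ with $\tau(C) = S$ suffices to test continuity, whereas the paper's zero-set computation has the small advantage of exhibiting the preimage of a zero set as again a zero set, which is the form in which the fact is quoted later.
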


\begin{proof}
It suffices to show that $\phi^+ \in C(X, Y)$. A Hausdorff space is completely regular if and only if the family 
$$Z(X) = \{\zeta(f) \mid f \in C(X)\}, \ \ \ \zeta(f) = \{x \in X \mid f(x) = 0\},$$ 
of zero sets of $X$ is a base for the closed sets of $X$ i.e., every closed set in $X$ is the intersection of a family 
of zero sets of $X$ (see~\cite{GJ60}, p.~38). As
\begin{align*}
 \big(\phi^+\big)^{-1}\big(\zeta(g)\big) & = \{x \in X \mid \phi^+(x) \in \zeta(f)\}\\
 & = \{x \in X \mid g(\phi^+(x)) = 0\}\\
 & = \zeta(g \circ \phi^+),
\end{align*}
and $g \circ \phi^+ \in C(X)$, we conclude that $\big(\phi^+\big)^{-1}\big(\zeta(g)\big)$ is closed in $X$, hence $\phi^+$ 
is continuous.
\end{proof}

If $(X, T)$ is a topological space a subset $C \subseteq C(X)$ determines the topology $T$, if the weak topology of $C$
i.e., the smallest topology $\tau(C)$ that turns all elements of $C$ into continuous functions, is equal to $T$. If $(X, T)$
is Hausdorff, then $(X, T)$ is completely regular if and only if $\tau(C(X)) = T$ (see~\cite{GJ60}, p.~40). By the argument 
in the proof of Proposition~\ref{prp: chureprcrtop} one shows (see~\cite{GJ60}, p.~40) that if $C \subseteq C(Y)$ with
$\tau(C) = S$, then a function $\phi^+ \colon (X, T) \to (Y, S)$ is continuous if and only if $g \circ \phi^+ \in C(X)$,
for every $g \in C$. A generalisation of the proof of Proposition~\ref{prp: reprbish} follows next. Its proof is identical to the proof of Proposition~\ref{prp: reprbish}.

\begin{proposition}[Chu representation of $\Aff(\Set, X)$]\label{prp: churepraff} 
If $X$ is a set, the rule $(A, F) \mapsto (A, \ev_{\mathsmaller{A,F}}, F)$ defines a strict Chu representation of $\Aff(\Set, X)$
into $\Chu(\Set, X)$.
\end{proposition}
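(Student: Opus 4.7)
The plan is to imitate verbatim the proof of Proposition~\ref{prp: reprbish}, since nothing in that argument actually used the Bishop-topology axioms $(\BS_1)$--$(\BS_4)$; it only used that $\Bis$ is a subcategory of $\Aff(\Set,\Real)$, i.e., that morphisms $h \colon (X,F)\to(Y,G)$ satisfy $g\circ h\in F$ for every $g\in G$. Accordingly, define $E \colon \Aff(\Set,X)\to\Chu(\Set,X)$ on objects by $E_0(A,F)=(A,\ev_{\mathsmaller{A,F}},F)$ with $\ev_{\mathsmaller{A,F}}(a,f)=f(a)$, and on a morphism $h \colon (A,F)\to(B,G)$ by $E_1(h)=(h,h^{*})$ where $h^{*}\colon G\to F$ is given by $h^{*}(g)=g\circ h$.

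The first thing I would check is that $h^{*}$ is well-defined with codomain $F$; this is exactly the defining condition of a morphism in $\Aff(\Set,X)$, so no work is needed beyond unfolding definitions. The commutativity of the Chu square
\begin{center}
\begin{tikzpicture}
\node (E) at (0,0) {$A\times G$};
\node[right=of E] (T) {};
\node[right=of T] (F) {$A\times F$};
\node[below=of E] (P) {$B\times G$};
\node[below=of F] (S) {$X$};
\draw[->] (E)--(P) node [midway,left] {$\mathsmaller{h\times \id_{G}}$};
\draw[->] (F)--(S) node [midway,right] {$\ev_{\mathsmaller{A,F}}$};
\draw[->] (E)--(F) node [midway,above] {$\mathsmaller{\id_{A}\times h^{*}}$};
\draw[->] (P)--(S) node [midway,below] {$\ev_{\mathsmaller{B,G}}$};
\end{tikzpicture}
\end{center}
reduces to the identity $g(h(a))=(g\circ h)(a)$, which is the same chain of equalities appearing in the proof of Proposition~\ref{prp: reprbish}. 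Functoriality ($(h'\circ h)^{*}=h^{*}\circ (h')^{*}$ and $\id_{A}^{*}=\id_{F}$) is immediate from the pointwise definition of $h^{*}$.

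For the verification that $E$ is a strict Chu representation, I would proceed as follows. Injectivity on objects is trivial, since both $A$ and $F$ can be read off directly from $(A,\ev_{\mathsmaller{A,F}},F)$. Injectivity on arrows is equally trivial, as $h$ is the first component of $E_1(h)$. Fullness is the only point that deserves a line of computation: given a Chu morphism $(\phi^{+},\phi^{-})\colon (A,\ev_{\mathsmaller{A,F}},F)\to (B,\ev_{\mathsmaller{B,G}},G)$, the commutativity of its defining square yields, for every $a\in A$ and $g\in G$,
\begin{equation*}
\bigl[\phi^{-}(g)\bigr](a)=\ev_{\mathsmaller{A,F}}\bigl(a,\phi^{-}(g)\bigr)=\ev_{\mathsmaller{B,G}}\bigl(\phi^{+}(a),g\bigr)=g\bigl(\phi^{+}(a)\bigr),
\end{equation*}
so $F\ni \phi^{-}(g)=g\circ\phi^{+}$ for every $g\in G$. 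This shows simultaneously that $\phi^{+}$ is a morphism in $\Aff(\Set,X)$ and that $\phi^{-}=(\phi^{+})^{*}$, whence $E_{1}(\phi^{+})=(\phi^{+},\phi^{-})$.

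I do not anticipate any genuine obstacle here: the proof of Proposition~\ref{prp: reprbish} already treats the general affine situation, and the present statement is in fact a slight strengthening of the observation made there that the argument does not rely on the special structure of a Bishop topology. The only mild point to keep in mind is extensionality of subsets of $\D F(A,X)$, which ensures that the equalities $\phi^{-}(g)=g\circ\phi^{+}$ are equalities in $F$ as required.
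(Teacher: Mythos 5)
Your proposal is correct and is essentially the paper's own argument: the paper states that the proof of Proposition~\ref{prp: churepraff} is identical to that of Proposition~\ref{prp: reprbish}, which is exactly the verbatim transfer you carry out. Your added remarks on well-definedness of $h^{*}$ and on extensionality are consistent with the paper's treatment and do not change the route.
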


%
%
%

\section{A Chu representation of the category of subsets}
\label{sec: chusubsets}

Next we present the categorical in spirit notion of subset of a (Bishop) set.

\begin{definition}\label{def: subset}
Let $(X, =_X)$ be a set. A subset\index{subset} of $X$ is a pair $(A, i_A^X)$\index{$(A, i_A^X)$}, where $(A, =_A)$ is a set and 
$i_A^X \colon A \hookrightarrow X$ is an embedding (i.e., an injection) of $A$ into $X$.
If $(A, i_A^X)$ and $(B, i_B^X)$ are subsets of $X$, then $A$ is a \textit{subset}\index{subset} of $B$, in symbols 
$(A, i_A^X) \subseteq (B, i_B^X)$\index{$(A, i_A^X) \subseteq (B, i_B^X)$}, or simpler 
$A \subseteq B$\index{$A \subseteq B$},
if there is $f \colon A \to B$ such that the following diagram commutes
\begin{center}
\begin{tikzpicture}

\node (E) at (0,0) {$A$};
\node[right=of E] (B) {};
\node[right=of B] (F) {$B$};
\node[below=of B] (A) {$X$.};

\draw[->] (E)--(F) node [midway,above] {$f$};
\draw[right hook->] (E)--(A) node [midway,left] {$i_A^X \ $};
\draw[left hook->] (F)--(A) node [midway,right] {$\ i_B^X$};

\end{tikzpicture}
\end{center} 
In this case we also write $f \colon A \subseteq B$. Usually we write $A$ instead of $(A, i_A^X)$.
The totality of the subsets of $X$ is the \textit{powerset}\index{powerset} $\C P(X)$\index{$\C P(X)$} of $X$,
and it is equipped with the equality
\[ (A, i_A^X) =_{\C P(X)} (B, i_B^X) :\TOT A \subseteq B \ \& \ B \subseteq A. \]
If $f \colon A \subseteq B$ and $g \colon B \subseteq A$, we write $(f, g) \colon A =_{\C P(X)} B$. The category $\B {\C P}(X)$ of subsets of $X$ has objects the subsets of $X$ and morphisms functions $f \colon A \to B$ as above.
\end{definition}

Since the membership condition for $\C P(X)$ requires quantification over the open-ended totality $\D V_0$ of predicative sets (see~\cite{Pe20}, chapter 2), the totality $\C P(X)$ is a \textit{proper class}. It is immediate to show that $f \colon A \subseteq B$ is an embedding, and that the category $\B {\C P}(X)$ is thin.

\begin{proposition}[Chu-representation of $\B {\C P}(X)$]\label{prp: bishopchu1}
 If $(X, =_X)$ is a set, the functor
 $E^X \colon \C P(X) \to \Chu(\Set, X)$, defined by
 $$E_0^X\big(A, i_{A}^X\big) = \big(A, I_{A}^X, \D 1\big),$$
 $$I_A^X \colon A \times \D 1 \to X, \ \ \ I_A^X(a, 0) = i_{A}^X(a); \ \ \ a \in A,$$
 $$E_1^X\big(f \colon \big(A, i_{A}^X\big) \to \big(B, i_{B}^X\big)\big) = (f, \id_{\D 1}) \colon 
 \big(A, I_{A}^X, \D 1\big) \to \big(B, I_{B}^X, \D 1\big),$$
 is a strict Chu representation of $\C P(X)$ into $\Chu(\Set, X)$.
\end{proposition}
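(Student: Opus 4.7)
The plan is to verify, in turn, the four conditions that make $E^X$ a strict Chu representation in the sense of Definition~\ref{def: churepresent}: that it is well-defined on arrows, that it is a functor, that it is strict (injective on both objects and arrows) and faithful, and finally that it is full. Only the last of these requires any genuine work; the rest is bookkeeping about the singleton $\D 1$.

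First I would check that $E_1^X(f)$ actually is a Chu transform when $f \colon A \subseteq B$. The square of Definition~\ref{def: chu} to be verified has top and bottom edges $\id_A \times \id_{\D 1}$ and $f \times \id_{\D 1}$, and right/bottom edges $I_A^X$ and $I_B^X$. Evaluating both composites at a pair $(a,0)$ reduces the commutativity to $i_B^X(f(a)) = i_A^X(a)$, which is precisely the defining relation $i_B^X \circ f = i_A^X$ witnessing $f \colon A \subseteq B$. Functoriality is then immediate from the composition rule of Definition~\ref{def: chu}: $E_1^X(\id_A) = (\id_A, \id_{\D 1}) = 1_{(A, I_A^X, \D 1)}$, and for composable $f, g$ in $\B {\C P}(X)$ one has $E_1^X(g \circ f) = (g \circ f, \id_{\D 1}) = (g, \id_{\D 1}) \circ (f, \id_{\D 1})$, since $\id_{\D 1} \circ \id_{\D 1} = \id_{\D 1}$.

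For strictness and faithfulness, note that the object $(A, I_A^X, \D 1)$ determines $A$ and $i_A^X$, since $i_A^X(a) = I_A^X(a,0)$; hence $E_0^X$ is injective on objects. On arrows, $(f, \id_{\D 1}) = (g, \id_{\D 1})$ forces $f = g$, so $E_1^X$ is both injective and faithful.

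The only point of substance is fullness. Let $(\phi^+, \phi^-) \colon (A, I_A^X, \D 1) \to (B, I_B^X, \D 1)$ be an arbitrary Chu transform. Since $\D 1$ is a singleton, $\phi^- \colon \D 1 \to \D 1$ must coincide with $\id_{\D 1}$. Commutativity of the defining Chu square, evaluated at $(a, 0)$, then yields $i_B^X(\phi^+(a)) = I_B^X(\phi^+(a), 0) = I_A^X(a, 0) = i_A^X(a)$ for every $a \in A$, which is exactly the witness making $\phi^+$ a morphism $A \subseteq B$ in $\B{\C P}(X)$; hence $E_1^X(\phi^+) = (\phi^+, \id_{\D 1}) = (\phi^+, \phi^-)$. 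I do not expect any obstacle: the whole argument essentially transports the definitional equation of a subset inclusion across the tautological bijection between functions $A \times \D 1 \to X$ and functions $A \to X$.
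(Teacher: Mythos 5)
Your proof is correct and follows essentially the same route as the paper's: both reduce the Chu-square condition for $(f,\id_{\D 1})$ to the triangle identity $i_B^X \circ f = i_A^X$ witnessing $f \colon A \subseteq B$, and both obtain fullness by observing that $\phi^-$ is forced to be $\id_{\D 1}$ and reversing that reduction. The only difference is presentational -- you compute pointwise at $(a,0)$ where the paper argues diagrammatically -- which changes nothing of substance.
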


\begin{proof}
If $f \colon \big(A, i_{A}^X\big) \to \big(B, i_{B}^X\big)$, then by the commutativity of the following triangle we get 
the commutativity of the following rectangle
\begin{center}
\begin{tikzpicture}

\node (E) at (0,0) {$A \times \D 1$};
\node[right=of E] (T) {};
\node[right=of T] (F) {$A \times \D 1$};
\node[below=of E] (A) {$B \times \D 1$};
\node[below=of F] (S) {$X$};
\node[right=of F] (K) {$A$};
\node[right=of K] (L) {};
\node[right=of L] (M) {$B$};
\node[below=of L] (N) {$X$};

\draw[->] (E)--(A) node [midway,left] {$\mathsmaller{f \times \id_{\D 1}}$};
\draw[->] (F)--(S) node [midway,right] {$I_{A}^X$};
\draw[->] (E)--(F) node [midway,above] {$\mathsmaller{\id_{A} \times \id_{\D 1}}$};
\draw[->] (A)--(S) node [midway,below] {$I_{B}^X$};
\draw[left hook->,bend left] (K) to node [midway,above] {$f$} (M);
\draw[left hook->] (M) to node [midway,right] {$ \ i_{B}^X$} (N);
\draw[right hook->] (K)--(N) node [midway,left] {$i_{A}^X \ \ $};

\end{tikzpicture}
\end{center} 
thus $E_1^X(f) \colon  \big(A, I_{A}^X, \D 1\big) \to \big(B, I_{B}^X, \D 1\big)$. 
Clearly, $E^X$ is a functor injective on objects and arrows, hence an embedding. Moreover, by the commutativity of
the above rectangle we
get the commutativity of the above triangle. Hence, if $(f, \id_{\D 1}) \colon 
 \big(A, I_{A}^X, \D 1\big) \to \big(B, I_{B}^X, \D 1\big)$ in $\Chu(\Set, X)$, then $f \colon A \subseteq B$ in $\C P(X)$,
and hence $E$ is full.   
\end{proof}


The category of subsets of $X$ and its Chu representation are generalised to a $\CCC$ $\C C$ as follows.

\begin{definition}\label{def: sub}
 The category $\Sub(\C C, \gamma)$ of subobjects of $\gamma$ has 
 objects monomorphisms of $\C C$ with codomain $\gamma$ and a morphism $f \colon i \to j$, where $i \colon a \eto \gamma$ 
 and $j \colon b \eto \gamma$ is a a morphism $f \colon a \to b$ such that the following triangle commutes 
 \begin{center}
\begin{tikzpicture}

\node (E) at (0,0) {$a$};
\node[right=of E] (B) {};
\node[right=of B] (F) {$b$};
\node[below=of B] (A) {$\gamma$.};

\draw[->] (E)--(F) node [midway,above] {$f$};
\draw[right hook->] (E)--(A) node [midway,left] {$i \ $};
\draw[left hook->] (F)--(A) node [midway,right] {$ \ j$};

\end{tikzpicture}
\end{center} 
\end{definition}

It is immediate to show that $f$ is a monomorphism and that $\Sub(\C C, \gamma)$ is thin.

\begin{proposition}[Chu representation of $\Sub(\C C, \gamma)$]\label{prp: subrepr}
 The functor
 $E^{\Sub(\C C, \gamma)} \colon \Sub(\C C, \gamma) \to \Chu(\C C, \gamma)$, defined by
 $$E_0^{\Sub(\C C, \gamma)}\big(i \colon a \eto \gamma\big) = \big(a, i \circ \pr_a, 1\big),$$
 \begin{center}
\begin{tikzpicture}

\node (E) at (0,0) {$a \times 1$};
\node[right=of E] (T) {$a$};
\node[right=of T] (F) {$x$};

\draw[right hook->] (E)--(T) node [midway,above] {$\pr_a$};
\draw[right hook->] (T)--(F) node [midway,above] {$i$};

\end{tikzpicture}
\end{center}
 $$E_1^{\Sub(\C C, \gamma)}\big(f \colon i \to j\big) = (f, 1_1) \colon \big(a, i \circ \pr_a, 1\big) \to 
 \big(b, j \circ \pr_b, 1\big),$$
 is a strict Chu representation of $\Sub(\C C, \gamma)$ into $\Chu(\C C, \gamma)$.
\end{proposition}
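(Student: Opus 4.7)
The proof will proceed in close analogy with Proposition~\ref{prp: bishopchu1}, replacing the terminal set $\D 1$ by the terminal object $1$ of $\C C$ and the inclusions $i_A^X$ by the monomorphisms $i \colon a \eto \gamma$. The plan has four parts: well-definedness of $E_1^{\Sub(\C C, \gamma)}$ on arrows, functoriality, injectivity on objects and arrows (strictness), and fullness.

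First I will verify that $E_1^{\Sub(\C C, \gamma)}(f)$ is indeed a Chu transform. Given $f \colon i \to j$ in $\Sub(\C C, \gamma)$, the defining triangle gives $i = j \circ f$, and the square to be checked is
\begin{center}
\begin{tikzpicture}
\node (E) at (0,0) {$\mathsmaller{a \times 1}$};
\node[right=of E] (T) {};
\node[right=of T] (F) {$\mathsmaller{a \times 1}$};
\node[below=of E] (A) {$\mathsmaller{b \times 1}$};
\node[below=of F] (S) {$\mathlarger{\gamma}.$};
\draw[->] (E)--(A) node [midway,left] {$\mathsmaller{f \times 1_1}$};
\draw[->] (F)--(S) node [midway,right] {$\mathsmaller{i \circ \pr_a}$};
\draw[->] (E)--(F) node [midway,above] {$\mathsmaller{1_a \times 1_1}$};
\draw[->] (A)--(S) node [midway,below] {$\mathsmaller{j \circ \pr_b}$};
\end{tikzpicture}
\end{center}
The top arrow is the identity, the bottom-right composite is $(j \circ \pr_b) \circ (f \times 1_1) = j \circ f \circ \pr_a = i \circ \pr_a$, so the square commutes. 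Functoriality is immediate since the second component is always $1_1$ and composition of Chu transforms is coordinatewise.

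For strictness, injectivity on objects follows because $(a, i \circ \pr_a, 1) = (a', i' \circ \pr_{a'}, 1)$ forces $a = a'$ and $i \circ \pr_a = i' \circ \pr_a$; since $\pr_a \colon a \times 1 \to a$ is an isomorphism (hence an epimorphism), we recover $i = i'$. Injectivity on arrows is trivial: $(f, 1_1) = (g, 1_1)$ gives $f = g$.

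The main point, and the only non-routine step, is fullness. Suppose $(\phi^+, \phi^-) \colon (a, i \circ \pr_a, 1) \to (b, j \circ \pr_b, 1)$ is a Chu transform. Since $1$ is terminal, $\phi^- \colon 1 \to 1$ must equal $1_1$. Commutativity of the Chu square then reads
\[ i \circ \pr_a = (i \circ \pr_a) \circ (1_a \times 1_1) = (j \circ \pr_b) \circ (\phi^+ \times 1_1) = j \circ \phi^+ \circ \pr_a, \]
and cancelling the epimorphism $\pr_a$ on the right yields $i = j \circ \phi^+$. Thus $\phi^+$ is a morphism $i \to j$ in $\Sub(\C C, \gamma)$ with $E_1^{\Sub(\C C, \gamma)}(\phi^+) = (\phi^+, 1_1)$, and the representation is full. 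The one subtlety to be careful about is that $\pr_a \colon a \times 1 \to a$ really is an isomorphism in any category with a terminal object, so the cancellation step is justified; everything else is a direct transcription of the proof of Proposition~\ref{prp: bishopchu1}.
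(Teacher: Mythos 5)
Your proof is correct and follows essentially the same route as the paper's: verify the Chu square via the naturality identity $\pr_b \circ (f \times 1_1) = f \circ \pr_a$, cancel $\pr_a$ for injectivity on objects, and use $\phi^- = 1_1$ plus the same cancellation for fullness. Your justification of the cancellation step via $\pr_a$ being an epimorphism is in fact the cleaner one (the paper invokes "mono" where right-cancellability is what is needed, though both hold since $\pr_a$ is an isomorphism).
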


\begin{proof}
The morphism $\pr_a$ is an iso, hence a mono. To show that $E_1^{\Sub(\C C, \gamma)}(f)
\colon  \big(a, i \circ \pr_a, 1\big) \to  \big(b, j \circ \pr_b, 1\big)$, we show that the following diagram commutes
\begin{center}
\begin{tikzpicture}

\node (E) at (0,0) {$\mathsmaller{a \times 1}$};
\node[right=of E] (T) {};
\node[right=of T] (F) {$\mathsmaller{a \times 1}$};
\node[below=of E] (A) {$\mathsmaller{b \times 1}$};
\node[below=of F] (S) {$\gamma$};

\draw[->] (E)--(A) node [midway,left] {$\mathsmaller{f \times 1_{1}}$};
\draw[->] (F)--(S) node [midway,right] {$\mathsmaller{i \circ \pr_a}$};
\draw[->] (E)--(F) node [midway,above] {$\mathsmaller{1_{a \times 1}}$};
\draw[->] (A)--(S) node [midway,below] {$\mathsmaller{j \circ \pr_b}$};

\end{tikzpicture}
\end{center}
%
%
%
%
%
%
$$i \circ \pr_a = (j \circ f) \circ \pr_a = j \circ (f \circ \pr_a)
= j \circ [\pr_b \circ (f \times 1_1)]
= (j \circ \pr_b) \circ (f^+ \times 1_1),$$
as the equality $f \circ \pr_a = \pr_b \circ (f \times 1_1)$ follows from the definition of $f \times 1_1$
\begin{center}
\begin{tikzpicture}

\node (E) at (0,0) {$b \ $};
\node[right=of E] (L) {};
\node[right=of L] (F) {$b \times 1 $};
\node[right=of F] (N) {};
\node[right=of N] (A) {$ \ 1$.};
\node[above=of F] (K) {};
\node[above=of K] (B) {$a \times 1$};
\node[above=of L] (T) {$a$};
\node[above=of N] (S) {$ 1$};

\draw[->] (B)--(T) node [midway,left] {$\pr_a \ $};
\draw[->] (F)--(E) node [midway,above] {$\pr_b$};
\draw[->] (T)--(E) node [midway,left] {$f \ \ $};
\draw[->] (B)--(S) node [midway,right] {$ \ \pr_1  $};
\draw[->,dashed] (B)--(F) node [midway,right] {$\mathsmaller{f \times 1_1}$};
\draw[->] (F)--(A) node [midway,above] {$\pr_1$};
\draw[->] (S)--(A) node [midway,right] {$\ \ 1_1$};

\end{tikzpicture}
\end{center}
If $\big(a, i \circ \pr_a, 1\big) = \big(b, j \circ \pr_b, 1\big)$, then $a = b$, and $i \circ \pr_a = j \circ p_a$.
As $\pr_a$ is a mono, we get $i = j$, and hence $E^{\Sub(\C C, \gamma)}$ is injective on objects. It is trivially 
injective on arrows. To show that it is full, let 
$(\phi^+, \phi^-) \colon  \big(a, i \circ \pr_a, 1\big) \to 
\big(b, j \circ \pr_b, 1\big)$. Clearly, $\phi^- = 1_1$. By the previous equalities we get
$i \circ \pr_a = (j \circ \phi^+) 
\circ \pr_a$, and since $\pr_a$ is a mono, 
$j \circ \phi^+ = i$ i.e., $\phi^+ \colon i \to j$ in 
$\Sub(\C C, \gamma)$.
\end{proof}

\section{A Chu representation of the category of complemented subsets}
\label{sec: chucompl}

%

\begin{definition}\label{def: apartness}
Let $(X, =_X)$ be a set. An \textit{inequality}\index{inequality} on $X$, or an 
\textit{apartness relation}\index{apartness relation} on $X$, is a relation $x \neq_X y$\index{$x \neq_X y$} such that 
the following conditions are satisfied:\\[1mm]
$(\Ap_1)$ $\forall_{x, y \in X}\big(x =_X y \ \& \ x \neq_X y \To \bot \big)$.\\
$(\Ap_2)$ $\forall_{x, y \in X}\big(x \neq_X y \To y \neq_X x\big)$.\\
$(\Ap_3)$ $\forall_{x, y \in X}\big(x \neq_X y \To \forall_{z \in X}(z \neq_X x \ \vee \ z \neq_X y)\big)$.\\[1mm]
We write $(X, =_X, \neq_X)$\index{$(X, =_X, \neq_X)$} to denote the equality-inequality structure of a
set\index{equality-inequality structure of a set} $X$. If $\big(A, i_A^X\big)$ is a subset of $X$, the canonical inequality on $A$ induced by $\neq_X$ is defined by 
$$a \neq_A a{'} :\TOT i_A^X(a) \neq_X i_A^X(a{'}),$$
for every $a, a{'} \in A$.
If $(Y, =_Y, \neq_Y)$ is a set with inequality, a function $f \colon X \to Y$ is called strongly extensional, if $f(x) \neq_Y f(x{'}) \To
x \neq_X x{'}$, for every $x, x{'} \in X$. 
\end{definition}

\begin{remark}\label{rem: apartness1}
An inequality relation $x \neq_X y$ is extensional on $X \times X$.
\end{remark}

\begin{proof}
If $x, y \in X$ such that $x \neq y$, and if $x{'}, y{'} \in X$ such that $x{'} =_X x$ 
and $y{'} =_X y$, we show that
$x{'} \neq y{'}$. By $(\Ap_3)$ we get $x{'} \neq x$, which is excluded from $(\Ap_1)$, or $x{'} \neq y$,
which has to be the case. Hence, $y{'} \neq x{'}$, or $y{'} \neq y$. Since the last option is excluded similarly,
we get $y{'} \neq x{'}$, hence $x{'} \neq y{'}$.
\end{proof}

An inequality on a set $X$ induces a positively defined notion of disjointness of subsets of $X$.

\begin{definition}\label{def: apartsubsets}
Let $(X, =_X, \neq_X)$ be a set, and $(A, i_A^X), (B, i_B^X) \subseteq X$. We say that 
$A$ and $B$ are disjoint with respect to $\neq_X$\index{disjoint subsets}, in symbols 
$A \Disj_{\mathsmaller{\neq_X}} B$\index{$A \Disj_{\mathsmaller{\neq}} B$}, if
\[ A \underset{\mathsmaller{\mathsmaller{\mathsmaller{\neq_X}}}} \Disj B : 
\TOT \forall_{a \in A}\forall_{b \in B}\big(i_A^X(a) \neq_X i_B^X(b) \big). \]
If $\neq_X$ is clear from the context, we only write $A \Disj_{_X} B$ or even $A \Disj B$\index{$A \Disj B$}.
\end{definition}

Clearly, if $A \Disj B$, then $A \cap B$ is not inhabited. 
The positive disjointness of subsets of $X$ induces the notion of a complemented subset of $X$, and
the negative notion of the complement of a set is avoided. We use bold letters to denote a complemented subset of a set.

\begin{definition}\label{def: complementedsubset}
A \textit{complemented subset}\index{complemented subset} of a set $(X, =_X, \neq_X)$ is a pair
$\B A := (A^1, A^0)$\index{$\B A := (A^1, A^0)$}, 
where $(A^1, i_{A^1}^X)$ and $(A^0, i_{A^0}^X)$ are subsets of $X$ such that $A^1 \Disj A^0$.
If 
$\Dm(\B A) := A^1 \cup A^0$\index{$\Dm(\B A)$} is the domain of $\B A$\index{domain of a complemented subset},
the 
\textit{indicator function}\index{indicator function of a complemented subset}, or
\textit{characteristic function}\index{characteristic function of a complemented subset}, of
$\B A$ is the operation $\chi_{\B A} : \Dm(\B A) \sto \D 2$ defined by
\[ \chi_{\B A}(x) := \left\{ \begin{array}{ll}
                 1   &\mbox{, $x \in A^1$}\\
                 0             &\mbox{, $x \in A^0$.}
                 \end{array}
          \right. \] 
Let $x \in \B A :\TOT x \in A^1$ and $x \notin \B A :\TOT x \in A^0$. If $\B A, \B B$ are complemented subsets of $X$, let
\[ \B A \subseteq \B B : \TOT A^1 \subseteq B^1 \ \& \ B^0 \subseteq A^0,  \]
Let $\C P^{\Disj}(X)$\index{$\C P^{\Disj}(X)$}\index{$\B A \subseteq \B B$} be their totality, 
equipped with the equality 
$\B A =_{\C P^{\mathsmaller{\Disj}} (X)} \B B : \TOT \B A \subseteq \B B \ \& \ \B B \subseteq \B A$.

\end{definition}

Clearly, $ \B A =_{\C P^{\mathsmaller{\Disj}} (X)} \B B \TOT A^1 =_{\C P(X)} B^1 \ \& \ A^0 =_{\C P(X)} B^0$. Notice that
if $f_1 \colon A^1 \subseteq B^1$ and $f_0 \colon B^0 \subseteq A^0$, then $f_1, f_0$ are strongly extensional functions.
E.g., if $f_1(a_1) \neq_{B^1} f_1(a_1{'})$, for some $a_1, a_1{'} \in A^1$, then from the definition of the canonical
inequality $\neq_{B^1}$ this means that $i_{B^1}^X\big(f_1(a_1)\big) \neq_{X} i_{B^1}^X\big(f_1(a_1{'})\big)$. By the 
extensionality of $\neq_X$ we get $i_{A^1}^X(a_1) \neq i_{B^1}^X(a_1{'}) :\TOT a_1 \neq_{A^1} a_1{'}$.

\begin{definition}\label{def: complcat}
If $(X, =_X, \neq_X)$ is a set, the category $\B {\C P}^{\Disj}(X)$ has objects the complemented subsets of $X$ and a morphism $f \colon \B A \to \B B$ is a pair $f = (f_1, f_0) \colon \B A \subseteq \B B$ i.e., 
$f_1 \colon A^1 \subseteq B^1$ and $f_0 \colon B^0 \subseteq A^0$. The unit morphism $1_{\B A}$ of $\B A$ is the 
pair $(\id_{A^1}, \id_{A^0})$, and if $g = (g_1, g_0) \colon \B B \subseteq \B C$, then
$g \circ f := (g_1 \circ f_1, f_0 \circ g_0)$ 
\begin{center}
\resizebox{7cm}{!}{%
\begin{tikzpicture}

\node (E) at (0,0) {$A^1$};
\node[right=of E] (F) {$B^1$};
\node[below=of F] (C) {};
\node[below=of C] (A) {$X$};
\node[right=of F] (K) {$C^1$};

\node[right=of K] (L) {$C^0$};
\node[right=of L] (M) {$B^0$};
\node[below=of M] (N) {};
\node[below=of N] (O) {$X$};
\node[right=of M] (P) {$A^0$};

\draw[left hook->,bend left] (E) to node [midway,above] {$f_1$} (F);
\draw[right hook->] (F) to node [midway,left] {$i_{B^1}^X$} (A);
\draw[right hook->] (E)--(A) node [midway,left] {$i_{A^1}^X \ $};
\draw[left hook->] (K)--(A) node [midway,right] {$ \ i_{C^1}^X$};
\draw[left hook->,bend left] (F) to node [midway,above] {$g_1$} (K);
\draw[->,bend left=60] (E) to node [midway,below] {} (K) ;

\draw[left hook->,bend left] (L) to node [midway,above] {$g_0$} (M);
\draw[right hook->] (M) to node [midway,left] {$i_{B^0}^X$} (O);
\draw[right hook->] (P)--(O) node [midway,right] {$i_{A^0}^X \ $};
\draw[left hook->] (L)--(O) node [midway,left] {$ \ i_{C^0}^X$};
\draw[left hook->,bend left] (M) to node [midway,above] {$f_0$} (P);
\draw[->,bend left=60] (L) to node [midway,below] {} (P) ;

\end{tikzpicture}
}
\end{center} 
\end{definition}

%
%
%

Clearly, the category $\B {\C P}^{\Disj}(X)$ is thin.

\begin{proposition}[Chu representation of $\B {\C P}^{\Disj}(X)$]\label{prp: bishopchu2}
 If $(X, =_X, \neq_X)$ is a set with an inequality, then the functor
 $E^{\B X} \colon \C P^{\Disj}(X) \to \Chu(\Set, X \times X)$, defined by
 $$E_0^{\B X}\big(A^1, i_{A^1}^X, A^0, i_{A^0}^X\big) = \big(A^1, i_{A^1}^X \times i_{A^0}^X, A^0\big),$$
 $$E_1^{\B X}\big((f^1, f^0) \colon \B A \to \B B\big) = (f^1, f^0) \colon 
 \big(A^1, i_{A^1}^X \times i_{A^0}^X, A^0\big) \to \big(B^1, i_{B^1}^X \times i_{B^0}^X, B^0\big),$$
 is a strict Chu representation of $\C P^{\Disj}(X)$ into $\Chu(\Set, X \times X)$.
\end{proposition}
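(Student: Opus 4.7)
The plan is to mimic the verification pattern used for $\B {\C P}(X)$ in Proposition~\ref{prp: bishopchu1}, but using $X \times X$ as the dualizing object so that the positive and negative parts of a complemented subset can be tracked simultaneously via the two projections of $X \times X$. I would check in sequence: well-definedness of $E^{\B X}$ on arrows, preservation of identities and composition, injectivity on objects and on arrows, and finally fullness.

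For well-definedness, given $(f^1, f^0) \colon \B A \to \B B$, I need the Chu square
\begin{center}
\begin{tikzpicture}
\node (E) at (0,0) {$\mathsmaller{A^1 \times B^0}$};
\node[right=of E] (T) {};
\node[right=of T] (F) {$\mathsmaller{A^1 \times A^0}$};
\node[below=of E] (A) {$\mathsmaller{B^1 \times B^0}$};
\node[below=of F] (S) {$\mathsmaller{X \times X}$};
\draw[->] (E)--(A) node [midway,left] {$\mathsmaller{f^1 \times 1_{B^0}}$};
\draw[->] (F)--(S) node [midway,right] {$\mathsmaller{i_{A^1}^X \times i_{A^0}^X}$};
\draw[->] (E)--(F) node [midway,above] {$\mathsmaller{1_{A^1} \times f^0}$};
\draw[->] (A)--(S) node [midway,below] {$\mathsmaller{i_{B^1}^X \times i_{B^0}^X}$};
\end{tikzpicture}
\end{center}
to commute. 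Using equation~(\ref{eq: cmc1}), both composites equal $(i_{B^1}^X \circ f^1) \times (i_{A^0}^X \circ f^0) = i_{A^1}^X \times i_{B^0}^X$, where the last equality uses the two triangle identities $i_{B^1}^X \circ f^1 = i_{A^1}^X$ and $i_{A^0}^X \circ f^0 = i_{B^0}^X$ that come from $f^1 \colon A^1 \subseteq B^1$ and $f^0 \colon B^0 \subseteq A^0$. Preservation of identities is immediate, and preservation of composition follows because the composition law $(g_1, g_0) \circ (f_1, f_0) = (g_1 \circ f_1, f_0 \circ g_0)$ in $\B {\C P}^{\Disj}(X)$ matches on the nose the Chu composition $(\theta^+ \circ \phi^+, \phi^- \circ \theta^-)$.

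Injectivity on objects follows by post-composing $i_{A^1}^X \times i_{A^0}^X$ with the two projections of $X \times X$: this recovers $i_{A^1}^X$ and $i_{A^0}^X$ individually, so equality of Chu spaces forces $A^1 = B^1$ and $A^0 = B^0$ together with the coinciding embeddings. Injectivity on arrows is trivial because $E_1^{\B X}$ is literally the identity on the underlying pair $(f^1, f^0)$. For fullness, given a Chu transform $(\phi^+, \phi^-) \colon E_0^{\B X}(\B A) \to E_0^{\B X}(\B B)$, I project its defining commutative square onto each coordinate of $X \times X$. The first projection yields $i_{A^1}^X(a^1) = i_{B^1}^X(\phi^+(a^1))$ for every $(a^1,b^0) \in A^1 \times B^0$ (and hence for every $a^1$), giving $\phi^+ \colon A^1 \subseteq B^1$; the second projection yields $i_{A^0}^X(\phi^-(b^0)) = i_{B^0}^X(b^0)$, giving $\phi^- \colon B^0 \subseteq A^0$. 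Therefore $(\phi^+, \phi^-) \colon \B A \subseteq \B B$ and $E_1^{\B X}(\phi^+,\phi^-) = (\phi^+,\phi^-)$.

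The only point that requires genuine care, rather than mechanical bookkeeping, is the use of the product $X \times X$ as dualizing object: it is precisely the separate projections onto the two copies of $X$ that allow one to untangle the positive and negative embeddings of a complemented subset both in the well-definedness and in the fullness step. Apart from that observation, the verification is entirely analogous to Proposition~\ref{prp: bishopchu1}, and no use is made of the inequality $\neq_X$ beyond what is already needed to define $\C P^{\Disj}(X)$ as a category.
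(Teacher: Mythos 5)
Your proposal is correct and follows essentially the same route as the paper's proof: the same Chu square, the same two triangle identities $i_{B^1}^X \circ f^1 = i_{A^1}^X$ and $i_{A^0}^X \circ f^0 = i_{B^0}^X$, and the same observation that projecting the square onto the two coordinates of $X \times X$ recovers exactly those triangles, which gives fullness. The only differences are presentational --- you verify commutativity via the bifunctor identity~(\ref{eq: cmc1}) where the paper computes elementwise, and you spell out injectivity on objects, which the paper leaves as ``clear''.
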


\begin{proof}
  Let $i_{A^1}^X \times i_{A^0}^X 
 \colon A^1 \times A^0 \to X \times X$ where $\big[i_{A^1}^X \times i_{A^0}^X\big](a^1, a^0) = \big(i_{A^1}^X(a^1), 
 i_{A^0}^X(a^0)\big)$, for every $(a^1, a^0) \in A^1 \times A^0$. If $(f^1, f^0) \colon \B A \to \B B$, then 
 $(f^1, f^0) \colon \big(A^1, i_{A^1}^X \times i_{A^0}^X, A^0\big) \to \big(B^1, i_{B^1}^X \times i_{B^0}^X, B^0\big)$ 
 is a morphism in $\Chu(\Set, X \times X)$, as the commutativity of the following rectangle
\begin{center}
\begin{tikzpicture}

\node (E) at (0,0) {$\mathsmaller{A^1 \times B^0}$};
\node[right=of E] (T) {};
\node[right=of T] (F) {$\mathsmaller{A^1 \times A^0}$};
\node[below=of E] (A) {$\mathsmaller{B^1 \times B^0}$};
\node[below=of F] (S) {$\mathsmaller{X \times X}$};

\draw[->] (E)--(A) node [midway,left] {$\mathsmaller{f^1 \times \id_{B^0}}$};
\draw[->] (F)--(S) node [midway,right] {$\mathsmaller{i_{A^1}^X \times i_{A^0}^X}$};
\draw[->] (E)--(F) node [midway,above] {$\mathsmaller{\id_{A^1} \times f^0}$};
\draw[->] (A)--(S) node [midway,below] {$\mathsmaller{i_{B^1}^X \times i_{B^0}^X}$};

\end{tikzpicture}
\end{center}
follows from the commutativity of the following two triangles   
\begin{center}
\begin{tikzpicture}

\node (E) at (0,0) {$A^1$};
\node[right=of E] (B) {};
\node[right=of B] (F) {$B^1$};
\node[below=of B] (A) {$X$};

\node[right=of F] (L) {$B^0$};
\node[right=of L] (T) {};
\node[right=of T] (M) {$A^0$};
\node[below=of T] (O) {$X$};

\draw[left hook->,bend left] (E) to node [midway,above] {$f^1$} (F);
\draw[left hook->] (F) to node [midway,right] {$ \ i_{B^1}^X$} (A);
\draw[right hook->] (E)--(A) node [midway,left] {$i_{A^1}^X \ $};

\draw[left hook->,bend left] (L) to node [midway,above] {$f^0$} (M);
\draw[right hook->] (L) to node [midway,left] {$i_{B^0}^X$} (O);
\draw[left hook->] (M)--(O) node [midway,right] {$ \ i_{A^0}^X \ $};

\end{tikzpicture}
\end{center}
\begin{align*}
\big[\big(i_{A^1}^X \times i_{A^0}^X\big) \circ \big(1_{A^1} \times f^0\big)\big](a^1, b^0) & = \big[i_{A^1}^X \times 
i_{A^0}^X\big](a^1, f^0(b^0))\\
& = \big(i_{A^1}^X(a^1), i_{A^0}^X(f^0(b^0))\big)\\
& = \big(i_{B^1}^X(f^1(a^1)), i_{B^0}^X(b^0)\big)\\
& = \big[i_{B^1}^X \times i_{B^0}^X\big]\big(f^1(a^1), b^0\big)\\
& = \big[\big(i_{B^1}^X \times i_{B^0}^X\big) \circ \big(f^1 \times 1_{B^0}\big)\big](a^1, b^0).
\end{align*}
Clearly, $E^{\B X}$ is a functor injective on objects and arrows, hence an embedding. It is also full, 
as the above equalities also show that
the commutativity of the above rectangle implies the commutativity of the above triangles.
hence, if $(f^1, f^0) \colon \big(A^1,
i_{A^1}^X \times i_{A^0}^X, A^0\big) \to \big(B^1, i_{B^1}^X \times i_{B^0}^X, B^0\big)$ in $\Chu(\Set, X \times X)$, then 
$(f^1, f^0) \colon \B A \to \B B$. 
\end{proof}

Consequently, one can identify $\B {\C P}^{\Disj}(X)$ with the full subcategory
 of $\Chu(\Set, X \times X)$ with objects triplets $\big(A^1, i_{A^1}^X \times i_{A^0}^X, A^0\big)$, where
 $i_{A^1}^X \colon A^1 \eto X$ and $i_{A^0}^X \colon A^0 \eto X$ such that 
 $\forall_{a^1 \in A^1}\forall_{a^0 \in A^0}\big(i_{A^1}^X(a^1) \neq_X i_{A^0}^X(a^0)\big)$. Notice that the Chu category
 $\Chu(\Set, X \times X)$ ``captures'' the behavior of the morphisms in $\B {\C P}^{\Disj}(X)$, but not the positive 
 disjointness of $A^1, A^0$, as there are objects $(A, f, B)$ of $\Chu(\Set, X \times X)$, with $A \between B$; 
e.g., we may consider the triplet $(X, \id_{X \times X}, X)$.


\section{The generalised Chu construction over a ccc $\C C$ and an endofunctor}
\label{sec: chu2}

In order to Chu-represent categories like the category of predicates $\Pred$ and the category of complemented predicates 
$\Pred^{\neq}$, defined in the following two sections, respectively, we generalise the Chu construction. 
Actually, it is this embedding that shaped the ``right'' definition of the category $\Pred^{\neq}$, as, at first sight, more than one possible options exist.

\begin{definition}[The Chu construction over a ccc $\C C$ and an endofunctor]\label{def: chugen}
Let $\Gamma \colon \C C \to \C C$ an endofunctor on $\C C$.
The Chu category $\Chu(\C C, \Gamma)$ over $\C C$ and 
$\Gamma$ has objects quadruples $(x ; a, f, b)$, with $x, a, b \in C_0$ and $f \colon a \times b \to \Gamma_0(x) \in C_1$.
A morphism $\phi \colon (x; a, f, b) \to (y; c, g, d)$ in  $\Chu(\C C, \Gamma)$, or a Chu transform,
is a triplet $\phi = \big(\phi^0, \phi^+, \phi^-\big)$, where $\phi^0 \colon x \to y$, $\phi^+ \colon a \to c$ and $\phi^- 
\colon d \to b$ are in $C_1$ such that the following diagram commutes
\begin{center}
\begin{tikzpicture}

\node (E) at (0,0) {$\mathsmaller{a \times d} \  $};
\node[right=of E] (T) {};
\node[right=of T] (F) {$ \ \mathsmaller{a \times b}$};
\node[below=of F] (L) {$\mathsmaller{\Gamma_0(x)}$};
\node[below=of L] (S) {$\mathsmaller{\Gamma_0(y)}.$};
\node[left=of S] (K) {};
\node[left=of K] (A) {$\mathsmaller{c \times d}$};

\draw[->] (E)--(A) node [midway,left] {$\mathsmaller{\phi^+ \times 1_d}$};
\draw[->] (F)--(L) node [midway,right] {$\mathsmaller{f}$};
\draw[->] (E)--(F) node [midway,above] {$\mathsmaller{1_a \times \phi^-}$};
\draw[->] (L)--(S) node [midway,right] {$\mathsmaller{\Gamma_1(\phi^0)}$};
\draw[->] (A)--(S) node [midway,below] {$\mathsmaller{g}$};

\end{tikzpicture}
\end{center}
If $\theta = \big(\theta^0, \theta^+, \theta^-\big) \colon (y; c, g, d) \to (z; i, h, j)$, let
$\theta \circ \phi = \big(\theta^0 \circ \phi^0, \theta^+ \circ \phi^+, \phi^- \circ \theta^-\big)$
\begin{center}
\begin{tikzpicture}

\node (E) at (0,0) {$\mathsmaller{a \times d} \  $};
\node[right=of E] (T) {};
\node[right=of T] (F) {$ \ \mathsmaller{a \times b}$};
\node[below=of F] (L) {$\mathsmaller{\Gamma_0(x)}$};
\node[below=of L] (S) {$\mathsmaller{\Gamma_0(y)}$};
\node[left=of S] (K) {};
\node[left=of K] (A) {$\mathsmaller{c \times d}$};
\node[below=of S] (T) {$\mathsmaller{\Gamma_0(z)}$};
\node[below=of T] (U) {$  \mathsmaller{i \times j}.$};
\node[left=of U] (P) {};
\node[left=of P] (W) {$\mathsmaller{c \times j} \ $};
\node[right=of S] (C) {};
\node[right=of C] (X) {$\mathsmaller{a \times j}$};

\draw[->] (E)--(A) node [midway,left] {$\mathsmaller{\phi^+ \times 1_d}$};
\draw[->] (F)--(L) node [midway,right] {$\mathsmaller{f}$};
\draw[->] (E)--(F) node [midway,above] {$\mathsmaller{1_a \times \phi^-}$};
\draw[->] (L)--(S) node [midway,left] {$\mathsmaller{\Gamma_1(\phi^0)}$};
\draw[->] (A)--(S) node [midway,below] {$\mathsmaller{g}$};
\draw[->] (S)--(T) node [midway,left] {$\mathsmaller{\Gamma_1(\theta^0)}$};
\draw[->] (W)--(A) node [midway,left] {$\mathsmaller{1_c \times \theta^-}$};
\draw[->] (W)--(U) node [midway,below] {$\mathsmaller{\theta^+ \times 1_j}$};
\draw[->] (U)--(T) node [midway,right] {$\mathsmaller{h}$};
\draw[->] (X)--(F) node [midway,right] {$\mathsmaller{1_a \times (\phi^- \circ \theta^-)}$};
\draw[->] (X)--(U) node [midway,right] {$\mathsmaller{(\theta^+ \circ \phi^+) \times 1_j}$};
\draw[->,bend left=40] (L) to node [midway,right] {$\mathsmaller{\Gamma_1(\theta^0 \circ \phi^0)}$} (T);

\end{tikzpicture}
\end{center}
Moreover, $1_{(x; a, f, b)} = (1_x, 1_a, 1_b)$
\begin{center}
\begin{tikzpicture}

\node (E) at (0,0) {$\mathsmaller{a \times b} \ $};
\node[right=of E] (T) {};
\node[right=of T] (F) {$ \ \mathsmaller{a \times b}$};
\node[below=of F] (L) {$\mathsmaller{\Gamma_0(x)}$};
\node[below=of L] (S) {$\mathsmaller{\Gamma_0(x)}.$};
\node[left=of S] (K) {};
\node[left=of K] (A) {$\mathsmaller{a \times b}$};

\draw[->] (E)--(A) node [midway,left] {$\mathsmaller{1_a \times 1_b}$};
\draw[->] (F)--(L) node [midway,right] {$\mathsmaller{f}$};
\draw[->] (E)--(F) node [midway,above] {$\mathsmaller{1_a \times 1_b}$};
\draw[->] (L)--(S) node [midway,right] {$\mathsmaller{\Gamma_1(1_x) = 1_{\Gamma_0(x)}}$};
\draw[->] (A)--(S) node [midway,below] {$\mathsmaller{f}$};

\end{tikzpicture}
\end{center}
\end{definition}
To show that composition in $\Chu, (\C C, \Gamma)$ is well-defined, we show the commutativity of the above triangle
as follows:
\begin{align*}
 \Gamma_1(\theta^0 \circ \phi^0) \circ f \circ [1_a \times (\phi^- \circ \theta^-)] & =
  \Gamma_1(\theta^0) \circ \Gamma_1(\phi^0) \circ f \circ [1_a \times (\phi^- \circ \theta^-)]\\
 & \stackrel{(\ref{eq: cmc2})} = \Gamma_1(\theta^0) \circ \big[\Gamma_1(\phi^0) \circ f \circ (1_a \times \phi^-)\big] 
 \circ (1_a \times \theta^-)\\
 & = \Gamma_1(\theta^0) \circ  g \circ (\phi^+ \times 1_d) \circ (1_a \times \theta^-)\\
 & \stackrel{(\ref{eq: cmc3})} =  \Gamma_1(\theta^0) \circ  g \circ (1_c \times \theta^-) \circ (\phi^+ \times 1_j)\\
 & = \big[\Gamma_1(\theta^0) \circ  g \circ (1_c \times \theta^-)\big] \circ (\phi^+ \times 1_j)\\
 & = \big[h \circ (\theta^+ \times 1_j)\big] \circ (\phi^+ \times 1_j)\\
 & \stackrel{(\ref{eq: cmc4})} = h \circ \big[(\theta^+ \circ \phi^+) \times 1_j\big].
\end{align*}

\begin{proposition}\label{prp: relation}
 Let $\Gamma^{\gamma} \colon \C C \to \C C$ the constant 
 endofunctor with value $\gamma$ i.e., $\Gamma^{\gamma}_0(a) = \gamma$, for every $a \in C_0$, and $\Gamma^{\gamma}_1(f)
 = 1_{\gamma}$, for every $f \in C_1$. The functor $E^{\gamma} \colon \Chu(\C C, \gamma) \to \Chu(\C C, \Gamma^{\gamma})$, 
 defined by 
 $$E^{\gamma}_0(a, f, b) = (\gamma; a, f, b),$$
 $$E^{\gamma}_1\big(\big(\phi^+, \phi^-\big) \colon (a, f, b) \to (c, g, d)\big) = \big(1_{\gamma}, \phi^+, \phi^+\big) 
 \colon (\gamma; a, f, b) \to (\gamma; c, g, d),$$
 is an embedding of $\Chu(\C C, \gamma)$ into $\Chu(\C C, \Gamma^{\gamma})$. 
\end{proposition}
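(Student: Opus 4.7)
The plan is to verify, in order, that the rule $E^{\gamma}$ (i) sends objects of $\Chu(\C C, \gamma)$ to objects of $\Chu(\C C, \Gamma^{\gamma})$, (ii) sends Chu transforms to Chu transforms, (iii) preserves identities and composition, and (iv) is injective on both objects and arrows. By Definition~\ref{def: churepresent} this suffices for $E^{\gamma}$ to be an embedding.

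For (i), since $\Gamma^{\gamma}_0(\gamma) = \gamma$, the morphism $f \colon a \times b \to \gamma$ is literally a morphism $a \times b \to \Gamma^{\gamma}_0(\gamma)$, so $(\gamma;a,f,b)$ meets the object specification of $\Chu(\C C, \Gamma^{\gamma})$ in Definition~\ref{def: chugen}. For (ii), given $(\phi^+,\phi^-) \colon (a,f,b) \to (c,g,d)$, the Chu-rectangle of Definition~\ref{def: chu} reads $g \circ (\phi^+ \times 1_d) = f \circ (1_a \times \phi^-)$. Using $\Gamma^{\gamma}_1(1_{\gamma}) = 1_{\gamma}$ (which is just the functoriality of $\Gamma^{\gamma}$ on identities), this equality is identical to
\[
g \circ (\phi^+ \times 1_d) \;=\; \Gamma^{\gamma}_1(1_{\gamma}) \circ f \circ (1_a \times \phi^-),
\]
which is the commutativity required by Definition~\ref{def: chugen} for $(1_{\gamma}, \phi^+, \phi^-)$ to be a Chu transform $(\gamma; a,f,b) \to (\gamma; c,g,d)$.

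For (iii), the identity $(1_a, 1_b)$ is sent to $(1_{\gamma}, 1_a, 1_b) = 1_{(\gamma; a,f,b)}$, and given $\theta = (\theta^+,\theta^-)$ and $\phi = (\phi^+, \phi^-)$ composable in $\Chu(\C C, \gamma)$, we have
\[
E^{\gamma}_1(\theta) \circ E^{\gamma}_1(\phi) = (1_{\gamma} \circ 1_{\gamma},\, \theta^+ \circ \phi^+,\, \phi^- \circ \theta^-) = (1_{\gamma}, \theta^+ \circ \phi^+, \phi^- \circ \theta^-) = E^{\gamma}_1(\theta \circ \phi).
\]
For (iv), if $E^{\gamma}_0(a,f,b) = E^{\gamma}_0(a',f',b')$ then $(\gamma; a,f,b) = (\gamma; a',f',b')$, yielding $a=a'$, $f=f'$, $b=b'$; and if $E^{\gamma}_1(\phi) = E^{\gamma}_1(\psi)$ then the second and third components of the triples coincide, giving $\phi = \psi$.

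No step presents a genuine obstacle; the verification is essentially bookkeeping once one notices that the hypothesis ``$\Gamma = \Gamma^{\gamma}$ is constant'' collapses the $\Gamma_1(\phi^0)$-edge of the generalised Chu diagram to $1_{\gamma}$. The only point worth flagging (though not part of the claim) is that $E^{\gamma}$ is \emph{not} generally full: a transform $(\gamma; a,f,b) \to (\gamma; c,g,d)$ in $\Chu(\C C, \Gamma^{\gamma})$ is allowed to have an arbitrary $\phi^0 \colon \gamma \to \gamma$ in its first slot, whereas $E^{\gamma}$ only hits those with $\phi^0 = 1_{\gamma}$. This is the reason the proposition asserts an embedding rather than a representation.
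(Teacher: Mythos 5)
Your proof is correct and follows essentially the same route as the paper's: the only substantive check is that the Chu rectangle for $(\phi^+,\phi^-)$ becomes the generalised Chu diagram once the edge $\Gamma^{\gamma}_1(\phi^0)$ collapses to $1_{\gamma}$, after which injectivity on objects and arrows is immediate; you merely spell out the functoriality bookkeeping that the paper leaves implicit, and you (rightly) read the third component of $E^{\gamma}_1(\phi^+,\phi^-)$ as $\phi^-$ rather than the $\phi^+$ appearing in the statement, which is a typo. Your closing remark that $E^{\gamma}$ fails to be full because the first slot of a transform in $\Chu(\C C,\Gamma^{\gamma})$ may be any $\phi^0\colon\gamma\to\gamma$ is accurate and explains why the proposition claims only an embedding.
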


\begin{proof}
To show that $E^{\gamma}$ is a functor, it suffices to show that $\big(1_{\gamma}, \phi^+, \phi^+\big) 
 \colon (\gamma; a, f, b) \to (\gamma; c, g, d)$. This follows from the fact that the commutativity of the 
 following upper inner diagram implies the commutativity of the following outer diagram
\begin{center}
\begin{tikzpicture}

\node (E) at (0,0) {$\mathsmaller{a \times d} \  $};
\node[right=of E] (T) {};
\node[right=of T] (F) {$ \ \mathsmaller{a \times b}$};
\node[below=of F] (L) {$\mathsmaller{\Gamma_0(x)}$};
\node[below=of L] (S) {$\mathsmaller{\Gamma_0(y)}.$};
\node[left=of S] (K) {};
\node[left=of K] (A) {$\mathsmaller{c \times d}$};

\draw[->] (E)--(A) node [midway,left] {$\mathsmaller{\phi^+ \times 1_d}$};
\draw[->] (F)--(L) node [midway,right] {$f$};
\draw[->] (E)--(F) node [midway,above] {$\mathsmaller{1_a \times \phi^-}$};
\draw[->] (L)--(S) node [midway,right] {$\mathsmaller{1_{\gamma}}$};
\draw[->] (A)--(S) node [midway,below] {$\mathsmaller{g}$};
\draw[->] (A)--(L) node [midway,left] {$\mathsmaller{g} \ $};

\end{tikzpicture}
\end{center}
Clearly, $E^{\gamma}$ is injective on objects and arrows, hence it is an embedding.
\end{proof}

\begin{proposition}[The generalised local Chu functor]\label{prp: intfunctor}
The rule $\Chu^{\C C} \colon \Fun(\C C, \C C) \to \Cat$ defined by
$$\Chu^{\C C}_0(\Gamma) = \Chu(\C C, \Gamma),$$
$$\Chu^{\C C}_1(\eta \colon \Gamma \To \Delta) \colon \Chu(\C C, \Gamma) \to \Chu(\C C, \Delta),$$
$$\big[\Chu^{\C C}_1(\eta)\big]_0(x; a, f, b) = (x; a, \eta_x \circ f, b),$$
\begin{center}
\begin{tikzpicture}

\node (E) at (0,0) {$\mathsmaller{a \times b} $};
\node[right=of E] (T) {};
\node[right=of T] (F) {$\mathsmaller{\Gamma_0(x)}$};
\node[below=of F] (L) {$\mathsmaller{\Delta_0(x)}$};

\draw[MyBlue,->] (E)--(F) node [midway,above] {$\mathsmaller{f}$};
\draw[MyBlue,->] (F)--(L) node [midway,right] {$\mathsmaller{\eta_x}$};
\draw[->] (E)--(L) node [midway,above] {};

\end{tikzpicture}
\end{center}
$$\big[\Chu^{\C C})\big]_1\big(\phi^0, \phi^+, \phi^-\big) = \big(\phi^0, \phi^+, \phi^-\big),$$
is a functor. Moroever, if $\eta_x \colon \Gamma_0(x) \eto \Delta_0(x)$ is a mono, for every $x \in C_0$, then 
$\Chu^{\C C}_1(\eta)$ is a full embedding of $\Chu(\C C, \Gamma)$ into $\Chu(\C C, \Delta)$.
\end{proposition}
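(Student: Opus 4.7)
My plan is to verify, in order, three things: that each $\Chu^{\C C}_1(\eta)$ is itself a functor from $\Chu(\C C, \Gamma)$ to $\Chu(\C C, \Delta)$, that the assignment $\eta \mapsto \Chu^{\C C}_1(\eta)$ respects identities and vertical composition of natural transformations, and finally that when every component $\eta_x$ is a monomorphism the resulting functor is a full embedding. The single ingredient that does all the real work is the naturality square of $\eta$: for every $u \colon x \to y$ in $C_1$ we have $\Delta_1(u) \circ \eta_x = \eta_y \circ \Gamma_1(u)$.

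The nontrivial part of the first point is showing that if $(\phi^0, \phi^+, \phi^-) \colon (x; a, f, b) \to (y; c, g, d)$ is a Chu transform in $\Chu(\C C, \Gamma)$, then the same triple is a Chu transform $(x; a, \eta_x \circ f, b) \to (y; c, \eta_y \circ g, d)$ in $\Chu(\C C, \Delta)$. Here I would compute
\begin{align*}
\Delta_1(\phi^0) \circ (\eta_x \circ f) \circ (1_a \times \phi^-)
 & = \eta_y \circ \Gamma_1(\phi^0) \circ f \circ (1_a \times \phi^-)\\
 & = \eta_y \circ g \circ (\phi^+ \times 1_d),
\end{align*}
where the first equality is naturality of $\eta$ at $\phi^0$ and the second is the hypothesis that $(\phi^0,\phi^+,\phi^-)$ is already a Chu transform in $\Chu(\C C, \Gamma)$. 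Preservation of identities and composition of Chu transforms by $\Chu^{\C C}_1(\eta)$ is immediate, because its action on morphisms is literally the identity on triples.

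For the second point, $\Chu^{\C C}_1(1_\Gamma)$ is the identity functor because $(1_\Gamma)_x = 1_{\Gamma_0(x)}$, so $(1_\Gamma)_x \circ f = f$ on objects and the morphism action is trivially the identity. Functoriality under composition reduces to the associativity $(\mu_x \circ \eta_x) \circ f = \mu_x \circ (\eta_x \circ f)$ at each object level, so $\Chu^{\C C}_1(\mu \circ \eta) = \Chu^{\C C}_1(\mu) \circ \Chu^{\C C}_1(\eta)$ on objects; on morphisms both sides act as the identity triple.

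For the third point, assume every $\eta_x$ is a monomorphism. Injectivity on objects is immediate: the equality $(x; a, \eta_x \circ f, b) = (y; c, \eta_y \circ g, d)$ gives $x = y$, $a = c$, $b = d$, and then $\eta_x \circ f = \eta_x \circ g$, from which $f = g$ by cancelling the mono $\eta_x$. Faithfulness is automatic because $\Chu^{\C C}_1(\eta)$ is the identity on morphism triples. The step where the monomorphism hypothesis is really needed, and the main (albeit small) obstacle, is fullness. Given an arbitrary Chu transform $(\phi^0, \phi^+, \phi^-) \colon (x; a, \eta_x \circ f, b) \to (y; c, \eta_y \circ g, d)$ in $\Chu(\C C, \Delta)$, its defining equality reads $\Delta_1(\phi^0) \circ \eta_x \circ f \circ (1_a \times \phi^-) = \eta_y \circ g \circ (\phi^+ \times 1_d)$; applying naturality to the left side rewrites it as $\eta_y \circ \Gamma_1(\phi^0) \circ f \circ (1_a \times \phi^-)$, and cancelling the mono $\eta_y$ from both sides yields exactly the commutativity of the diagram required for $(\phi^0, \phi^+, \phi^-)$ to be a Chu transform in $\Chu(\C C, \Gamma)$. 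Hence the triple comes from a morphism of $\Chu(\C C, \Gamma)$, completing the proof that $\Chu^{\C C}_1(\eta)$ is a full embedding.
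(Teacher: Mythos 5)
Your proof is correct and follows essentially the same route as the paper's: the naturality square $\Delta_1(\phi^0)\circ\eta_x=\eta_y\circ\Gamma_1(\phi^0)$ is used to show that post-composition with $\eta$ preserves Chu transforms, and the monomorphism hypothesis is used exactly where the paper uses it, to cancel $\eta_x$ for injectivity on objects and $\eta_y$ for fullness. Your explicit check that $\eta\mapsto\Chu^{\C C}_1(\eta)$ respects identity transformations and vertical composition is a welcome addition that the paper leaves implicit, but it does not change the argument.
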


\begin{proof}
To show that $\Chu_1^{\C C}$ is a functor, it suffices to show that if $\big(\phi^0, \phi^+, \phi^-\big) \colon  
(x; a, f, b) \to (y; c, g, d)$ in $\Chu(\C C, \Gamma)$, then $\big(\phi^0, \phi^+, \phi^-\big) \colon  
(x; a, \eta \circ f, b) \to (y; c, \eta_y \circ g, d)$ in $\Chu(\C C, \Delta)$. This follows from the fact that
commutativity of the following upper, inner diagram implies the commutativity of the following outer diagram
\begin{center}
\begin{tikzpicture}

\node (E) at (0,0) {$\mathsmaller{a \times d} \  $};
\node[right=of E] (U) {};
\node[right=of U] (T) {};
\node[right=of T] (F) {$ \mathsmaller{a \times b}$};
\node[below=of F] (L) {$\mathsmaller{\Gamma_0(x)}$};
\node[below=of L] (R) {$\mathsmaller{\Delta_0(x)}$};
\node[below=of R] (S) {$ \ \ \mathsmaller{\Delta_0(y)}$};
\node[left=of S] (K) {$ \ \mathsmaller{\Gamma_0(y)} \ $};
\node[left=of K] (A) {$\mathsmaller{c \times d} \ $};

\draw[->] (E)--(A) node [midway,left] {$\mathsmaller{\phi^+ \times 1_d}$};
\draw[->] (F)--(L) node [midway,right] {$\mathsmaller{f}$};
\draw[->] (L)--(R) node [midway,right] {$\mathsmaller{\eta_x}$};
\draw[->] (L)--(K) node [midway,left] {$\mathsmaller{\Gamma_1(\phi^0)} \ $};
\draw[->] (E)--(F) node [midway,above] {$\mathsmaller{1_a \times \phi^-}$};
\draw[->] (R)--(S) node [midway,right] {$\mathsmaller{\Delta_1(\phi^0)}$};
\draw[->] (A)--(K) node [midway,below] {$\mathsmaller{g}$};
\draw[->] (K)--(S) node [midway,below] {$\mathsmaller{\eta_y}$};

\end{tikzpicture}
\end{center}
\begin{align*}
\Delta_1(\phi^0) \circ \eta_x \circ f \circ (1_a \times \phi^-) & =  
\big[\Delta_1(\phi^0) \circ \eta_x\big] \circ f \circ (1_a \times \phi^-)\\
& = \eta_y \circ \big[\Gamma_1(\phi^0) \circ f \circ (1_a \times \phi^-)\big]\\ 
& = \eta_y \circ g \circ \big(\phi^+ \times 1_d\big). 
\end{align*}
If $\eta_x \colon \Gamma_0(x) \eto \Delta_0(x)$ is a mono, for every $x \in C_0$, then $\Chu^{\C C}_1(\eta)$ is
injective on objects, and since it is trivially injective on arrows, it is an embedding. In this case, $\Chu^{\C C}_1(\eta)$
is also full, as the commutativity of the above outer diagram implies the commutativity of the above, upper, inner diagram.
As $\eta_y$ is a mono, the resulted equality 
$$\eta_y \circ \big[\Gamma_1(\phi^0) \circ f \circ (1_a \times \phi^-)\big] = 
\eta_y \circ g \circ \big(\phi^+ \times 1_d\big)$$
implies the equality $\Gamma_1(\phi^0) \circ f \circ (1_a \times \phi^-) = g \circ \big(\phi^+ \times 1_d$. 
\end{proof}

\begin{definition}\label{def: genchurepresent}
Let $\C C, \C D$ be categories and $F \colon C \to \C D$ a functor. If $\C D$ is a generalised Chu category and 
$F$ is a representation, we call $F$ a generalised 
Chu representation. We call a generalised Chu representation $F$ strict, if $F$ is injective on arrows. 
 
\end{definition}

%
%

\section{The generalised global Chu functor}
\label{sec: genchufunctor}

The following fact is the generalised analogue to Lemma~\ref{lem: lemextfunctor}.

\begin{lemma}\label{lem: Lemextfunctor}
 Let $\C C, \C D$ be cartesian closed categories, $\Gamma \colon \C C \to \C C, \Delta \colon \C D \to \C D$, 
 $F \colon  \C C \to \C D$ such that 
 $F$ preserves products with $(F_{ab})_{a, b \in C_0}$ the canonical isomorphisms 
 of $F$, and let $\eta \colon F \circ \Gamma \To \Delta \circ F$
 \begin{center}
\begin{tikzpicture}

\node (E) at (0,0) {$\mathsmaller{\C C} $};
\node[right=of E] (S) {};
\node[right=of S] (T) {$\mathsmaller{\C D}$};
\node[below=of E] (M) {$\mathsmaller{\C C}$};
\node[below=of T] (N) {$\mathsmaller{\C D}$.};

\draw[->] (E)--(T) node [midway,above] {$\mathsmaller{F}$};
\draw[->] (M)--(N) node [midway,below] {$\mathsmaller{F}$};
\draw[->] (E)--(M) node [midway,left] {$\mathsmaller{\Gamma}$};
\draw[->] (T)--(N) node [midway,left] {$\mathsmaller{\Delta}$};
\draw[MyBlue,->,bend left=20] (E) to node [midway,left] {} (N);
\draw[MyBlue,->,bend right=20] (E) to node [midway,above] {$ \ \ \ \mathsmaller{\stackrel{\eta} \Too}$} (N);

\end{tikzpicture}
\end{center}
 The rule $F_* \colon \Chu(\C C, \Gamma) \to \Chu(\C D, \Delta)$, defined by
 $$(F_*)_0(x; a, f, b) = \big(F_0(x); F_0(a), \eta_x \circ F_1(f) \circ F_{ab}, F_0(b)\big)$$
 \begin{center}
\begin{tikzpicture}

\node (E) at (0,0) {$F_0(a) \times F_0(b)$};
\node[right=of E] (T) {$F_0(a \times b)$};
\node[right=of T] (F) {$F_0(\Gamma_0(x))$};
\node[right=of F] (D) {$\Delta_0(F_0(x))$};

\draw[MyBlue,->] (E)--(T) node [midway,above] {$F_{ab}$};
\draw[MyBlue,->] (T)--(F) node [midway,above] {$F_1(f)$};
\draw[MyBlue,->] (F)--(D) node [midway,above] {$\eta_x$};

\end{tikzpicture}
\end{center}
$$(F_*)_1\big(\phi^0, \phi^+, \phi^-\big) \colon \big(F_0(a), \eta_x \circ F_1(f) \circ F_{ab}, F_0(b)\big)
\to \big(F_0(y); F_0(c), \eta_y \circ F_1(g) \circ F_{cd}, F_0(d)\big),$$
$$(F_*)_1\big(\phi^0, \phi^+, \phi^-\big) = \big(F_1(\phi^0), F_1(\phi^+), F_1(\phi^-)\big),$$
where $\big(\phi^0, \phi^+, \phi^-\big) \colon (x; a, f, b) \to (y; c, g, d)\big)$, 
is a functor.
\end{lemma}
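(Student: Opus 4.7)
The plan is to mimic the proof of Lemma~\ref{lem: lemextfunctor}, but with two extra ingredients: the naturality of $\eta$ (to deal with the $\phi^0$-component) and the fact that $\Gamma$ and $\Delta$ are functors (so that $\Gamma_1$ and $\Delta_1$ interact correctly with composition). First I would verify that $F_*$ is well-defined on objects: since $F_1(f) \colon F_0(a \times b) \to F_0(\Gamma_0(x))$, composing with $F_{ab}$ on the left and $\eta_x \colon F_0(\Gamma_0(x)) \to \Delta_0(F_0(x))$ on the right yields a morphism $F_0(a) \times F_0(b) \to \Delta_0(F_0(x))$, which is exactly the data of an object of $\Chu(\C D, \Delta)$.

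The main obstacle is checking that $(F_*)_1(\phi^0, \phi^+, \phi^-)$ is a Chu-morphism of the generalised type, that is, that the outer square
\begin{align*}
\Delta_1(F_1(\phi^0)) \circ \bigl(\eta_x \circ F_1(f) \circ F_{ab}\bigr) \circ \bigl(1_{F_0(a)} \times F_1(\phi^-)\bigr)
= \bigl(\eta_y \circ F_1(g) \circ F_{cd}\bigr) \circ \bigl(F_1(\phi^+) \times 1_{F_0(d)}\bigr)
\end{align*}
commutes. I would chain the following equalities: start from the left-hand side, apply the rectangle
$F_1(1_a \times \phi^-) \circ F_{ad} = F_{ab} \circ (1_{F_0(a)} \times F_1(\phi^-))$ to rewrite $F_{ab} \circ (1_{F_0(a)} \times F_1(\phi^-))$ as $F_1(1_a \times \phi^-) \circ F_{ad}$, then use functoriality of $F$ to amalgamate $F_1(f) \circ F_1(1_a \times \phi^-) = F_1(f \circ (1_a \times \phi^-))$. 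Naturality of $\eta$ at $\phi^0$ gives $\Delta_1(F_1(\phi^0)) \circ \eta_x = \eta_y \circ F_1(\Gamma_1(\phi^0))$; absorbing $F_1(\Gamma_1(\phi^0))$ inside via functoriality of $F$ turns the expression into $\eta_y \circ F_1\bigl(\Gamma_1(\phi^0) \circ f \circ (1_a \times \phi^-)\bigr) \circ F_{ad}$. At this point I invoke the hypothesis that $(\phi^0, \phi^+, \phi^-)$ is a generalised Chu-morphism, i.e.\ $\Gamma_1(\phi^0) \circ f \circ (1_a \times \phi^-) = g \circ (\phi^+ \times 1_d)$, and finally reverse the manipulations on the right, using functoriality of $F$ and the rectangle $F_1(\phi^+ \times 1_d) \circ F_{ad} = F_{cd} \circ (F_1(\phi^+) \times 1_{F_0(d)})$.

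It remains to verify preservation of identities and composition. Preservation of identities is immediate: $F_1(1_x) = 1_{F_0(x)}$, $F_1(1_a) = 1_{F_0(a)}$, $F_1(1_b) = 1_{F_0(b)}$, so $(F_*)_1(1_{(x;a,f,b)}) = 1_{(F_*)_0(x;a,f,b)}$. For composition, given $(\theta^0, \theta^+, \theta^-) \circ (\phi^0, \phi^+, \phi^-) = (\theta^0 \circ \phi^0, \theta^+ \circ \phi^+, \phi^- \circ \theta^-)$, applying $F_1$ coordinatewise and using that $F$ is a functor gives the desired equality
\begin{align*}
(F_*)_1\bigl((\theta^0, \theta^+, \theta^-) \circ (\phi^0, \phi^+, \phi^-)\bigr) = (F_*)_1(\theta^0, \theta^+, \theta^-) \circ (F_*)_1(\phi^0, \phi^+, \phi^-),
\end{align*}
since composition in the generalised Chu category is defined componentwise (with reversal on the contravariant component, matched by $F_1(\phi^- \circ \theta^-) = F_1(\phi^-) \circ F_1(\theta^-)$). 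The genuinely nontrivial computation is the commutative-square check, which is where the hypothesis that $\eta$ is natural (rather than merely a morphism out of a fixed object, as in Lemma~\ref{lem: lemextfunctor}) is essential.
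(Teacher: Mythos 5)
Your proof is correct and follows essentially the same route as the paper's: you identify the outer commuting square as the crux and establish it by chaining the naturality of the canonical isomorphisms $F_{ab}$, the functoriality of $F$ applied to the defining equation $\Gamma_1(\phi^0) \circ f \circ (1_a \times \phi^-) = g \circ (\phi^+ \times 1_d)$, and the naturality square of $\eta$ at $\phi^0$, exactly as in the paper. The paper likewise dismisses preservation of units and composition as immediate, so there is nothing to add.
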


\begin{proof}
 We show that $(F_*)_1\big(\phi^0, \phi^+, \phi^-\big)$ is well-defined i.e., the following diagram commutes:
 \begin{center}
\begin{tikzpicture}

\node (E) at (0,0) {$\mathsmaller{F_0(a) \times F_0(d)} \  $};
\node[right=of E] (T) {};
\node[right=of T] (F) {$ \ \mathsmaller{F_0(a) \times F_0(b)}$};
\node[below=of F] (L) {$\mathsmaller{\Delta_0(F_0(x))}$};
\node[below=of L] (S) {$ \ \  \mathsmaller{\Delta_0(F_0(y))}.$};
\node[left=of S] (K) {};
\node[left=of K] (A) {$\mathsmaller{F_0(c) \times F_0(d)}  \ $};

\draw[->] (E)--(A) node [midway,left] {$\mathsmaller{F_1(\phi^+) \times 1_{F_0(d)}}$};
\draw[->] (F)--(L) node [midway,right] {$\mathsmaller{\eta_x \circ F_1(f) \circ F_{ab}}$};
\draw[->] (E)--(F) node [midway,above] {$\mathsmaller{1_{F_0(a)} \times F_1(\phi^-)}$};
\draw[->] (L)--(S) node [midway,right] {$\mathsmaller{\Delta_1(F_1(\phi^0))}$};
\draw[->] (A)--(S) node [midway,below] {$\mathsmaller{\eta_y \circ F_1(g) \circ F_{cd}}$};

\end{tikzpicture}
\end{center}
Let 
$$A = \Delta_1(F_1(\phi^0)) \circ \eta_x \circ F_1(f) \circ F_{ab} \circ [1_{F_0(a)} \times F_1(\phi^-)],$$
$$B = \eta_y \circ F_1(g) \circ F_{cd} \circ [F_1(\phi^+) \times 1_{F_0(d)}].$$
By the definition of a morphism $\big(\phi^0, \phi^+, \phi^-\big) \colon (x; a, f, b) \to (y; c, g, d)\big)$ we get 
$$\Gamma_1(\phi^0) \circ f \circ (1_a \times \phi^- = g \circ (\phi^+ \times 1_d) \To$$
$$(\ast) \ \ \ \ \ F_1(\Gamma_1(\phi^0)) \circ F_1(f) \circ F_1(1_a \times \phi^-) = F_1(g) \circ F_1(\phi^+ \times 1_d).$$
As $ F_1(\phi^+ \times 1_d) \circ F_{ad} =   F_{cd} \circ [F_1(\phi^+) \times F_1(1_d)]$,
and since the following
rectangle commutes
\begin{center}
\begin{tikzpicture}

\node (E) at (0,0) {$\mathsmaller{F_0(\Gamma_0(x))} $};
\node[right=of E] (T) {};
\node[right=of T] (F) {$\mathsmaller{F_0(\Gamma_0(y))}$};
\node[below=of F] (L) {$\mathsmaller{\Delta_0(F_0(y))}$,};
\node[below=of E] (M) {$\mathsmaller{\Delta_0(F_0(x))}$};

\draw[->] (E)--(F) node [midway,above] {$\mathsmaller{F_1(\Gamma_1(\phi^0))}$};
\draw[->] (F)--(L) node [midway,right] {$\mathsmaller{\eta_y}$};
\draw[->] (E)--(M) node [midway,left] {$\mathsmaller{\eta_x}$};
\draw[->] (M)--(L) node [midway,below] {$\mathsmaller{\Delta_1(F_1(\phi^0))}$};

\end{tikzpicture}
\end{center}
\begin{align*}
 A & = \Delta_1(F_1(\phi^0)) \circ \eta_x \circ F_1(f) \circ F_{ab} \circ [F_1(1_a) \times F_1(\phi^-)]\\
 & =  \Delta_1(F_1(\phi^0)) \circ \eta_x \circ F_1(f) \circ F_1(1_a \times \phi^-) \circ F_{ad}\\
 & = \eta_y \circ F_1(\Gamma_1(\phi^0)) \circ F_1(f) \circ F_1(1_a \times \phi^-) \circ F_{ad}\\
 & \stackrel{(\ast)} =  \eta_y \circ F_1(g) \circ F_1(\phi^+ \times 1_d) \circ F_{ad}\\ 
 & = \eta_y \circ F_1(g) \circ F_{cd} \circ [F_1(\phi^+) \times F_1(1_d)]\\
 & = B.
\end{align*}
The preservation of units and compositions by $F_*$ is immediate to show.
\end{proof}

Next we define the appropriate category on which the generalised global Chu functor will be defined. Notice that this category
is not a special case of the Grothendieck construction, but a variation of it. 

\begin{definition}[The category of pairs of ccc's and endofunctors]\label{def: G}
Let the category 
$$\sum_{\C C \in \ccCat}\End(\C C)$$
with objects pairs $(\C C, \Gamma)$, where $\C C$ in $\ccCat$ and $\Gamma \colon \C C \to \C C$ an 
endofunctor on $\C C$, and
morphisms $(F, \eta) \colon (\C C, \Gamma) \to (\C D, \Delta)$, where $F \colon \C C \to \C D$ is a product preserving 
functor and $\eta \colon F \circ \Gamma \To \Delta \circ F$. If $(G, \theta) \colon (\C D, \Delta) \to (\C E, E)$,
let $(G, \theta) \circ (F, \eta) \colon (\C C, \Gamma) \to (\C E, E)$ be defined by
\begin{center}
\begin{tikzpicture}

\node (E) at (0,0) {$\mathsmaller{\C C} $};
\node[right=of E] (S) {};
\node[right=of S] (T) {$\mathsmaller{\C D}$};
\node[right=of T] (X) {};
\node[right=of X] (F) {$\mathsmaller{\C E}$};
\node[below=of F] (L) {$\mathsmaller{\C E}$};
\node[below=of E] (M) {$\mathsmaller{\C C}$};
\node[below=of T] (N) {$\mathsmaller{\C D}$};

\draw[->] (E)--(T) node [midway,above] {$\mathsmaller{F}$};
\draw[->] (T)--(F) node [midway,above] {$\mathsmaller{G}$};
\draw[->] (M)--(N) node [midway,below] {$\mathsmaller{F}$};
\draw[->] (N)--(L) node [midway,below] {$\mathsmaller{G}$};
\draw[->] (E)--(M) node [midway,left] {$\mathsmaller{\Gamma}$};
\draw[->] (T)--(N) node [midway,left] {$\mathsmaller{\Delta}$};
\draw[->] (F)--(L) node [midway,right] {$\mathsmaller{E}$};
\draw[MyBlue,->,bend left=20] (E) to node [midway,left] {} (N);
\draw[MyBlue,->,bend right=20] (E) to node [midway,above] {$ \ \ \ \mathsmaller{\stackrel{\eta} \Too}$} (N);
\draw[MyBlue,->,bend left=20] (T) to node [midway,left] {} (L);
\draw[MyBlue,->,bend right=20] (T) to node [midway,above] {$ \ \ \ \mathsmaller{\stackrel{\theta} \Too}$} (L);

\end{tikzpicture}
\end{center}
$$(G, \theta) \circ (F, \eta) = (G \circ F, \theta \ast \eta),$$
$$\theta \ast \eta \colon (G \circ F) \circ \Gamma \To E \circ (G \circ F),$$
$$(\theta \ast \eta)_a \colon G_0(F_0(\Gamma_0(a))) \to E_0(G_0(F_0(a))),$$
$$(\theta \ast \eta)_a = \theta_{F_0(a)} \circ G_1(\eta_a)$$
\begin{center}
\begin{tikzpicture}

\node (E) at (0,0) {$\mathsmaller{G_0(F_0(\Gamma_0(a)))} $};
\node[right=of E] (S) {};
\node[right=of S] (T) {$\mathsmaller{G_0(\Delta_0(F_0(a)))}$};
\node[below=of T] (U) {$\mathsmaller{E_0(G_0(F_0(a)))}$};

\draw[MyBlue,->] (E)--(T) node [midway,above] {$\mathsmaller{G_1(\eta_a)}$};
\draw[MyBlue,->] (T)--(U) node [midway,right] {$\mathsmaller{\theta_{F_0(a)}}$};
\draw[->] (E)--(U) node [midway,left] {$\mathsmaller{(\theta \ast \eta)_a \ \ }$};

\end{tikzpicture}
\end{center}
Moreover, $1_{(\C C, \Gamma)} = \big(\Id^{\C C}, 1_{\Gamma}\big)$.
\end{definition}

First we explain why $\theta \ast \eta$ is a natural transformation $(G \circ F) \circ \Gamma \To E \circ (G \circ F)$.
If $f \colon a \to b$ in $C_1$, then, as $\eta \colon F \circ \Gamma \To \Delta \circ F$, the following left
rectangle commutes:
\begin{center}
\begin{tikzpicture}

\node (E) at (0,0) {$\mathsmaller{F_0(\Gamma_0(a))} $};
\node[right=of E] (T) {};
\node[right=of T] (F) {$\mathsmaller{F_0(\Gamma_0(b))}$};
\node[below=of F] (L) {$\mathsmaller{\Delta_0(F_0(b))}$};
\node[below=of E] (M) {$\mathsmaller{\Delta_0(F_0(a))}$};

\node[right=of F] (A) {$\mathsmaller{G_0(\Delta_0(F_0(a)))}$};
\node[right=of A] (B) {};
\node[right=of B] (C) {$\mathsmaller{G_0(\Delta_0(F_0(b)))}$};
\node[below=of C] (D) {$\mathsmaller{E_0(G_0(F_0(b)))}$};
\node[below=of A] (X) {$\mathsmaller{E_0(G_0(F_0(a)))}$};

\draw[->] (E)--(F) node [midway,above] {$\mathsmaller{F_1(\Gamma_1(f))}$};
\draw[->] (F)--(L) node [midway,right] {$\mathsmaller{\eta_b}$};
\draw[->] (E)--(M) node [midway,right] {$\mathsmaller{\ \ \ \ \ \ \ \ \ \ \ \ \#_1} $};
\draw[dashed,->] (E)--(M) node [midway,left] {$\mathsmaller{\eta_a}$};
\draw[->] (M)--(L) node [midway,below] {$\mathsmaller{\Delta_1(F_1(f))}$};

\draw[->] (A)--(C) node [midway,above] {$\mathsmaller{G_1(\Delta_1(F_1(f)))}$};
\draw[->] (C)--(D) node [midway,right] {$\mathsmaller{\theta_{F_0(b)}}$};
\draw[->] (A)--(X) node [midway,right] {$\mathsmaller{\ \ \ \ \ \ \ \ \ \ \ \ \ \ \ \#_2} $};
\draw[->] (A)--(X) node [midway,left] {$\mathsmaller{\theta_{F_0(a)}}$};
\draw[->] (X)--(D) node [midway,below] {$\mathsmaller{E_1(G_1(F_1(f)))}$};

\end{tikzpicture}
\end{center}
By commutativity $(\#_1)$ we get
$$(\ast) \ \ \ \ \ \ \  G_1(\eta_b) \circ G_1(F_1(\Gamma_1(f))) = G_1(\Delta_1(F_1(f))) \circ G_1(\eta_a).$$
As $\theta \colon G \circ \Delta \To E \circ G$, and $F_1(f) \colon F_0(a) \to F_0(b)$ in $D_1$, the above right rectangle 
commutes. The commutativity of the following rectangle diagram follows:
\begin{center}
\begin{tikzpicture}

\node (E) at (0,0) {$\mathsmaller{G_0(F_0(\Gamma_0(a)))} $};
\node[right=of E] (T) {};
\node[right=of T] (F) {$\mathsmaller{G_0(F_0(\Gamma_0(b)))}$};
\node[below=of F] (L) {$\mathsmaller{E_0(G_0(F_0(b)))}$,};
\node[below=of E] (M) {$\mathsmaller{E_0(G_0(F_0(a)))}$};

\draw[->] (E)--(F) node [midway,above] {$\mathsmaller{G_1(F_1(\Gamma_1(f)))}$};
\draw[->] (F)--(L) node [midway,right] {$\mathsmaller{(\theta \ast \eta)_b}$};
\draw[->] (E)--(M) node [midway,left] {$\mathsmaller{(\theta \ast \eta)_a}$};
\draw[->] (M)--(L) node [midway,below] {$\mathsmaller{E_1(G_1(F_1(f)))}$};

\end{tikzpicture}
\end{center}
\begin{align*}
 (\theta \ast \eta)_b \circ G_1(F_1(\Gamma_1(f))) & = \theta_{F_0(b)} \circ G_1(\eta_b) \circ G_1(F_1(\Gamma_1(f)))\\
 & \stackrel{(\ast)} = \theta_{F_0(b)} \circ G_1(\Delta_1(F_1(f))) \circ G_1(\eta_a)\\
 & \stackrel{(\#_2)} = E_1(G_1(F_1(f))) \circ \theta_{F_0(a)} \circ G_1(\eta_a)\\
 & =  E_1(G_1(F_1(f))) \circ (\theta \ast \eta)_a.
\end{align*}
If $(F, \eta) \colon (\C C, \Gamma) \to (\C D, \Delta)$, then 
$(F, \eta) \circ 1_{(\C C, \Gamma)} = (F, \eta) \circ  \big(\Id^{\C C}, 1_{\Gamma}\big) = 
(F \circ \Id^{\C C}, \eta \ast 1_G) = (F, \eta)$, as $\eta \ast 1_G = \eta$. Similarly, if $1_{(\C D, \Delta)} \circ 
(F, \eta) = \big(\Id^{\C D}, 1_{\Delta}\big) = (\Id^{\C D} \circ F, 1_{\Delta} \ast \eta) = (F, \eta)$, as 
$1_{\Delta} \ast \eta = \eta$. If $(G, \theta) \colon (\C D, \Delta) \to (\C E, E)$ and 
$(H, \rho) \colon (\C E, E) \to (\C Z, Z)$, then 
$$(H, \rho) \circ [(G, \theta) \circ (F, \eta)] = \big(H \circ (G \circ F), \rho \ast (\theta \ast \eta)\big),$$
$$[(H, \rho) \circ (G, \theta)] \circ (F, \eta) = \big((H \circ G) \circ F, (\rho \ast \theta) \ast \eta\big),$$
and as $\rho \ast (\theta \ast \eta) = (\rho \ast \theta) \ast \eta$, we get 
$(H, \rho) \circ [(G, \theta) \circ (F, \eta)] = [(H, \rho) \circ (G, \theta)] \circ (F, \eta)$.

\begin{theorem}[The generalised global Chu functor]\label{prp: Gextfunctor}
The rule 
$$\CHU \colon \sum_{\C C \in \ccCat}\End(\C C) \to \Cat,$$
$$\CHU_0(\C C, \Gamma) = \Chu(\C C, \Gamma),$$
$$\CHU_1\big(F, \eta) \colon (\C C, \Gamma) \to (\C D, \Delta)\big) \colon \Chu(\C C, \Gamma) \to \Chu(\C D, \Delta),$$
$$\CHU_1\big(F, \eta) = F_*,$$
where $F_*$ is defined in Lemma~\ref{lem: Lemextfunctor}, is a functor. Moreover, 
if $F \colon \C C \to \C D$ is a full embedding and
$\eta_a$ is a monomorphism, for every $a \in C_0$, then $F_*$ is a full embedding of $\Chu(\C C, \Gamma)$
into $\Chu(\C D, \Delta)$. 
\end{theorem}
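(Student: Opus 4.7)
The plan is to adapt the proof of Theorem~\ref{prp: extfunctor} to this generalised setting, with the main complication arising from the fact that the horizontal composite $\theta \ast \eta$ of natural transformations replaces the morphism composite $\theta \circ G_1(\phi)$ of Definition~\ref{def: ccCat}. By Lemma~\ref{lem: Lemextfunctor} the assignment $\CHU_1(F, \eta) = F_*$ already produces a well-defined functor on each morphism, so only functoriality of $\CHU$ itself and the full-embedding statement remain.

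First I would verify preservation of identities. Since $1_{(\C C, \Gamma)} = (\Id^{\C C}, 1_\Gamma)$ and the canonical isomorphisms of $\Id^{\C C}$ are $(1_{a \times b})_{a,b \in C_0}$, expanding $\CHU_1(\Id^{\C C}, 1_\Gamma)$ on an object $(x; a, f, b)$ yields $(x; a, (1_\Gamma)_x \circ f \circ 1_{a \times b}, b) = (x; a, f, b)$, and similarly on arrows; hence $\CHU_1(1_{(\C C, \Gamma)}) = 1_{\Chu(\C C, \Gamma)}$. Next, for composition, given $(G, \theta) \circ (F, \eta) = (G \circ F, \theta \ast \eta)$, I would compute both sides on an object $(x; a, f, b)$. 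The key identity is that $(\theta \ast \eta)_x = \theta_{F_0(x)} \circ G_1(\eta_x)$ combined with $(G \circ F)_{ab} = G_1(F_{ab}) \circ G_{F_0(a)F_0(b)}$ from the equation established in Section~\ref{sec: globalchu}; substituting and applying functoriality of $G$ to absorb $G_1(\eta_x) \circ G_1(F_1(f)) \circ G_1(F_{ab})$ into $G_1\bigl(\eta_x \circ F_1(f) \circ F_{ab}\bigr)$ yields exactly the composite $(G_*)_0 \circ (F_*)_0$. The equality on arrows $[(G \circ F)_*]_1 = (G_*)_1 \circ (F_*)_1$ reduces immediately to functoriality of $G \circ F$ applied componentwise to $(\phi^0, \phi^+, \phi^-)$.

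For the full-embedding statement, suppose $F$ is a full embedding and each $\eta_a$ is a monomorphism. Injectivity on objects: if $\bigl(F_0(x); F_0(a), \eta_x \circ F_1(f) \circ F_{ab}, F_0(b)\bigr) = \bigl(F_0(y); F_0(c), \eta_y \circ F_1(g) \circ F_{cd}, F_0(d)\bigr)$, then injectivity of $F$ on objects forces $x = y$, $a = c$, $b = d$; since $\eta_x$ is mono and $F_{ab}$ is an iso (hence epi), the remaining equation cancels to $F_1(f) = F_1(g)$, and faithfulness of $F$ gives $f = g$. Faithfulness of $F_*$ on arrows is immediate from faithfulness of $F$.

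Fullness is the step I expect to require the most care. Given $(\psi^0, \psi^+, \psi^-) \colon (F_*)_0(x; a, f, b) \to (F_*)_0(y; c, g, d)$ in $\Chu(\C D, \Delta)$, by fullness of $F$ I lift each component to morphisms $\phi^0 \colon x \to y$, $\phi^+ \colon a \to c$, $\phi^- \colon d \to b$ in $\C C$ with $F_1(\phi^0) = \psi^0$ etc. To show $(\phi^0, \phi^+, \phi^-)$ is a Chu transform in $\Chu(\C C, \Gamma)$, I must establish $\Gamma_1(\phi^0) \circ f \circ (1_a \times \phi^-) = g \circ (\phi^+ \times 1_d)$. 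Starting from the commutativity of the corresponding square in $\Chu(\C D, \Delta)$, I would use the naturality square of $\eta$ at $\phi^0$ together with $F$-preservation of products (specifically, the identities $F_{cd} \circ (F_1(\phi^+) \times F_1(1_d)) = F_1(\phi^+ \times 1_d) \circ F_{ad}$ and its analogue for $\phi^-$) to rewrite the hypothesis as $\eta_y \circ F_1\bigl(\Gamma_1(\phi^0) \circ f \circ (1_a \times \phi^-)\bigr) \circ F_{ad} = \eta_y \circ F_1\bigl(g \circ (\phi^+ \times 1_d)\bigr) \circ F_{ad}$. Cancelling $F_{ad}$ (an iso) on the right and the monomorphism $\eta_y$ on the left leaves $F_1$ applied to the two sides of the desired equation, and faithfulness of $F$ finishes the proof.
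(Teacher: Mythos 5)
Your proposal is correct and follows essentially the same route as the paper's proof: the same identity and composition computations via $(\theta \ast \eta)_x = \theta_{F_0(x)} \circ G_1(\eta_x)$ and $(G \circ F)_{ab} = G_1(F_{ab}) \circ G_{F_0(a)F_0(b)}$, and the same cancellation of the epimorphism $F_{ab}$ and the monomorphism $\eta_x$ for injectivity on objects. The only difference is that you work out the fullness argument (lifting the components by fullness of $F$, using naturality of $\eta$ and product preservation, then cancelling $F_{ad}$ and $\eta_y$) in detail, whereas the paper declares it to follow immediately; your detailed version is valid.
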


\begin{proof}
 By Lemma~\ref{lem: Lemextfunctor} $\CHU_1(F, \phi)$ is well-defined. Clearly,
 $$\CHU_1(1_{(\C C, \Gamma)}) = \CHU_1 \big(\Id^{\C C}, 1_{\Gamma}\big) = \big[\Id^{\C C}\big]_* = 1_{\CHU(\C C, \Gamma)}.$$
 If $(G, \theta) \colon (\C D, \Delta) \to (\C E, E)$, we show that 
 $\CHU_1(G \circ F, \theta \ast \eta) = (G \circ F)_* = G_* \circ F_* = \CHU_1(G, \theta) \circ \CHU_1(F, \eta)$.
 If $A = \big[(G \circ F)_*\big]_0(x; a, f, b)$ and $B = (G_*)_0\big((F_*)_0(x; a, f, b)\big)$, then
 \begin{align*}
  A & = \big(G_0(F_0(x)); G_0(F_0(a)), (\theta \circ \eta)_x \circ G_1(F_1(f))
  \circ (G \circ F)_{ab}, G_0(F_0(b))\big)\\
  & = \big(G_0(F_0(x)); G_0(F_0(a)), \theta_{F_0(x)} \circ G_1(\eta_x) \circ G_1(F_1(f)) \circ G_1(F_{ab}) \circ G_{F_0(a)F_0(b)}, 
  G_0(F_0(b))\big)\\
  & = \big(G_0(F_0(x)); G_0(F_0(a)), \theta_{F_0(x)} \circ G_1\big[\eta_x \circ F_1(f) \circ F_{ab}\big] \circ 
  G_{F_0(a)F_0(b)},  G_0(F_0(b))\big)\\
  & = (G_*)_0\big(F_0(x); F_0(a), \eta_x \circ F_1(f) \circ F_{ab}, F_0(b)\big)\\
  & = B.
 \end{align*}
The equality $[(G \circ F)_*]_1(\phi^0, \phi^+, \phi^-) = (G_*)_1\big((F_*)_1(\phi^0, \phi^+, \phi^-)\big)$
follows immediately.
Let $F \colon \C C \to \C D$ be a full embedding and
$\eta_a$ a monomorphism, for every $a \in C_0$. The equality $\big(F_0(x); F_0(a), \eta_x \circ F_1(f) \circ F_{ab}, 
F_0(b)\big) =  \big(F_0(x{'}); F_0(a{'}), \eta_{x{'}} \circ F_1(f{'}) \circ F_{a{'}b{'}}, F_0(b{'})\big)$ 
implies $x = x{'}, a = a{'}, b = b{'}$, and as 
 $\eta_x$ is a monomorphism and $F_ab$ an isomorphism, hence an epimorphism, we get $ F_1(f) =  F_1(f{'})$, hence
 $f = f{'}$. The fact that $F_*$ is faithful and full follows immediately.
\end{proof}

The local generalised Chu functor is a special case of the global one. Namely, if $\C D = \C C$, $F = \Id^{\C C}$, 
and $\Gamma, \Delta \colon \C C \to \C C$, and if $\eta \colon \Id^{\C C} \circ \Gamma \To \Delta \circ \Id^{\C C}$ i.e., 
$\eta \colon \Gamma \To \Delta$, then 
$$\Chu_1^{\C C}(\eta) = \big[\Id^{\C C}\big]_*.$$

%
%
%
%
%
%
%
%
%
%
%
%
%
%
%
%
%
%
%
%

\section{A generalised Chu representation of the category of predicates}
\label{sec: predchu}

Predicates on sets were organised in a category that was called $\Pred$ in~\cite{Ja99}, in order to describe 
the logic and type theory of standard sets in fibred form. Here we present this category within 
$\BST$.

\begin{definition}\label{def: predcat} 
The objects of the category of predicates $\Pred$ are triplets 
$(X, i_A^X, A)$, where $X$ is a set and $(A, i_A^X)$ is a subset of $X$. If $(X,i_A^X, A)$ and $(Y, i_B^Y, B)$ are objects
of $\Pred$, a morphism $u \colon (X, i_A^X, A) \to (Y, i_B^Y, B)$ in $\Pred$ is a pair of functions
$u = \big(u^0, u^+\big)$, where
$u^0 \colon X \to Y$ and $u^+ \colon A \to B$ such that the following diagram commutes
\begin{center}
\begin{tikzpicture}

\node (E) at (0,0) {$X$};
\node[right=of E] (T) {};
\node[right=of T] (F) {$Y$.};
\node[above=of E] (A) {$A$};
\node[above=of F] (B) {$B$};

\draw[right hook->] (A)--(E) node [midway,left] {$i_{A}^X$};
\draw[left hook->] (B)--(F) node [midway,right] {$i_{B}^Y$};
\draw[->] (A)--(B) node [midway,above] {$u^+$};
\draw[->] (E)--(F) node [midway,below] {$u^0$};

\end{tikzpicture}
\end{center}
If $v = \big(v^0, v^+\big) \colon (Y, i_B^Y, B) \to (Z, i_C^Z, C)$, let $v \circ u \colon (X, i_A^X, A) \to (Z, i_C^Z, C)$, defined
by $v \circ u = \big(v^0 \circ u^0, v^+ \circ u^+\big)$. Moreover, $1_{(X, i_A^X, A)} = \big(\id_X, \id_A\big)$.
\end{definition}

In~\cite{Ja99}, p.~11, the embedding 
$i_A^X \colon A \to X$ is omitted for simplicity, and 
a morphism $u$ is just a function $u^0 \colon X \to Y$ such that
\[\forall_{a \in A}\exists_{b \in B}\big(u^0(i_A^X(a)) =_Y i_B^Y(b)\big).\]
It is immediate to see that to each $a \in A$ there is a unique (up to the equality of $B$) $b \in B$ such that 
$u^0(i_A^X(a)) =_Y i_B^Y(b)$. By Myhill's principle of non-choice (or unique choice), introduced
in~\cite{My75}, there is a (necessarily)
unique map $u^+$ that makes the above diagram commutative. As this principle is avoided in $\BST$, we
prefer to present a morphism $u$ in $\Pred$ as a pair $(u^0, u^+)$. 
It is immediate to see that if $u^0$ is an embedding, then $u^+$ is an embedding, and 
if $u^0$ is strongly extensional, then $u^+$ is also strongly extensional. For a specific set $X$ the ``fibre'' category $\Pred_X$ is the subcategory of $\Pred$ with objects triplets of the form
$(X, A, i_A^X)$ with $X$ fixed, while a morphism $u \colon (X, A, i_A^X) \to (X, B, i_B^X)$ is a pair 
$(\id_X, u_{\mathsmaller{AB}})$, and the required commutativity of the following diagram
\begin{center}
\begin{tikzpicture}

\node (E) at (0,0) {$X$};
\node[right=of E] (T) {};
\node[right=of T] (F) {$X$};
\node[above=of E] (A) {$A$};
\node[above=of F] (B) {$B$};

\draw[right hook->] (A)--(E) node [midway,left] {$i_{A}^X$};
\draw[left hook->] (B)--(F) node [midway,right] {$i_{B}^X$};
\draw[->] (A)--(B) node [midway,above] {$u_{\mathsmaller{AB}}$};
\draw[->] (E)--(F) node [midway,below] {$\id_X$};

\end{tikzpicture}
\end{center}
expresses that $u_{\mathsmaller{AB}} \colon A \subseteq B$. Hence $\Pred_X$ is identified with the category $\B {\C P}(X)$.

\begin{proposition}[Generalised Chu representations of $\Set$ and $\Pred$]\label{prp: bishopchu3}
\normalfont (i)
\itshape 
The functor
 $E^{\Set} \colon \Set \to \Chu(\Set, \Id)$, defined by
 $$E_0^{\Set}(X) = \big(X; X, I_{X}^X, \D 1\big),$$
 $$I_X^X \colon X \times \D 1 \to \Id_0(X) = X, \ \ \ I_X^X(x, 0) = x; \ \ \ x \in X,$$
 $$E_1^{\Set}\big(f \colon X \to Y\big) = (f, f, \id_{\D 1}) \colon \big(X, I_{A}^X, A\big) \to \big(Y, i_{B}^Y, B\big)\big) = 
 \big(u^0, u^+, \id_{\D 1}\big) \colon  \big(X; X, I_{X}^X, \D 1\big) \to \big(Y; Y, I_{Y}^Y, \D 1\big),$$
 is a  strict generalised Chu representation of $\Set$ into $\Chu(\Set, \Id)$. \\[1mm]
\normalfont (ii)
\itshape The functor
 $E^{\Pred} \colon \Pred \to \Chu(\Set, \Id)$, defined by
 $$E_0^{\Pred}\big(X, i_{A}^X, A\big) = \big(X; A, I_{A}^X, \D 1\big),$$
 $$I_A^X \colon A \times \D 1 \to \Id_0(X) = X, \ \ \ I_A^X(a, 0) = i_{A}^X(a); \ \ \ a \in A,$$
 $$E_1^{\Pred}\big(u = \big(u^0, u^+\big) \colon \big(X, I_{A}^X, A\big) \to \big(Y, I_{B}^Y, B\big)\big) = 
 \big(u^0, u^+, \id_{\D 1}\big) \colon  \big(X; A, I_{A}^X, \D 1\big) \to \big(Y; B, I_{B}^Y, \D 1\big),$$
 is a strict generalised Chu representation of $\Pred$ into $\Chu(\Set, \Id)$. \\[1mm]
\normalfont (iii)
\itshape If $F \colon \Set \to \Pred$ is the full embedding of 
 $\Set$ into $\Pred$, defined by $F_0(X) = (X, \id_X, X)$ and $F_i(f \colon X \to Y) = (f, f)$, 
 the following diagram commutes
 \begin{center}
\begin{tikzpicture}

\node (E) at (0,0) {$\Pred$};
\node[right=of E] (T) {};
\node[right=of T] (F) {$\mathsmaller{\Chu(\Set, \Id)}$.};
\node[above=of E] (A) {$\Set$};
\node[above=of F] (B) {$\mathsmaller{\Chu(\Set, \Id)}$};

\draw[right hook->] (A)--(E) node [midway,left] {$F$};
\draw[left hook->] (B)--(F) node [midway,right] {$\Id$};
\draw[right hook->] (A)--(B) node [midway,above] {$E^{\Set}$};
\draw[right hook->] (E)--(F) node [midway,below] {$E^{\Pred}$};

\end{tikzpicture}
\end{center}
\end{proposition}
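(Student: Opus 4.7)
The plan is to handle the three claims in the natural order, since the proofs are essentially parallel to those of Propositions~\ref{prp: bishopchu1} and~\ref{prp: reprbish} but now in the generalised Chu setting with $\Gamma = \Id$. The routine work amounts to reading off the commuting square that defines a morphism in $\Chu(\Set, \Id)$ and translating it back and forth into a commuting triangle in $\Pred$ (or $\Set$).

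For part (ii), I first verify that $E^{\Pred}$ is well-defined on morphisms. Given $u = (u^0, u^+) \colon (X, i_A^X, A) \to (Y, i_B^Y, B)$ in $\Pred$, the square that must commute for $(u^0, u^+, \id_{\D 1})$ to be a Chu transform collapses (because $\Id_1(u^0) = u^0$, $1_A \times \id_{\D 1} = 1_{A \times \D 1}$, and $\D 1$ is terminal) to the pointwise identity $u^0\big(i_A^X(a)\big) = i_B^Y\big(u^+(a)\big)$, which holds by definition of a morphism in $\Pred$. Functoriality is then immediate from the definition of composition and units in $\Pred$ and in $\Chu(\Set, \Id)$. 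Injectivity on objects and on arrows is clear: from $\big(X; A, I_A^X, \D 1\big) = \big(Y; B, I_B^Y, \D 1\big)$ one reads off $X = Y$, $A = B$ and $i_A^X = i_B^Y$; and the pair $(u^0, u^+)$ is recovered unchanged from $(u^0, u^+, \id_{\D 1})$.

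The only substantive point is fullness, which is where the choice of $\Gamma = \Id$ pays off. Suppose $\big(\phi^0, \phi^+, \phi^-\big) \colon \big(X; A, I_A^X, \D 1\big) \to \big(Y; B, I_B^Y, \D 1\big)$ in $\Chu(\Set, \Id)$. Since $\D 1$ is terminal, $\phi^- = \id_{\D 1}$. The defining Chu square then reads
\begin{equation*}
\Id_1(\phi^0) \circ I_A^X \circ \big(1_A \times \id_{\D 1}\big) = I_B^Y \circ \big(\phi^+ \times 1_{\D 1}\big),
\end{equation*}
which, evaluated at $(a, 0)$, gives $\phi^0\big(i_A^X(a)\big) = i_B^Y\big(\phi^+(a)\big)$ for every $a \in A$. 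Hence $(\phi^0, \phi^+)$ is a morphism in $\Pred$, and $E^{\Pred}_1(\phi^0, \phi^+) = (\phi^0, \phi^+, \id_{\D 1})$. Part (i) is a direct specialisation: it is the fibre of (ii) over the full subcategory of $\Pred$ picked out by $A = X$ and $i_A^X = \id_X$, and all verifications are identical. I do not expect any real obstacle here; the only mild subtlety is the cosmetic manipulation of the canonical isomorphisms $A \times \D 1 \cong A$, which however play no active role.

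For part (iii), the commutativity of the square is purely by inspection: for every set $X$,
\begin{equation*}
E^{\Pred}_0\big(F_0(X)\big) = E^{\Pred}_0(X, \id_X, X) = \big(X; X, I_X^X, \D 1\big) = E^{\Set}_0(X),
\end{equation*}
since $I_X^X(x, 0) = \id_X(x) = x$, and for $f \colon X \to Y$,
\begin{equation*}
E^{\Pred}_1\big(F_1(f)\big) = E^{\Pred}_1(f, f) = (f, f, \id_{\D 1}) = E^{\Set}_1(f).
\end{equation*}
Thus $\Id \circ E^{\Set} = E^{\Pred} \circ F$, as required.
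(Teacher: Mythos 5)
Your proposal is correct and follows essentially the same route as the paper: the well-definedness and fullness of $E^{\Pred}$ both reduce to the pointwise identity $u^0\big(i_A^X(a)\big) = i_B^Y\big(u^+(a)\big)$ obtained by evaluating the Chu square at $(a,0)$, with $\phi^- = \id_{\D 1}$ forced by terminality of $\D 1$. The only difference is one of coverage: the paper proves only (ii) explicitly and leaves (i) and (iii) to the reader, whereas you spell out the (routine) specialisation for (i) and the object/arrow computations for (iii).
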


\begin{proof}
We show only (ii). If $u = \big(u^0, u^+\big) \colon \big(X, i_{A}^X, A\big) \to \big(Y, i_{B}^Y, B\big)$, then
$\big(u^0, u^+, \id_{\D 1}\big) \colon  \big(X; A, I_{A}^X, \D 1\big) \to \big(Y; B, I_{B}^Y, \D 1\big)$, as
the commutativity of the rectangle
\begin{center}
\begin{tikzpicture}

\node (E) at (0,0) {$X$};
\node[right=of E] (T) {};
\node[right=of T] (F) {$Y$};
\node[above=of E] (A) {$A$};
\node[above=of F] (B) {$B$};

\draw[right hook->] (A)--(E) node [midway,left] {$i_{A}^X$};
\draw[left hook->] (B)--(F) node [midway,right] {$i_{B}^Y$};
\draw[->] (A)--(B) node [midway,above] {$u^+$};
\draw[->] (E)--(F) node [midway,below] {$u^0$};

\end{tikzpicture}
\end{center}
implies the commutativity of the following diagram 
\begin{center}
\begin{tikzpicture}

\node (E) at (0,0) {$  \mathsmaller{A \times \D 1}  $};
\node[right=of E] (T) {};
\node[right=of T] (F) {$ \mathsmaller{A \times \D 1}$};
\node[below=of F] (L) {$\mathsmaller{X}$};
\node[below=of L] (S) {$ \ \mathsmaller{Y}$};
\node[left=of S] (K) { };
\node[left=of K] (A) {$\mathsmaller{B \times \D 1} \ \ $};

\draw[->] (E)--(A) node [midway,left] {$\mathsmaller{u^+ \times \id_{\D 1}}$};
\draw[->] (F)--(L) node [midway,right] {$\mathsmaller{I_A^X}$};
\draw[->] (E)--(F) node [midway,above] {$\mathsmaller{\id_A \times \id_{\D 1}}$};
\draw[->] (L)--(S) node [midway,right] {$\mathsmaller{\Id_1(u^0) = u^0}$};
\draw[->] (A)--(S) node [midway,below] {$\mathsmaller{I_B^Y}$};

\end{tikzpicture}
\end{center}
$$u^0\big(I_A^X(a, 0)\big) = u^0\big(i_A^X(a)\big) = i_B^Y\big(u^+(a)\big) = I_B^Y\big(u^+(a), 0\big).$$
Clearly, $E^{\Pred}$ is injective on objects and arrows, hence $E^{\Pred}$ is an embedding. 
It is also full, as if
$\big(u^0, u^+, \id_{\D 1}\big) \colon  \big(X; A, I_{A}^X, \D 1\big) \to \big(Y; B, I_{B}^Y, \D 1\big)$, then 
$u = \big(u^0, u^+\big) \colon \big(X, i_{A}^X, A\big) \to \big(Y, i_{B}^Y, B\big)$, because the commutativity of the 
last diagram implies the commutativity of the first rectangle. 
\end{proof}

\begin{definition}\label{def: predC}
 If $\C C$ is a category, the category $\Pred(\C C)$ of $\C C$ has objects pairs $(x, i \colon a \eto x)$, where $x \in C_0$
 and $i \in C_1(a, x)$ is a monomorphism, and morphisms $(f^0, f^+) \colon (x, i \colon a \eto x) \to (y, j \colon b \eto y)$
 with $j \circ f^+ = f_0 \circ i$
\begin{center}
\begin{tikzpicture}

\node (E) at (0,0) {$x$};
\node[right=of E] (T) {};
\node[right=of T] (F) {$y$.};
\node[above=of E] (A) {$a$};
\node[right=of A] (C) {};
\node[right=of C] (B) {$ \ b$};

\draw[right hook->] (A)--(E) node [midway,left] {$i$};
\draw[left hook->] (B)--(F) node [midway,right] {$j$};
\draw[->] (A)--(B) node [midway,above] {$f^+$};
\draw[->] (E)--(F) node [midway,below] {$f^0$};

\end{tikzpicture}
\end{center}
If $(g^0, g^+) \colon (y, j \colon b \eto y) \to (z, k \colon e \eto z)$, then $(g^0, g^+) \circ (f^0, f^+) = 
(g^0 \circ f^0, g^+ \circ f^+)$. Moreover, $1_{(x, i \colon a \eto x)} = (1_x, 1_a)$. 
\end{definition}

\begin{proposition}[Generalised Chu representation of $\Pred(\C C)$]\label{prp: predCrepr}
If $\C C$ is a $\CCC$, the functor
 $$E^{\Pred(\C C)} \colon \Pred(\C C) \to \Chu(\C C, \Id^{\C C}),$$
 $$E_0^{\Pred(\C C)}\big(x, i \colon a \eto x\big) = \big(x; a, i \circ \pr_a, 1\big),$$
 \begin{center}
\begin{tikzpicture}

\node (E) at (0,0) {$a \times 1$};
\node[right=of E] (T) {$a$};
\node[right=of T] (F) {$x$};

\draw[right hook->] (E)--(T) node [midway,above] {$\pr_a$};
\draw[right hook->] (T)--(F) node [midway,above] {$i$};

\end{tikzpicture}
\end{center}
$$E_1^{\Pred(\C C)}\big(\big(f^0, f^+\big) \colon \big(x, i \colon a \eto x\big) \to \big(y, j \colon b \eto y\big)\big) = 
 \big(f^0, f^+, 1_{1}\big) \colon  \big(x; a, i \circ \pr_a, 1\big) \to \big(y; b, j \circ \pr_b, 1\big),$$
is a strict generalised Chu representation of $\Pred(\C C)$ into $\Chu(\C C, \Id^{\C C})$.
\end{proposition}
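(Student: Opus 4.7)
The plan is to mirror, almost verbatim, the proof of Proposition~\ref{prp: subrepr} (the Chu representation of $\Sub(\C C, \gamma)$), since $\Pred(\C C)$ differs from $\Sub(\C C, \gamma)$ only in that the codomain $x$ of the monomorphism $i \colon a \eto x$ is now allowed to vary. The generalised Chu construction is designed exactly to accommodate this extra freedom: the ``base'' component $x$ sits in the first slot of a Chu space, and morphisms carry a third component $\phi^0$ acting on it. Because we take $\Gamma = \Id^{\C C}$, we have $\Gamma_1(\phi^0) = \phi^0$, so the generalised Chu square degenerates to essentially the square appearing in Proposition~\ref{prp: subrepr}.

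First I would verify well-definedness on morphisms: given $(f^0, f^+) \colon (x, i) \to (y, j)$ with $j \circ f^+ = f^0 \circ i$, the Chu square for $(f^0, f^+, 1_1)$ reads
\[
\Id_1(f^0) \circ (i \circ \pr_a) \circ (1_a \times 1_1) \;=\; (j \circ \pr_b) \circ (f^+ \times 1_1).
\]
Using $\Id_1(f^0) = f^0$, the naturality equation $\pr_b \circ (f^+ \times 1_1) = f^+ \circ \pr_a$ (as displayed in the proof of Proposition~\ref{prp: subrepr}), and the defining identity $j \circ f^+ = f^0 \circ i$, both sides reduce to $f^0 \circ i \circ \pr_a$. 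Preservation of identities and of composition is immediate from the componentwise formula, so $E^{\Pred(\C C)}$ is a functor.

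Next I would handle injectivity, strictness, and fullness. For injectivity on objects, an equality $(x; a, i \circ \pr_a, 1) = (y; b, j \circ \pr_b, 1)$ forces $x = y$, $a = b$ and $i \circ \pr_a = j \circ \pr_b$; cancelling the isomorphism $\pr_a$ (which is in particular an epimorphism) yields $i = j$. Strictness is obvious from the definition $E_1^{\Pred(\C C)}(f^0, f^+) = (f^0, f^+, 1_1)$. For fullness, a Chu transform $(\phi^0, \phi^+, \phi^-) \colon (x; a, i \circ \pr_a, 1) \to (y; b, j \circ \pr_b, 1)$ must have $\phi^- = 1_1$ because $1$ is terminal; the commuting Chu square then gives $\phi^0 \circ i \circ \pr_a = j \circ \phi^+ \circ \pr_a$, and cancelling the iso $\pr_a$ yields $\phi^0 \circ i = j \circ \phi^+$, which is precisely the condition for $(\phi^0, \phi^+)$ to be a morphism in $\Pred(\C C)$. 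Hence $(\phi^0, \phi^+, \phi^-) = E_1^{\Pred(\C C)}(\phi^0, \phi^+)$. There is no substantive obstacle: the whole proof is a direct transcription of Proposition~\ref{prp: subrepr} with the additional base-component $x$ recorded in the first slot and the observation that $\Id^{\C C}$ is inert in the Chu square.
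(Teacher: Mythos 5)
Your proposal is correct and follows essentially the same route as the paper's own proof: the same diagram chase $f^0 \circ i \circ \pr_a = (j\circ\pr_b)\circ(f^+\times 1_1)$ via the identity $f^+\circ\pr_a = \pr_b\circ(f^+\times 1_1)$ for well-definedness, cancellation of the isomorphism $\pr_a$ for injectivity on objects, and the reverse reading of the Chu square (with $\phi^- = 1_1$ forced) for fullness. If anything, your remark that the right-cancellation of $\pr_a$ uses that it is an epimorphism is slightly more precise than the paper's wording, which invokes monicity where the iso property is what is really being used.
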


\begin{proof}
The morphism $\pr_a$ is an iso, hence a mono. To show that $E_1^{\Pred(\C C)}\big(f^0, f^+\big)
\colon  \big(x; a, i \circ \pr_a, 1\big) \to \big(y; b, j \circ \pr_b, 1\big)$, we show that the following diagram commutes
\begin{center}
\begin{tikzpicture}

\node (E) at (0,0) {$  \mathsmaller{a \times 1}  $};
\node[right=of E] (T) {};
\node[right=of T] (F) {$ \mathsmaller{a \times 1}$};
\node[below=of F] (L) {$\mathsmaller{x}$};
\node[below=of L] (S) {$ \ \mathsmaller{y}$};
\node[left=of S] (K) { };
\node[left=of K] (A) {$\mathsmaller{b \times 1} \ \ $};

\draw[->] (E)--(A) node [midway,left] {$\mathsmaller{f^+ \times 1_{1}}$};
\draw[->] (F)--(L) node [midway,right] {$\mathsmaller{i \circ \pr_a}$};
\draw[->] (E)--(F) node [midway,above] {$\mathsmaller{1_{a \times 1}}$};
\draw[->] (L)--(S) node [midway,right] {$\mathsmaller{f^0}$};
\draw[->] (A)--(S) node [midway,below] {$\mathsmaller{j \circ \pr_b}$};

\end{tikzpicture}
\end{center}
\begin{align*}
 f^0 \circ (i \circ \pr_a) & = (f^0 \circ i) \circ \pr_a\\
 & = (j \circ f^+) \circ \pr_a\\
 & = j \circ (f^+ \circ \pr_a)\\
 & = j \circ [\pr_b \circ (f^+ \times 1_1)]\\
 & = (j \circ \pr_b) \circ (f^+ \times 1_1)
\end{align*}
as the equality $f^+ \circ \pr_a = \pr_b \circ (f^+ \times 1_1)$ follows as in the proof of 
Proposition~\ref{prp: subrepr}.
%
%
%
If $\big(x; a, i \circ \pr_a, 1\big) = \big(y; b, j \circ \pr_b, 1\big)$, then $x = y$, $a = b$, and $i \circ \pr_a = j \circ p_a$.
As $\pr_a$ is a mono, we get $i = j$, and hence $E^{\Pred(\C C)}$ is injective on objects. It is trivially 
injective on arrows. To show that it is full, let $(\phi^0, \phi^+, \phi^-) \colon  \big(x; a, i \circ \pr_a, 1\big) \to 
\big(y; b, j \circ \pr_b, 1\big)$. Clearly, $\phi^- = 1_1$. Moreover, by the previous equalities we get
$(\phi^0 \circ i) \circ \pr_a = (j \circ \phi^+) 
\circ \pr_a$, and since $\pr_a$ is a mono, we conclude that $\phi^0 \circ i = j \circ \phi^+$ i.e., $(\phi^0, \phi^+) \colon 
 \big(x, i \colon a \eto x\big) \to \big(y, j \colon b \eto y\big)$.
\end{proof}

%
%
%
%
%
%

\section{A generalised Chu representation of the category of complemented predicates}
\label{sec: complpredchu}

Here we organise the complemented predicates on sets that are equipped with a fixed inequality
in a category $\Pred^{\neq}$. Its
subcategory $\Pred^{\neq}_{\se}$ is formed by considering in the definition of the morphisms in $\Pred^{\neq}$
strongly extensional functions. The motivation behind the next definition is to get a strict generalised Chu representation of  
$\Pred^{\neq}(\Set)$ into the Chu category over $\Set$ and the endofunctor $\Id^2 \colon \Set \to \Set$, defined by
$$\Id^2_0(X) = X \times X,$$
$$\Id^2_1(f \colon X \to Y) : X \times X \to Y \times Y,$$
$$[\Id^2_1(f)](x, x{'}) = \big(f(x), f(x{'})\big).$$
This result is in complete analogy to the full embedding of $\Pred$ into $\Chu(\Set, \Id)$.

\begin{definition}\label{def: predineq}
The category $\Predc(\Set)$ of complemented predicates has objects pairs $(X, \B A)$, where $X$ is in $\Setc$,
the category of sets equipped with a fixed inequality and strongly extensional functions between them,
and $\B A := (A^1, A^0)$ is a complemented subset of $X$. If $(X, \B A)$ and $(Y, \B B)$ are objects of $\Predc$, a
morphism $u \colon (X, \B A) \to (Y, \B B)$ is a triplet $u = \big(u^0, u^+, u^-\big)$, where $u^0 \colon X \to Y$, 
$u^+ \colon A^1 \to B^1$, and $u^- \colon B^0 \to A^0$ such that the following rectangles commute
\begin{center}
\begin{tikzpicture}

\node (E) at (0,0) {$X$};
\node[right=of E] (T) {};
\node[right=of T] (F) {$Y$};
\node[above=of E] (A) {$A^1$};
\node[above=of F] (B) {$B^1$};

\node[right=of F] (X) {};
\node[right=of X] (K) {$Y$};
\node[right=of K] (S) {};
\node[right=of S] (L) {$X$.};
\node[above=of K] (M) {$B^0$};
\node[above=of L] (N) {$A^0$};

\draw[right hook->] (A)--(E) node [midway,left] {$i_{A^1}^X$};
\draw[left hook->] (B)--(F) node [midway,right] {$i_{B^1}^Y$};
\draw[->] (A)--(B) node [midway,above] {$u^+$};
\draw[->] (E)--(F) node [midway,below] {$u^0$};

\draw[right hook->] (M)--(K) node [midway,left] {$i_{B^0}^Y$};
\draw[left hook->] (N)--(L) node [midway,right] {$i_{A^0}^X$};
\draw[->] (M)--(N) node [midway,above] {$u^-$};
\draw[->] (L)--(K) node [midway,below] {$u^0$};

\end{tikzpicture}
\end{center}
If $u = \big(v^0, v^+, v^-\big) \colon (Y, \B B) \to (Z, \B C)$, we define the composite morphism 
$v \circ u \colon (X, \B A) \to (Z, \B C)$ by
$v \circ u = \big(v^0 \circ u^0, v^+ \circ u^+, u^- \circ v^-\big)$. Moreover, $1_{(X, \B A)} = \big(\id_X, \id_{A^1}, 
\id_{A^0}\big)$.

\end{definition}

\begin{proposition}[Generalised Chu representation of $\Pred^{\neq}(\Set)$]\label{prp: bishopchu4}
 The functor
 $$E^{\Pred^{\neq}(\Set)} \colon \Pred^{\neq} \to \Chu(\Set, \Id^2),$$
 $$E_0^{\Pred^{\neq}(\Set)}\big(X, \B A\big) = \big(X; A^1, i_{A^1}^X \times i_{A^0}^X, A^0\big),$$
 $$i_{A^1}^X \times i_{A^0}^X \colon A^1 \times A^0 \to \Id^2_0(X) = X \times X,$$
 $$E_1^{\Pred^{\neq}(\Set)}\big(u^0, u^+, u^-\big) = 
 \big(u^0, u^+, u^-\big) \colon  \big(X; A^1, i_{A^1}^X \times i_{A^0}^X, A^0\big) \to 
 \big(Y; B^1, i_{B^1}^Y \times i_{B^0}^Y, B^0\big),$$
 where $\big(u^0, u^+, u^-\big) \colon \big(X, \B A\big) \to \big(Y, \B B\big)$, is a strict generalised Chu representation of $\Pred^{\neq}(\Set)$ into $\Chu(\Set, \Id^2)$.
\end{proposition}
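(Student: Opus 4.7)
The plan is to follow the same pattern as the proof of Proposition~\ref{prp: bishopchu2} (the Chu representation of $\B {\C P}^{\Disj}(X)$), with the additional predicate component $X$ incorporated as in Proposition~\ref{prp: bishopchu3}(ii) for $\Pred$. The conceptual core is that the endofunctor $\Id^2$ sends $X$ to the binary product $X \times X$, so the single Chu-morphism commutativity condition in $\Chu(\Set, \Id^2)$ factorises, via the two product projections $\pr_1, \pr_2 \colon Y \times Y \to Y$, into exactly the two commutativity conditions characterising a morphism of $\Pred^{\neq}(\Set)$.

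First I would verify that $E_1^{\Pred^{\neq}(\Set)}(u^0, u^+, u^-) = (u^0, u^+, u^-)$ really is a morphism in $\Chu(\Set, \Id^2)$, i.e.\ that
\[
(u^0 \times u^0) \circ (i_{A^1}^X \times i_{A^0}^X) \circ (1_{A^1} \times u^-) = (i_{B^1}^Y \times i_{B^0}^Y) \circ (u^+ \times 1_{B^0}).
\]
Evaluating at $(a, b^0) \in A^1 \times B^0$, the left-hand side yields $(u^0(i_{A^1}^X(a)), u^0(i_{A^0}^X(u^-(b^0))))$ and the right-hand side yields $(i_{B^1}^Y(u^+(a)), i_{B^0}^Y(b^0))$; these agree coordinate-by-coordinate by the two commuting rectangles in the definition of a $\Pred^{\neq}(\Set)$-morphism. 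Functoriality is straightforward from comparing the composition and identity conventions, and injectivity on objects (by reading off the components of the quadruple equality, using that equal products of embeddings into $X \times X$ identify each factor) and on arrows (since tuple entries are recorded verbatim) is immediate.

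For fullness, given a Chu transform $(\phi^0, \phi^+, \phi^-) \colon (X; A^1, i_{A^1}^X \times i_{A^0}^X, A^0) \to (Y; B^1, i_{B^1}^Y \times i_{B^0}^Y, B^0)$, I would compose the Chu-morphism equation with the two projections $\pr_1, \pr_2 \colon Y \times Y \to Y$, obtaining $\phi^0 \circ i_{A^1}^X = i_{B^1}^Y \circ \phi^+$ from the first and $\phi^0 \circ i_{A^0}^X \circ \phi^- = i_{B^0}^Y$ from the second; together these are exactly the two rectangles required for $(\phi^0, \phi^+, \phi^-)$ to define a morphism $(X, \B A) \to (Y, \B B)$ in $\Pred^{\neq}(\Set)$. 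The step I expect to require the most care is this projection step in fullness: it is precisely here that the choice of the endofunctor $\Id^2$ rather than $\Id$ is essential, since the proof needs a codomain that can record both the positive part $A^1$ and the negative part $A^0$ simultaneously in a single arrow, and the product $X \times X$ is what achieves this. This is presumably what the author's earlier remark alludes to when noting that it is this embedding that shaped the ``right'' definition of the category $\Pred^{\neq}$.
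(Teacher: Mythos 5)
Your proposal is correct and follows essentially the same route as the paper's proof: the same element-wise verification that the two commuting rectangles of a $\Pred^{\neq}(\Set)$-morphism yield the single Chu-transform square over $X \times X$, and fullness by reading the two coordinates of that square back as the two rectangles. The explicit framing via the projections $\pr_1, \pr_2$ is just a restatement of the paper's coordinate-by-coordinate comparison, and your two recovered equations $\phi^0 \circ i_{A^1}^X = i_{B^1}^Y \circ \phi^+$ and $\phi^0 \circ i_{A^0}^X \circ \phi^- = i_{B^0}^Y$ are exactly the required rectangles.
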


\begin{proof}
If $\big(u^0, u^+, u^-\big) \colon \big(X, \B A\big) \to \big(Y, \B B\big)$, then
$\big(u^0, u^+, u^-\big) \colon  \big(X; A^1, i_{A^1}^X \times i_{A^0}^X, A^0\big) \to 
 \big(Y; B^1, i_{B^1}^Y \times i_{B^0}^Y, B^0\big)$, as
the commutativity of the following two rectangles
\begin{center}
\begin{tikzpicture}

\node (E) at (0,0) {$X$};
\node[right=of E] (T) {};
\node[right=of T] (F) {$Y$};
\node[above=of E] (A) {$A^1$};
\node[above=of F] (B) {$B^1$};

\node[right=of F] (X) {};
\node[right=of X] (K) {$Y$};
\node[right=of K] (S) {};
\node[right=of S] (L) {$X$.};
\node[above=of K] (M) {$B^0$};
\node[above=of L] (N) {$A^0$};

\draw[right hook->] (A)--(E) node [midway,left] {$i_{A^1}^X$};
\draw[left hook->] (B)--(F) node [midway,right] {$i_{B^1}^Y$};
\draw[->] (A)--(B) node [midway,above] {$u^+$};
\draw[->] (E)--(F) node [midway,below] {$u^0$};

\draw[right hook->] (M)--(K) node [midway,left] {$i_{B^0}^Y$};
\draw[left hook->] (N)--(L) node [midway,right] {$i_{A^0}^X $};
\draw[->] (M)--(N) node [midway,above] {$u^-$};
\draw[->] (L)--(K) node [midway,below] {$u^0$};

\end{tikzpicture}
\end{center}
implies the commutativity of the following diagram 
\begin{center}
\begin{tikzpicture}

\node (E) at (0,0) {$  \mathsmaller{A^1 \times B^0} \ $};
\node[right=of E] (T) {};
\node[right=of T] (F) {$ \mathsmaller{A^1 \times A^0}$};
\node[below=of F] (L) {$ \ \mathsmaller{X \times X}$};
\node[below=of L] (S) {$ \ \ \mathsmaller{Y \times Y}$};
\node[left=of S] (K) { };
\node[left=of K] (A) {$\mathsmaller{B^1 \times B^0} $};

\draw[->] (E)--(A) node [midway,left] {$\mathsmaller{u^+ \times \id_{B^0}}$};
\draw[->] (F)--(L) node [midway,right] {$\mathsmaller{i_{A^1}^X \times i_{A^0}^X}$};
\draw[->] (E)--(F) node [midway,above] {$\mathsmaller{\id_{A^1} \times u^-}$};
\draw[->] (L)--(S) node [midway,right] {$\mathsmaller{\Id_1^2(u^0)}$};
\draw[->] (A)--(S) node [midway,below] {$\mathsmaller{i_{B^1}^Y \times i_{B^0}^Y}$};

\end{tikzpicture}
\end{center}
\begin{align*}
 \Id_1^2(u^0)\big[\big(i_{A^1}^X \times i_{A^0}^X\big)\big(\id_{A^1} \times u^-\big)\big(a^1, b^0\big)\big]
 & = \Id_1^2(u^0)\big[\big(i_{A^1}^X(a^1), i_{A^0}^X(u^-(b^0))\big)\big]\\
 & = \big(u^0(i_{A^1}^X(a^1)), u^0(i_{A^0}^X(u^-(b^0)))\big)\\
 & = \big(i_{B^1}^Y(u^+(a^1)), i_{B^0}^Y(b^0)\big)\\
 & = \big[i_{B^1}^Y \times i_{B^0}^Y\big]\big(u^+(a^1), b^0)\big)\\
 & = \big[i_{B^1}^Y \times i_{B^0}^Y\big]\big(u^+ \times \id_{B^0}\big)(a^1, b^0).
\end{align*}
Clearly, $E^{\Pred^{\neq}}$ is injective on objects and arrows, hence $E^{\Pred^{\neq}}$ is an embedding. It is also full,
as if
$\big(u^0, u^+, u^-\big) \colon  \big(X; A^1, i_{A^1}^X \times i_{A^0}^X, A^0\big) \to 
 \big(Y; B^1, i_{B^1}^Y \times i_{B^0}^Y, B^0\big)$, then 
 $\big(u^0, u^+, u^-\big) \colon \big(X, \B A\big) \to \big(Y, \B B\big)$, because the commutativity of the 
last diagram implies the commutativity of the above two rectangles. 
\end{proof}

\section{The Chu construction and the antiparallel Grothendieck construction}
\label{sec: chugroth}

So far, we related the two constructions through the domain of the global Chu functor. The domain of the generalised global 
Chu functor has also some affinity to the Grothendieck construction. Next we discuss the relation between the 
two constructions themselves. A first result in this direction is the following result of Abramsky in~\cite{Ab18}, p.~14. 
Notice that instrumental to the proof of his result is a contravariant, or reverse, definition of the arrows in the
Grothendieck category. Namely, if $P \colon C^{\op} \to \CAT$, where $\CAT$ is the category of (large) categories,
an arrow $(f, \phi) \colon (a, x) \to (b, y)$ 
in the category $\Groth(\C C, P)$, where $x, y$ are objects in $P_0(a)$ and $P_0(b)$, respectively, is an arrow 
$f \colon b \to a$ in $\C C$ and an arrow $\phi \colon [P_1(f)]_0(x) \to y$ in $P_0(b)$. 
In the literature the standard approach to the definition of the category of elements or of the Grothendieck category is
is the covariant definition of the arrow $(f, \phi)$, where $f \colon a \to b$ and $\phi \colon x \to [P_1(f)]_0(y)$.
As we explain also later in this section, this reverse definition of the arrows in $\Groth(\C C, P)$ is necessary to 
Abramsky's result. Next follows the generalisation of Abramsky's result on an arbitrary $\CCC$.

\begin{proposition}[Abramsky 2018]\label{prp: abr}
 Let $\C C$ be a ccc and $\gamma \in C_0$. If $x \in C_0$, let $\Chu_x(\C C, \gamma)$ be the subcategory of 
 $Chu(\C C, \gamma)$ with objects triplets of the form $(a, f, x)$ and morphisms the pairs $(\phi^+, 1_x) \colon (a, f, x) 
 \to (b, g, x)$. If $h \colon x{'} \to x$, let the functor 
 $$h^* \colon \Chu_x(\C C, \gamma) \to \Chu_{x{'}}(\C C, \gamma),$$
 where $h^*_0(a, f, x) = \big(a, f \circ (1_a \times h), x{'}\big)$ and $h^*_1\big(\phi^+, 1_x\big) = \big(\phi^+, 1_x\big)$.
If $\Chu^{\gamma} \colon \C C^{\op} \to \CAT$ is the contravariant functor defined by 
 $$C_0 \ni x \mapsto \Chu_x(\C C, \gamma),$$
 $$\Chu^{\gamma}(h \colon x{'} \to x) = h^*,$$
 then the category $\Groth(\C C, \Chu_{\gamma})$ is the Chu category $\Chu(\C C, \gamma)$. 
\end{proposition}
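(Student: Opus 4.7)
The plan is to exhibit a canonical isomorphism between $\Groth(\C C, \Chu^{\gamma})$ and $\Chu(\C C, \gamma)$ that is essentially the identity on objects and arrows after unfolding the contravariant Grothendieck construction. First I would observe that an object of $\Groth(\C C, \Chu^{\gamma})$ is a pair $(x, T)$ with $x \in C_0$ and $T$ an object of $\Chu_x(\C C, \gamma)$, hence of the form $T = (a, f, x)$ with $f \colon a \times x \to \gamma$. Since the component $x$ is already recorded inside $T$, these pairs are in canonical bijection with the triplets $(a, f, x)$ that constitute $[\Chu(\C C, \gamma)]_0$.

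Next I would unfold the reverse-direction convention for morphisms. A morphism $(h, \sigma) \colon (x, (a, f, x)) \to (x', (b, g, x'))$ in $\Groth(\C C, \Chu^{\gamma})$ consists of $h \colon x' \to x$ in $C_1$ together with $\sigma \colon [\Chu^{\gamma}_1(h)]_0(a, f, x) \to (b, g, x')$ in $\Chu_{x'}(\C C, \gamma)$. By the definition of $h^{*} = \Chu^{\gamma}_1(h)$, the source of $\sigma$ equals $(a, f \circ (1_a \times h), x')$, and by the definition of the subcategory $\Chu_{x'}(\C C, \gamma)$, $\sigma$ is forced to have the form $(\phi^+, 1_{x'})$ with $\phi^+ \colon a \to b$ in $C_1$ satisfying the Chu-transform equation $f \circ (1_a \times h) = g \circ (\phi^+ \times 1_{x'})$. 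But this is exactly the commutativity condition for the pair $(\phi^+, h) \colon (a, f, x) \to (b, g, x')$ to be a morphism of $\Chu(\C C, \gamma)$ with $\phi^- := h$. This gives the bijection $(h, (\phi^+, 1_{x'})) \leftrightarrow (\phi^+, h)$ on arrows.

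The remaining work is routine verification that identities and composition are preserved. The Grothendieck identity at $(x, (a, f, x))$ is $(1_x, (1_a, 1_x))$, which corresponds to $1_{(a, f, x)} = (1_a, 1_x)$ in $\Chu(\C C, \gamma)$. For composition, given $(h, (\phi^+, 1_{x'}))$ and $(k, (\theta^+, 1_{x''}))$ composable in Groth, the functor $[k^{*}]_1$ leaves the first coordinate untouched and replaces $1_{x'}$ by $1_{x''}$, so $[k^{*}]_1(\phi^+, 1_{x'}) = (\phi^+, 1_{x''})$; the Grothendieck composite is therefore $(h \circ k, (\theta^+ \circ \phi^+, 1_{x''}))$, which corresponds under the bijection to $(\theta^+ \circ \phi^+, h \circ k)$, i.e., to the Chu composite of $(\phi^+, h)$ and $(\theta^+, k)$ as prescribed by Definition~\ref{def: chu}.

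The only conceptually nontrivial point here is the direction-matching, and this is precisely why the statement relies on the reverse-arrow convention for $\Groth(\C C, P)$: under the covariant convention $h$ would go $x \to x'$ and could not play the role of the Chu second coordinate $\phi^-$, which by Definition~\ref{def: chu} must go $x' \to x$. With the reverse convention this alignment is forced and the proof reduces entirely to the unfolding of definitions sketched above.
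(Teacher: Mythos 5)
Your proof is correct. Note that the paper itself gives no argument for this proposition --- its ``proof'' is only the citation ``See~\cite{Ga21}'' --- so there is no in-paper approach to compare against; your definition-unfolding argument is the expected one and it is complete: the object bijection, the identification of the reverse-direction Grothendieck arrow $(h,(\phi^+,1_{x'}))$ with the Chu transform $(\phi^+,h)$ via the coincidence of the two commutativity conditions $f\circ(1_a\times h)=g\circ(\phi^+\times 1_{x'})$, and the check on identities and composites (where the Chu composition rule $\theta\circ\phi=(\theta^+\circ\phi^+,\phi^-\circ\theta^-)$ matches $(h\circ k,(\theta^+\circ\phi^+,1_{x''}))$) are all verified. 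You also correctly resolved the small notational slip in the statement, reading $h^*_1(\phi^+,1_x)$ as landing in $\Chu_{x'}(\C C,\gamma)$ with second component $1_{x'}$, which is what makes the composition check go through.
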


\begin{proof}
See~\cite{Ga21}.
\end{proof}

The Chu construction can be seen as a special case of the antiparallel Grothendieck construction, or the antiparallel category of elements, on the product category, in case the $\CCC$ $\C C$ is locally small. In the next definition we could consider a product $\C C \times \C D$ instead of a product $\C C \times \C C$, and more options occur if larger products of categories are considered. If $\C C$ is a category, $a, b \in C_0$, and $S \colon (\C C \times \C C)^{\op} \to \Set$ a contravariant functor on $\C C \times \C C$, let the induced contravariant functors
$$S_a \colon \C C^{\op} \to \Set, \ \ S_a(c) = S_0(a, c) \ \ S_a(g \colon c \to c{'}) = S_1(1_a, g) \colon S_0(a, c{'}) \to S_0(a, c),$$
$$_bS \colon \C C^{\op} \to \Set, \ \ _bS(c) = S_0(c, b) \ \ _bS(g \colon c \to c{'}) = S_1(g, 1_b) \colon S_0(c{'}, b) \to S_0(c, b).$$

\begin{definition}\label{def: antiparallel}
Let $\C C$ be a category and $S \colon (\C C \times \C C)^{\op} \to \Set$. The $($contravariant$)$ antiparallel Grothendieck category $\Groth^{\leftrightarrows}\big(\C C \times \C C, S\big)$ has objects triplets $(a, x, u)$, where 
$a, x \in C_0$ and $u \in S_0(a, x)$, and morphisms pairs $\big(\phi^+, \phi^-\big) \colon (a, x, u) \to (b, y, v)$, where $\phi^+ \colon a \to b$ and $\phi^- \colon y \to x$ are morphisms in $\C C$ such that 
$[S_a(\phi^-)](u) = [_yS(\phi^+)](v)$ 
\begin{center}
\begin{tikzpicture}

\node (E) at (0,0) {$S_0(b, y)$};
\node[right=of E] (T) {};
\node[right=of T] (F) {$S_0(a, y)$.};
\node[above=of F] (A) {$S_0(a, x)$};

\draw[->] (E)--(F) node [midway,below] {$_yS(\phi^+)$};
\draw[->] (A)--(F) node [midway,right] {$S_a(\phi^-)$};

\end{tikzpicture}
\end{center}
If $\big(\theta^+, \theta^-\big) \colon (b, y, v) \to (c, z, w)$, let $\big(\theta^+, \theta^-\big) \circ \big(\phi^+, \phi^-\big)
= \big(\theta^= \circ \phi^+, \phi^- \theta^-\big)$.
Moreover, $1_{(a, x, u)} = (1_a, 1_x)$
\end{definition}

To justify the composition of morphisms in $\Groth^{\leftrightarrows}\big(\C C \times \C C, S\big)$, let the equalities:
\begin{equation}\label{eq: gr1}
S_1(1_a, \phi^-)](u) = [S_1(\phi^+, 1_y)](v)
\end{equation}
\begin{equation}\label{eq: gr2}
S_1(1_b, \theta^-)](v) = [S_1(\theta^+, 1_z)](w).
\end{equation}
We show the equality $S_1(1_a, \phi^- \circ \theta^-)](u) = [S_1(\theta^+ \circ \phi^+, 1_z)](w)$ as follows:
\begin{center}
\begin{tikzpicture}

\node (E) at (0,0) {$S_0(c, z)$};
\node[right=of E] (T) {};
\node[right=of T] (F) {$S_0(b, z)$};
\node[right=of F] (S) {};
\node[right=of S] (A) {$S_0(a, z)$.};
\node[above=of F] (B) {$S_0(b, y)$};
\node[above=of A] (C) {$S_0(a, y)$};
\node[above=of C] (D) {$S_0(a, x)$};

\draw[MyBlue,->] (E)--(F) node [midway,below] {$S_1(\theta^+, 1_z)$};
\draw[->] (F)--(A) node [midway,below] {$S_1(\phi^+, 1_z)$};
\draw[->] (B)--(F) node [midway,left] {$S_1(1_b, \theta^-)$};
\draw[->] (C)--(A) node [midway,right] {$S_1(1_a, \theta^-)$};
\draw[MyBlue,->] (B)--(C) node [midway,above] {$S_1(\phi^+, 1_y)$};
\draw[->] (D)--(C) node [midway,right] {$S_1(1_a, \phi^{-})$};

\end{tikzpicture}
\end{center}
\begin{align*}
[S_1(1_a, \phi^- \circ \theta^-)](u) & = \big[S_1\big((1_a, \phi^-) \circ (1_a, \theta^-)\big)\big](u) \\
& = \big[S_1(1_a, \theta^-) \circ S_1(1_a, \phi^-)\big](u) \\
& = [S_1(1_a, \theta^-)]\big(\big[S_1(1_a, \phi^-)\big](u)\big) \\
&  \stackrel{(\ref{eq: gr1})} = [S_1(1_a, \theta^-)]\big(\big[S_1(\phi^+, 1_y)\big](v)\big) \\
& = \big[S_1\big((\phi^+, 1_y) \circ (1_a, \theta^-)\big)\big](v) \\
& = \big[S_1(\phi^+ \circ 1_a, 1_y \circ \theta^-)\big](v) \\
& = \big[S_1(\phi^+, \theta^-)\big](v) \\
& = \big[S_1(1_b \circ \phi^+, \theta^- \circ 1_z)\big](v) \\
& = \big[S_1\big((1_b, \theta^-) \circ (\phi^+, 1_z)\big)\big](v) \\
& = [S_1(\phi^+, 1_z)]\big(\big[S_1(1_b, \theta^-)\big](v)\big) \\
& \stackrel{(\ref{eq: gr2})} = [S_1(\phi^+, 1_z)]\big(\big[S_1(\theta^+, 1_z)\big](w)\big) \\
& = \big[S_1\big((\theta^+, 1_z) \circ (\phi^+, 1_z)\big)\big](w) \\
& = [S_1(\theta^+ \circ \phi^+, 1_z)](w).
\end{align*}
The \textit{parallel} Grothendieck construction on $\C C \times \C C$ and $S$, with $\big(\phi^+, \phi^-\big) \colon (a, x, u) \to (b, y, v)$ is a pair of morphisms $\phi^+ \colon a \to b$ and $\phi^- \colon x \to y$ in $\C C$ is the standard category of elements over $\C C \times \C C$ and $S$. If $\C C$ is a locally small $\CCC$, we have the $\Set$-valued contravariant functor 
$$ \Hom(_{-} \times _{-}, \gamma)\big) \colon (\C C \times \C C)^{\op} \to \Set,$$
$$(a, b) \mapsto \Hom(a \times b, \gamma),$$
$$\Hom(_{-} \times _{-}, \gamma)\big)_1(\phi^+ \colon a \to a{'}, \phi^- \colon b \to b{'}) \colon \Hom(a{'} \times b{'}, \gamma)
\to \Hom(a \times b, \gamma),$$
$$\big[\Hom(_{-} \times _{-}, \gamma)\big)_1\big(\phi^+, \phi^-\big)\big](h) = h \circ \big(\phi^+ \times \phi^- \big)$$
\begin{center}
\begin{tikzpicture}

\node (E) at (0,0) {$\mathsmaller{a \times b} $};
\node[right=of E] (S) {};
\node[right=of S] (T) {$\mathsmaller{a{'} \times b{'}}$};
\node[right=of T] (U) {};
\node[right=of U] (F) {$\mathsmaller{\gamma}$.};

\draw[->] (E)--(T) node [midway,above] {$\mathsmaller{\phi^+ \times \phi^-}$};
\draw[->] (T)--(F) node [midway,above] {$h$};
\draw[MyBlue,->,bend right=30] (E) to node [midway,below] {$\mathsmaller{\big[\Hom(_{-} \times _{-}, \gamma)\big)_1\big(\phi^+, \phi^-\big)\big](h)}$} (F);
\end{tikzpicture}
\end{center}

\begin{proposition}\label{prp: chuspecialcase}
If $\C C$ is a locally small $\CCC$ and $\gamma \in C_0$, the Chu category $\Chu(\C C, \gamma)$ is the antiparallel Grothendieck category $\Groth^{\leftrightarrows}\big(\C C \times \C C, \Hom(_{-} \times _{-}, \gamma)\big)$.
\end{proposition}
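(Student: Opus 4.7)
The plan is to verify that the stated equality is literally an equality (not merely an equivalence) of categories by unpacking the definition of the $\Set$-valued bifunctor $S = \Hom(_{-} \times _{-}, \gamma)$ and comparing object-for-object and arrow-for-arrow with Definition~\ref{def: chu}. Since both categories have the same notion of identity $(1_a, 1_x)$ and the same composition rule $(\theta^+, \theta^-) \circ (\phi^+, \phi^-) = (\theta^+ \circ \phi^+, \phi^- \circ \theta^-)$, the content of the proof reduces to matching the class of objects and matching the hom-condition.

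First I would observe that the objects of $\Chu(\C C, \gamma)$ are triples $(a, f, x)$ with $a, x \in C_0$ and $f \colon a \times x \to \gamma$ in $C_1$, whereas the objects of $\Groth^{\leftrightarrows}(\C C \times \C C, S)$ are triples $(a, x, u)$ with $u \in S_0(a, x) = \Hom(a \times x, \gamma)$. Setting $u := f$ gives a bijection on objects, in fact an equality since $\Hom(a \times x, \gamma) = C_1(a \times x, \gamma)$ by local smallness.

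Next I would unpack the hom-condition of Definition~\ref{def: antiparallel} for this specific $S$. By definition of $S_a$ and $_yS$, for a morphism $\big(\phi^+ \colon a \to b,\ \phi^- \colon y \to x\big)$ one has
\[
[S_a(\phi^-)](f) = S_1(1_a, \phi^-)(f) = f \circ (1_a \times \phi^-),
\]
\[
[_yS(\phi^+)](g) = S_1(\phi^+, 1_y)(g) = g \circ (\phi^+ \times 1_y),
\]
so the required equality $[S_a(\phi^-)](u) = [_yS(\phi^+)](v)$ becomes exactly $f \circ (1_a \times \phi^-) = g \circ (\phi^+ \times 1_y)$, which is the commutativity of the Chu square in Definition~\ref{def: chu}. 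Hence the morphism classes coincide.

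Finally I would note that since the identity and the composition law are syntactically identical in the two definitions, the functoriality equations for $\Groth^{\leftrightarrows}(\C C \times \C C, S)$ (verified in the computation following Definition~\ref{def: antiparallel}) match those for $\Chu(\C C, \gamma)$, and the two categories are equal. The only non-bookkeeping step is the identification of the bifunctorial action of $\Hom(_{-} \times _{-}, \gamma)$ with pre-composition along $1_a \times \phi^-$ and $\phi^+ \times 1_y$, which is routine but needs to be stated because the antiparallel placement of $\phi^+$ and $\phi^-$ in the Chu transform is precisely what forces the use of the \emph{antiparallel} Grothendieck construction rather than the standard parallel one; this is where the ``main obstacle'' (really just a conceptual check) lies.
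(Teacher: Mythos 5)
Your proposal is correct and follows essentially the same route as the paper's proof, whose single substantive step is exactly your observation that the defining equality $[S_a(\phi^-)](u) = [{}_yS(\phi^+)](v)$ unpacks to $f \circ (1_a \times \phi^-) = g \circ (\phi^+ \times 1_y)$, the commutativity of the Chu square. Your additional checks (objects, identities, composition) are sound bookkeeping that the paper leaves implicit.
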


\begin{proof}
In this case the defining equality~(\ref{eq: gr1}) takes the form 
$$\big[\Hom(_{-} \times _{-}, \gamma)\big)_1\big(1_a, \phi^-\big)\big](f) = 
\big[\Hom(_{-} \times _{-}, \gamma)\big)_1\big(\phi^-+, 1_y\big)\big](g)$$
i.e., $f \circ (1_a \times \phi^-) = g \circ (\phi^+ \times 1_y)$.
\end{proof}

In relation to Abramsky's result, and for a locally small $\CCC$ $\C C$ the previous result is maybe more interesting, as the functor $S$ is only $\Set$-valued, and not $\CAT$-valued. Next we describe the global version of the functor 
$\Hom(_{-} \times _{-}, \gamma)$.

\begin{proposition}\label{prp: sigmafunctor}
If $\C C$ is a locally small $\CCC$, the functor 
$$\Hom(_{-} \times _{-}, \ _{-}) \colon \C C \to \Fun\big((\C C \times \C C)^{\op}, \Set\big),$$
$$\big[\Hom(_{-} \times _{-}, \ _{-})\big]_0(\gamma) = \Hom(_{-} \times _{-}, \gamma),$$
$$\big[\Hom(_{-} \times _{-}, \ _{-})\big]_1( f \colon \gamma \to \gamma{'}) = \eta^f \colon \Hom(_{-} \times _{-}, \gamma) \To \Hom(_{-} \times _{-}, \gamma{'}),$$
$$\eta_{(a,b)}^f \colon \Hom(a \times b, \gamma) \to \Hom(a \times b, \gamma{'}),$$
$$\eta_{(a,b)}^f (h) = f \circ h$$
\begin{center}
\begin{tikzpicture}

\node (E) at (0,0) {$\mathsmaller{a \times b} $};
\node[right=of E] (T) {$\mathsmaller{\gamma}$};
\node[right=of T] (F) {$\mathsmaller{\gamma{'}}$.};

\draw[->] (E)--(T) node [midway,above] {$\mathsmaller{h}$};
\draw[->] (T)--(F) node [midway,above] {$\mathsmaller{f}$};
\draw[MyBlue,->,bend right=30] (E) to node [midway,below] {$\mathsmaller{eta_{(a,b)}^f (h)}$} (F);
\end{tikzpicture}
\end{center}
is an embedding. of $\C C$ into $\Fun\big((\C C \times \C C)^{\op}, \Set\big)$.
\end{proposition}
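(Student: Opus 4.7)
The plan is to verify the three ingredients separately: that for each $f \colon \gamma \to \gamma{'}$ the family $\eta^f$ is a natural transformation; that the assignment $\gamma \mapsto \Hom(\_ \times \_, \gamma)$, $f \mapsto \eta^f$ preserves identities and vertical composition; and that this functor is injective on objects and faithful, as required by Definition~\ref{def: churepresent}.

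The functoriality is pure diagram chasing based on associativity and the unit laws of $\C C$. Given $(\phi^+, \phi^-) \colon (a, b) \to (a{'}, b{'})$ in $\C C \times \C C$, the two composites in the naturality square for $\eta^f$ both send $h \colon a{'} \times b{'} \to \gamma$ to $f \circ h \circ (\phi^+ \times \phi^-)$, so $\eta^f$ is natural. Further, $\eta^{1_\gamma}_{(a,b)}(h) = 1_\gamma \circ h = h$, hence $\eta^{1_\gamma} = 1_{\Hom(\_ \times \_, \gamma)}$; and $\eta^{g \circ f}_{(a,b)}(h) = (g \circ f) \circ h = g \circ (f \circ h) = \big(\eta^g_{(a,b)} \circ \eta^f_{(a,b)}\big)(h)$, hence vertical composition is preserved.

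For faithfulness, I would evaluate at the pair $(\gamma, 1)$ and test against the projection $\pr_\gamma \colon \gamma \times 1 \to \gamma$. If $\eta^f = \eta^{f{'}}$ for $f, f{'} \colon \gamma \to \gamma{'}$, then in particular $f \circ \pr_\gamma = \eta^f_{(\gamma, 1)}(\pr_\gamma) = \eta^{f{'}}_{(\gamma, 1)}(\pr_\gamma) = f{'} \circ \pr_\gamma$. Since $\pr_\gamma$ is an isomorphism, hence an epimorphism, one concludes $f = f{'}$.

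The main obstacle is injectivity on objects, which in a foundational setting like $\BST$ requires a little care. The cleanest route is: assume equality of functors $\Hom(\_ \times \_, \gamma) = \Hom(\_ \times \_, \gamma{'})$; then evaluation at $(\gamma, 1)$ gives $\Hom(\gamma \times 1, \gamma) = \Hom(\gamma \times 1, \gamma{'})$ as sets, and since the morphism $\pr_\gamma$ belongs to both sides and the codomain of any morphism in $\C C$ is uniquely determined, we must have $\gamma = \gamma{'}$. All other steps are routine.
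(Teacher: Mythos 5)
Your argument is correct. Note that the paper states Proposition~\ref{prp: sigmafunctor} without any proof at all, so there is nothing to compare against line by line; your write-up simply supplies the omitted verification. All three parts check out: naturality of $\eta^f$ and functoriality are indeed just associativity and the unit laws; for faithfulness, cancelling $f \circ \pr_\gamma = f{'} \circ \pr_\gamma$ against the epimorphism (in fact isomorphism) $\pr_\gamma \colon \gamma \times 1 \to \gamma$ is exactly the device the paper itself uses in Propositions~\ref{prp: subrepr} and~\ref{prp: predCrepr}; and for injectivity on objects, extracting $\gamma = \gamma{'}$ from the equality of the inhabited hom-sets $\Hom(\gamma \times 1, \gamma) = \Hom(\gamma \times 1, \gamma{'})$ via the determinacy of codomains is consistent with how the paper handles equality of structured objects elsewhere (e.g.\ in Theorem~\ref{prp: extfunctor}). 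The only point worth flagging is that this last step leans on the convention that a morphism carries its codomain as data; in the paper's $\BST$-flavoured setting this is the intended reading, but it deserves the one sentence you gave it.
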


%
%
%
%
%
%
%
%
%
%
%
%
%
%
%
%
%
%
%
%
%
%
%
%
%
%
%

\vspace{8mm}

\noindent
\textbf{Acknowledgments}\\[1mm]
Our research was supported by LMUexcellent, funded by the Federal
Ministry of Education and Research (BMBF) and the Free State of Bavaria under the
Excellence Strategy of the Federal Government and the L\"ander.

\end{document}